\newcommand{\C}{\mathbb{C}}
\newcommand{\D}{\mathbb{D}}
\newcommand{\DD}{\widehat{\mathcal{D}}}
\newcommand{\EE}{\mathcal{E}}
\newcommand{\T}{\mathbb{T}}
\renewcommand{\H}{\mathcal{H}}
\newcommand{\CC}{\mathcal{C}}
\newcommand{\Z}{\mathbb{Z}}
\newcommand{\vp}{\vp}
\newcommand{\NN}{\mathcal{N}}
\newcommand{\N}{\mathbb{N}}
\def\a{\alpha}       \def\b{\beta}        \def\g{\gamma}
\def\d{\delta}           \def\e{\varepsilon} 
     \def\om{\omega}      
       \def\t{\theta}       
         \def\r{\rho}         \def\z{\zeta}
                  \def\vp{\varphi}
\def\muov{\mu^{\om}_v}
\def\C{{\mathbb C}}
\newtheorem{theorem}{Theorem}
\newtheorem{lemma}[theorem]{Lemma}
\newtheorem{lettertheorem}{Theorem}
\newtheorem{letterlemma}[lettertheorem]{Lemma}
\numberwithin{equation}{section}
\begin{document}

\title[Embedding Bergman spaces into tent spaces]{Embedding Bergman spaces into tent spaces}

\keywords{Tent space, Carleson measure, area operator, integral operator, Bergman space, Hardy space}

\thanks{This research was supported in part by the Ram\'on y Cajal program
of MICINN (Spain); by Ministerio de Edu\-ca\-ci\'on y Ciencia, Spain, projects
MTM2011-25502 and MTM2011-26538;  by   La Junta de Andaluc{\'i}a, (FQM210) and
(P09-FQM-4468);  by Academy of Finland project no. 268009,  by V\"ais\"al\"a Foundation of Finnish Academy of Science and Letters, and by Faculty of Science and Forestry of University of Eastern Finland project no. 930349.
}

\date{\today}

%%%%%%%%%%%%%%%%%%%%%%%%%%%%%
%%%% ----  ABSTRACT ---- %%%%
%%%%%%%%%%%%%%%%%%%%%%%%%%%%%

\author{Jos\'e \'Angel Pel\'aez}
\address{Departamento de An\'alisis Matem\'atico, Universidad de M\'alaga, Campus de
Teatinos, 29071 M\'alaga, Spain} \email{japelaez@uma.es}

\author{Jouni R\"atty\"a}
\address{University of Eastern Finland, P.O.~Box 111, 80101 Joensuu, Finland}
\email{jouni.rattya@uef.fi}

\author{Kian Sierra}
\address{University of Eastern Finland, P.O.~Box 111, 80101 Joensuu, Finland; Departamento de An\'alisis Matem\'atico, Universidad de M\'alaga, Campus de
Teatinos, 29071 M\'alaga, Spain}
\email{kiansierra@hotmail.com}

\maketitle

\begin{abstract}
Let $A^p_\omega$ denote the Bergman space in the unit disc $\D$ of the complex plane induced by a radial weight~$\omega$
with the doubling property $\int_{r}^1\omega(s)\,ds\le C\int_{\frac{1+r}{2}}^1\omega(s)\,ds$.
The tent space $T^q_s(\nu,\omega)$ consists of functions such that
    \begin{equation*}
    \begin{split}
    \|f\|_{T^q_s(\nu,\omega)}^q
    =\int_\D\left(\int_{\Gamma(\zeta)}|f(z)|^s\,d\nu(z)\right)^\frac{q}s\omega(\zeta)\,dA(\zeta)
    <\infty,\quad 0<q,s<\infty.
    \end{split}
    \end{equation*}
Here $\Gamma(\zeta)$ is
a non-tangential approach region with vertex $\zeta$ in the punctured unit disc $\D\setminus\{0\}$.
We characterize the  positive Borel measures $\nu$  such that $A^p_\omega$ is embedded into the tent space $T^q_s(\nu,\omega)$, where $1+\frac{s}{p}-\frac{s}{q}>0$, by considering a generalized area operator. The results are provided in terms of Carleson measures for~$A^p_\omega$
\end{abstract}

\section{Introduction and main results}

The theory of tent spaces introduced by Coifman, Meyer and Stein~\cite{CMS}, and further studied by Cohn and Verbitsky~\cite{CohnVerbitsky} among others, shows the importance of maximal and square area functions and other objects from harmonic analysis~\cite{FefSt} in the study of Hardy spaces in the unit disc $\D=\{z:|z|<1\}$~\cite{DurenHp,Garnett1981}. The recent studies~\cite{PelRat,JAJRTent} show that tent spaces have natural analogues for Bergman spaces, and
they may play a role in the theory of weighted Bergman spaces similar to that of the original tent spaces in the Hardy space case.
The tent space $T^q_s(\nu,\om)$ consists of $\nu$-equivalence classes of
$\nu$-measurable functions $f:\D\to\C$ such that
    \begin{equation*}
    \begin{split}
    \|f\|_{T^q_s(\nu,\om)}^q&=\|A_{s,\nu}(f)\|_{L^q_\om}^q
    =\int_\D\left(\int_{\Gamma(\z)}|f(z)|^s\,d\nu(z)\right)^\frac{q}s\om(\z)\,dA(\z)
    <\infty,\quad0<q,s<\infty.
    \end{split}
    \end{equation*}
Here $\nu$ is assumed to be a positive Borel measure on $\D$, finite on compact sets, and $\om\in\DD$, that is, $\om$ is radial and $\widehat{\om}(r)=\int_r^1\om(s)\,ds$ has the doubling property $\sup_{0\le r<1}\widehat{\om}(r)/\widehat{\om}(\frac{1+r}{2})<\infty$.
Moreover,
    $$
    \Gamma(z)=\left \{  \xi \in \D : |\theta- \arg(\xi)|<\frac{1}{2}\left(1-\frac{|\xi|}{r}\right )\right  \}, \quad z=re^{i\theta}\in\overline{\D}\setminus\{0\},
    $$
are non-tangential approach regions with vertexes inside the disc, and the related tents are defined by $T(\z)=\left\{z\in\D:\z\in\Gamma(z)\right\}$ for all $\z\in\D\setminus\{0\}$. We also set $\om(T(0))=\lim_{r\to0^+}\om(T(r))$ to deal with the origin.

The purpose of this paper is three fold. First, we are interested in the question of when the weighted Bergman space $A^p_\om$, consisting of analytic functions in the unit disc $\D$ such that
    $$
    \|f\|_{A^p_\om}^p=\int_\D|f(z)|^p\om(z)\,dA(z)<\infty,\quad 0<p<\infty,
    $$
is continuously or compactly embedded into the tent space $T^q_s(\nu,\om)$.
Analogous problems for Hardy and Hardy-Sobolev spaces have been considered in~\cite{CascanteOrtega2003,Cohn,GonLouWu2010}. %,Xiao07}.
It turns out that the containment $A^p_\om\subset T^q_s(\nu,\om)$
is naturally described in terms of Carleson measures for $A^p_\om$. For $0<p,q<\infty$, a positive Borel measure $\mu$ on $\D$ is a $q$-Carleson measure for $A^p_\omega$ if there exists a constant $C>0$ such that $\|f\|_{L^q_\mu}\le C \| f \|_{A^p_\omega}$ for all $f\in A^p_\om$.

\begin{theorem}\label{Theorem1-intro}
Let $0<p,q,s<\infty$ such that $1+\frac{s}{p}-\frac{s}{q}>0$, $\om\in\DD$ and $\nu$ a positive Borel measure on $\D$, finite on compact sets, such that $\nu(\{0\})=0$. Write $\nu_\om(\z)=\om(T(\z))\,d\nu(\z)$ for all $\z\in\D$. Then the following assertions hold:
    \begin{itemize}
    \item[\rm(i)] $I_d:A^p_\omega\to T^q_s(\nu,\omega)$ is bounded if and only if $\nu_\om$ is a $(p+s-\frac{ps}{q})$-Carleson measure for~$A^p_\om$. Moreover,
    $$
    \|I_d\|^s_{A^p_\omega\to T^q_s(\nu,\omega)}\asymp\|I_d\|^{p+s-\frac{ps}{q}}_{A^p_\omega\to L^{p+s-\frac{ps}{q}}_{\nu_\om}}.
    $$
    \item[\rm(ii)] $I_d:A^p_\omega\to T^q_s(\nu,\omega)$ is compact if and only if $I_d:A^p_\omega\to L^{p+s-\frac{ps}{q}}_{\nu_\om}$ is compact.
    \end{itemize}
\end{theorem}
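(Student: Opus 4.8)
\noindent The plan is to prove that $I_d\colon A^p_\om\to T^q_s(\nu,\om)$ is bounded (respectively, compact) if and only if $I_d\colon A^p_\om\to L^t_{\nu_\om}$ is bounded (respectively, compact), where $t=p+s-\frac{ps}{q}>0$, together with the quantitative comparison $\|I_d\|^s_{A^p_\om\to T^q_s(\nu,\om)}\asymp\|I_d\|^t_{A^p_\om\to L^t_{\nu_\om}}$; the theorem then follows from the known description of $t$-Carleson measures for $A^p_\om$ with $\om\in\DD$ (a Carleson-box condition when $t\ge p$, an integrability condition on a maximal function of $\nu_\om$ when $t<p$). It is convenient to note first that, by Fubini's theorem and $\om(T(z))=\int_{\{\z\,:\,z\in\Gamma(\z)\}}\om(\z)\,dA(\z)$, one has $\int_\D|f|^t\,d\nu_\om=\int_\D\bigl(\int_{\Gamma(\z)}|f(z)|^t\,d\nu(z)\bigr)\om(\z)\,dA(\z)$, so that $L^t_{\nu_\om}$ is, up to a constant, the tent space $T^t_t(\nu,\om)$. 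Thus the assertion is that the containment $A^p_\om\subset T^q_s(\nu,\om)$ depends on $(q,s)$ only through $t$; the case $q=s$ (so $t=s$) is the trivial instance, being an identity after Fubini.

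The tools are the standard ones for weights in $\DD$: the sub-mean value property of $|f|^s$ over pseudohyperbolic discs $\Delta(z)$, combined with the bounded overlap of the cones $\Gamma(\z)$, which lets one replace $|f(z)|^s$ inside the area integral by its average over $\Delta(z)$ without altering $\|f\|_{T^q_s(\nu,\om)}$ up to constants; Fubini's theorem interchanging the cones $\Gamma(\z)$ with the tents $T(z)$, together with the comparisons $\om(T(z))\asymp\widehat{\om}(z)(1-|z|)\asymp\om(S(z))$ for the Carleson square $S(z)$ and the monotonicity $\om(T(\z))\lesssim\om(T(z))$ whenever $\z\in T(z)$; the pointwise growth estimate $|f(z)|\lesssim\|f\|_{A^p_\om}\,\om(T(z))^{-1/p}$ valid for every $f\in A^p_\om$; and the standard test functions $F_a\in A^p_\om$ with $\|F_a\|_{A^p_\om}\asymp1$ and $|F_a|\asymp\om(S(a))^{-1/p}$ on $S(a)$, which exist because $\om\in\DD$.

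For the necessity I would pass to a discrete model: fixing a separated pseudohyperbolic lattice $\{z_k\}$ with associated Whitney cells $\{R_k\}$, the sub-mean value property identifies $\|f\|^q_{T^q_s(\nu,\om)}$ and $\|f\|^p_{A^p_\om}$, up to constants, with $\sum_j\om(R_j)\bigl(\sum_{z_k\in\Gamma(z_j)}|f(z_k)|^s\nu(R_k)\bigr)^{q/s}$ and $\sum_k|f(z_k)|^p\om(R_k)$. Testing the embedding on a single normalised bump $f=F_{z_m}$ and using $\sum_{z_j\in T(z_m)}\om(R_j)\asymp\om(T(z_m))\asymp\om(R_m)$ yields $\nu_\om(R_m)\lesssim\om(R_m)^{t/p}$ for every cell; since $x\mapsto x^{t/p}$ is superadditive when $t\ge p$, summing over the cells inside a Carleson square gives the full box condition, hence the $t$-Carleson condition in that range. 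To recover the full condition when $t<p$ one replaces $F_{z_m}$ by a random combination $\sum_k\varepsilon_kc_kF_{z_k}$ over the lattice and appeals to Khinchine's inequality, in the spirit of Luecking's method, obtaining that $\nu_\om$ is a $t$-Carleson measure for $A^p_\om$ together with the lower bound for the operator norm.

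For the sufficiency one must establish the converse estimate, and this is the main obstacle. The naive route --- bounding $|f|$ on a whole cone by its supremum and then applying Minkowski's inequality --- is too lossy: it produces the divergent integral $\int_\D\om(\z)/\om(T(\z))\,dA(\z)$ and effectively replaces the $t$-Carleson hypothesis on $\nu_\om$ by the strictly stronger requirement that $\om(T(\cdot))^{s/q}\nu$ be an $s$-Carleson measure for $A^p_\om$. Instead I would work on the discrete model and prove the sharp weighted inequality $\sum_j\om(R_j)\bigl(\sum_{z_k\in\Gamma(z_j)}\lambda_k^s\nu(R_k)\bigr)^{q/s}\lesssim\bigl(\sum_k\lambda_k^p\om(R_k)\bigr)^{q/p}$ for all $\lambda_k\ge0$ --- a weighted discrete Hardy-type inequality along the Bergman tree --- treating the cases $q\ge s$ and $q<s$ separately, distributing the cell weights $\om(R_j)$ so that, after Hölder's inequality with the exponents dictated by the identity $\frac{t-s}{p}=1-\frac sq$, the condition $\nu_\om(R_k)\lesssim\om(R_k)^{t/p}$ (or, when $t<p$, its maximal-function formulation) is used exactly once while the sum over the tree telescopes geometrically; the range $t<p$, where no single Carleson-box inequality is available, is the delicate one. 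Finally, part (ii) follows the same scheme together with the usual reformulation: $I_d\colon A^p_\om\to T^q_s(\nu,\om)$ is compact if and only if $\|f_n\|_{T^q_s(\nu,\om)}\to0$ whenever $\|f_n\|_{A^p_\om}\le1$ and $f_n\to0$ uniformly on compact subsets of $\D$, and running the sufficiency and necessity estimates with $\nu$ restricted to $\{|z|>r\}$ and letting $r\to1^-$ shows this holds precisely when $I_d\colon A^p_\om\to L^t_{\nu_\om}$ is compact.
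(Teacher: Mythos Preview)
Your outline is a genuinely different route from the paper's, and the main divergence is in the sufficiency direction. The paper never passes to a discrete Bergman-tree inequality for $T^q_s(\nu,\om)$. Instead it exploits two maximal functions. For $p\le q$ and $q>s$, sufficiency is obtained by duality: writing $\|G^v_{\mu,s}(f)\|^s_{L^q_\om}=\sup_{\|h\|_{L^{(q/s)'}_\om}\le1}\int_\D|h|\,(G^v_{\mu,s}(f))^s\,\om\,dA$, applying Fubini to replace the cone by a tent, and then bounding $\frac{1}{\om(T(\z))}\int_{T(\z)}|h|\,\om\,dA$ by the H\"ormander-type maximal function $M_\om(h)$, whose $L^{(q/s)'}_\om\to L^{(t/s)'}_{\nu_\om}$ boundedness follows from the Carleson hypothesis via the paper's Theorem~A. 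For $q<s$ (and for all $q<p$) the paper instead uses the \emph{non-tangential maximal function} $N(f)(\z)=\sup_{z\in\Gamma(\z)}|f(z)|$: since $\|N(f)\|_{L^p_\om}\asymp\|f\|_{A^p_\om}$ for analytic $f$, one pulls out a power of $|f|$ from the cone integral as $N(f)^{x}$ for a suitable $x$, applies H\"older, and is left with either $\|f\|_{L^t_{\nu_\om}}$ directly or the quantity $B^\om_{\nu_\om}(\z)=\int_{\Gamma(\z)}d\nu_\om/\om(T)$, whose $L^{pq/(s(p-q))}_\om$-norm is identified with the Carleson norm through the paper's Theorem~\ref{Theorem:normscm}. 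The discrete Rademacher/Khinchine machinery you describe does appear in the paper, but only inside the proof of Theorem~\ref{Theorem:normscm} (the Carleson-measure characterization), which is then invoked as a black box.

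Your sufficiency sketch has a real gap: the ``sharp weighted discrete Hardy-type inequality along the Bergman tree'' you need,
\[
\sum_j\om(R_j)\Bigl(\sum_{z_k\in\Gamma(z_j)}\lambda_k^s\,\nu(R_k)\Bigr)^{q/s}\lesssim\Bigl(\sum_k\lambda_k^p\,\om(R_k)\Bigr)^{q/p},
\]
is asserted but not proved, and for general $\om\in\DD$ (which may vanish on annuli, so that individual cell weights $\om(R_k)$ need not be comparable to $\om(S(z_k))$) it is not a routine telescoping argument; in particular your claimed comparison $\om(T(z_m))\asymp\om(R_m)$ fails without extra regularity on $\om$. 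The paper avoids this entirely by staying in the continuous setting and using $N(f)$, which is precisely the tool that lets one ``extract $|f|$ from the cone'' without the loss you correctly identify in the naive approach. If you want to make your route work, you would effectively be reproving an Arcozzi--Rochberg--Sawyer type tree embedding theorem adapted to $\DD$-weights, which is substantially more work than the paper's argument. Your necessity sketch for $t\ge p$ is salvageable (the superadditivity step goes through once you replace $\om(R_m)^{t/p}$ by $\om(S(z_m))^{t/p}$ and use Lemma~\ref{le:1}(ii) to sum over dyadic levels), and your compactness strategy via truncation $\{|z|>r\}$ matches the paper's.
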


The requirement $\nu(\{0\})=0$, which does not carry any real restriction, is a technical hypotheses caused by the geometry of the tents $\Gamma(z)$.

Theorem~\ref{Theorem1-intro} can be interpret as a characterization of Carleson measures. This is the second aim of our study and becomes more apparent when an operator is extracted from Theorem~\ref{Theorem1-intro}. For $0<s<\infty$, the generalized area operator induced by positive measures $\mu$ and $v$  on $\D$ %, finite on compact sets,
is defined by
    $$
    G^v_{\mu,s}(f)(z)=\left(\int_{\Gamma(z)}|f(\z)|^s\frac{d\mu(\z)}{v(T(\z))}\right)^\frac1s,\quad z\in\D\setminus\{0\}.
    $$
Minkowski's inequality shows that $G_{\mu,s}^v$ is sublinear if $s\ge1$. This not the case for $0<s<1$, but instead we have
    $
    \left(G_{\mu,s}^v(f+g)\right)^s\le\left(G_{\mu,s}^v(f)\right)^s+\left(G_{\mu,s}^v(g)\right)^s
    $.
Anyway, we say that $G_{\mu,s}^v:A^p_\omega\to L^q_\omega$ is bounded if there exists $C>0$ such that $\|G_{\mu,s}^v(f)\|_{L^q_\omega}\le C\|f\|_{A^p_\omega}$ for all $f\in A^p_\omega$. Write $\muov$ for the positive measure such that
    $$
    d\muov(z)=\frac{\om(T(z))}{v(T(z))}\,d\mu(z)
    $$
for $\mu$-almost every $z\in\D$. Fubini's theorem shows that
    \begin{equation}\label{Eq:case-q=s}
    \begin{split}
    \|G_{\mu,s}^v (f)\|^s_{L^{s}_\omega}
    &=\int_\D\left(\int_{\Gamma(z)}|f(\z)|^s\frac{d\mu (\z)}{v(T(\z))}\right)\omega(z)\,dA(z)\\
    &=\int_\D\left(\int_{\Gamma(z)}|f(\z)|^s\frac{d\muov (\z)}{\om(T(\z))}\right)\omega(z)\,dA(z)\\
    &=\int_{\D\setminus\{0\}}|f(\z)|^s\left(\frac{1}{\omega(T(\z))} \int_{T(\z)}\omega(z)\,dA(z)\right)d\muov(\z)\\
    &=\int_{\D\setminus\{0\}}|f(\z)|^s\,d\muov(\z)
    =\|f\|^s_{L^{s}_{\muov}}-|f(0)|^s\muov(\{0\}),
    \end{split}
    \end{equation}
and hence $G_{\mu,s}^v:A^p_\omega\to L^s_\omega$ is bounded if and only if $\muov$ is an $s$-Carleson measure for $A^p_\omega$.
For any $s>0$, we say that $G_{\mu,s}^v: A^p_\om\to L^q_\mu$ is compact if for
every bounded sequence $\{f_n\}$ in $A^p_\om$ there exists a subsequence
$\{f_{n_k}\}$ such that $G_{\mu,s}^v(f_{n_k})$ converges in $L^q(\mu)$.

The next theorem gives a characterization of Carleson measures for Bergman spaces by using the generalized area operator $G_{\mu,s}^v$. Theorem~\ref{Theorem1-intro} is a special case of this result, see also Theorem~\ref{Theorem:main-s} in Section~\ref{Sec:boundedoperators}.

\begin{theorem}\label{Theorem2-intro}
Let $0<p,q,s<\infty$ such that $s>q-p$, $\omega\in\DD$ and let $\mu,v$ be positive Borel measures on $\D$ %, finite on compact sets,
such that $\mu\left(\{z\in\D:v(T(z))=0\}\right)=0=\mu(\{0\})$. Then the following assertions hold:
    \begin{itemize}
    \item[\rm(i)]$\muov$ is a $q$-Carleson measure for~$A^p_\omega$ if and only if $G_{\mu,s}^v:A^p_\omega\to L^\frac{ps}{p+s-q}_\omega$ is bounded. Moreover,
    $$
    \|G_{\mu,s}^v\|^s_{A^p_\om\to L^\frac{ps}{p+s-q}_\om}\asymp\|I_d\|^q_{A^p_\omega\to L^q_{\muov}}.
    $$
    \item[\rm(ii)] $I_d:A^p_\omega\to L^q_{\muov}$ is compact if and only if $G_{\mu,s}^v:A^p_\omega\to L^\frac{ps}{p+s-q}_\omega$ is compact.
    \end{itemize}
\end{theorem}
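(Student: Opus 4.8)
The plan is to deduce Theorem~\ref{Theorem2-intro} from Theorem~\ref{Theorem1-intro} by a change of variables that identifies the generalized area operator $G^v_{\mu,s}$ with an embedding into a tent space. Given $\mu$ and $v$ as in the statement, set $q_0=p+s-q$ so that the exponent of the target Lebesgue space is $\frac{ps}{p+s-q}=\frac{ps}{q_0}$; note that the hypothesis $s>q-p$ is exactly $q_0>0$, and one checks $1+\frac{s}{p}-\frac{s}{q_0}>0$ is equivalent to the natural positivity condition needed to invoke Theorem~\ref{Theorem1-intro}. The key observation is the identity, valid for any analytic $f$,
    \begin{equation*}
    \|G^v_{\mu,s}(f)\|^s_{L^{\frac{ps}{q_0}}_\om}
    =\int_\D\left(\int_{\Gamma(z)}|f(\z)|^s\,\frac{d\mu(\z)}{v(T(\z))}\right)^{\frac{p}{q_0}}\om(z)\,dA(z)
    =\|f\|^{s}_{T^{\frac{ps}{q_0}}_s(\nu,\om)},
    \end{equation*}
where $\nu$ is the positive Borel measure defined by $d\nu(\z)=\frac{d\mu(\z)}{v(T(\z))}$, which is well defined and satisfies $\nu(\{0\})=0$ precisely because of the hypothesis $\mu(\{z:v(T(z))=0\})=0=\mu(\{0\})$; it is finite on compact sets since $v(T(\z))$ is bounded below on compact subsets of $\D\setminus\{0\}$. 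Thus $G^v_{\mu,s}:A^p_\om\to L^{\frac{ps}{q_0}}_\om$ is bounded (respectively compact) if and only if $I_d:A^p_\om\to T^{\frac{ps}{q_0}}_s(\nu,\om)$ is bounded (respectively compact).

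Now I apply Theorem~\ref{Theorem1-intro} with the role of $q$ played by $\frac{ps}{q_0}$. Part~(i) of that theorem says this embedding is bounded if and only if $\nu_\om$, defined by $d\nu_\om(\z)=\om(T(\z))\,d\nu(\z)=\frac{\om(T(\z))}{v(T(\z))}\,d\mu(\z)=d\muov(\z)$, is a $\bigl(p+s-\frac{ps}{ps/q_0}\bigr)$-Carleson measure for $A^p_\om$. A direct computation gives $p+s-\frac{ps}{ps/q_0}=p+s-q_0=q$, so the Carleson condition is exactly that $\muov$ is a $q$-Carleson measure for $A^p_\om$, which is the assertion in Theorem~\ref{Theorem2-intro}(i). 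The norm equivalence follows by tracing the exponents through the equivalence in Theorem~\ref{Theorem1-intro}(i): that result gives $\|I_d\|^s_{A^p_\om\to T^{ps/q_0}_s(\nu,\om)}\asymp\|I_d\|^{q}_{A^p_\om\to L^{q}_{\nu_\om}}$, and combining with the displayed identity above yields $\|G^v_{\mu,s}\|^s_{A^p_\om\to L^{ps/q_0}_\om}\asymp\|I_d\|^q_{A^p_\om\to L^q_{\muov}}$. Part~(ii) is immediate from Theorem~\ref{Theorem1-intro}(ii) together with the fact that compactness of $G^v_{\mu,s}$ into $L^{ps/q_0}_\om$ is, by the identity above and the definition of compactness via convergent subsequences, the same as compactness of $I_d$ into $T^{ps/q_0}_s(\nu,\om)$.

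The only genuinely non-routine point is verifying that the measure $\nu$ constructed from $\mu$ and $v$ meets \emph{all} the hypotheses of Theorem~\ref{Theorem1-intro} — in particular that it is finite on compact sets and charges no mass at the origin — and that the bookkeeping $p+s-\frac{ps}{(ps/q_0)}=q$ together with $1+\frac s p-\frac s{(ps/q_0)}>0$ is consistent with $s>q-p$; all of this is elementary algebra once one records it carefully. I would also remark, as the excerpt already does via \eqref{Eq:case-q=s}, that the case $q=s$ (equivalently $q_0=p$, target exponent $s$) recovers directly the Fubini computation, providing a consistency check. Finally, deducing Theorem~\ref{Theorem1-intro} itself is the substantive analytic work and is handled separately (and is the reverse implication of the same correspondence, taking $v=\om$, $\mu=\nu_\om$, whence $\muov=\nu_\om$ and the target exponent $\frac{ps}{p+s-q}$ with $q$ there equal to $p+s-\frac{ps}{q_{\text{orig}}}$ returns $q_{\text{orig}}$).
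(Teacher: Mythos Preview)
Your reduction is correct: setting $d\nu=\frac{d\mu}{v(T(\cdot))}$ gives $\|G^v_{\mu,s}(f)\|_{L^{ps/q_0}_\omega}=\|f\|_{T^{ps/q_0}_s(\nu,\omega)}$ and $\nu_\omega=\muov$, so Theorem~\ref{Theorem2-intro} and Theorem~\ref{Theorem1-intro} are two parametrizations of the same statement. The paper makes exactly this observation, but in the opposite direction --- it declares Theorem~\ref{Theorem1-intro} a special case of Theorem~\ref{Theorem2-intro} and then proves Theorem~\ref{Theorem2-intro} directly via the equivalent Theorem~\ref{Theorem:main-s}. That direct proof (Theorems~\ref{th:casepleq} and~\ref{fukifuki} for boundedness, Theorem~\ref{th:c1} and the following theorem for compactness) is the substantive analytic work: it splits into the cases $q\ge p$ and $q<p$, and within each into $q>s$, $q=s$, $q<s$, combining test functions $F_{a,p}$, duality, the maximal bound of Theorem~\ref{co:maxbou}, the non-tangential maximal estimate $\|N(f)\|_{L^p_\omega}\asymp\|f\|_{A^p_\omega}$, and the norm equivalences in Theorem~\ref{Theorem:normscm}. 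Your write-up contains none of this; it is a correct equivalence but, within the paper's logic, a circular one --- which you yourself note in the last sentence.

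Two smaller points. First, your displayed identity has an exponent slip: the integral you write equals $\|G^v_{\mu,s}(f)\|^{ps/q_0}_{L^{ps/q_0}_\omega}$, not the $s$th power (the conclusion $\|G^v_{\mu,s}(f)\|_{L^{ps/q_0}_\omega}=\|f\|_{T^{ps/q_0}_s(\nu,\omega)}$ is nonetheless correct). Second, the claim that $v(T(\zeta))$ is bounded below on compact subsets of $\D\setminus\{0\}$ is false for general positive Borel $v$ (take $v$ a point mass, or supported on a single radius), so your justification that $\nu$ is finite on compact sets does not stand. This is why the paper formulates the working result as Theorem~\ref{Theorem:main-s} --- with hypotheses on $\mu,v$ directly and no local-finiteness requirement --- rather than routing through Theorem~\ref{Theorem1-intro}.
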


The third motivation of this study comes from the equivalent $A^p_\om$-norm involving square functions, given by
    \begin{equation*}
    \begin{split}
    \|f\|_{A^p_\omega}^p\asymp\int_\D\,\left(\int_{\Gamma(\z)}|D^\alpha f(z)|^2
    \left(1-\left|\frac{z}{\z}\right|\right)^{2\alpha-2}\,dA(z)\right)^{\frac{p}2}\omega(\z)\,dA(\z)+\sum_{j=0}^{[\alpha]-1}|f^{(j)}(0)|^p.
    \end{split}
    \end{equation*}
Here $0<\alpha,p<\infty$, $\om$ is a radial weight, $D^\alpha f$ denotes the fractional derivative of order $\alpha$ and $[\alpha]$ is the integer such that $[\alpha]\le \alpha<[\alpha]+1$~\cite[Theorem $4.2$]{PelRat}.
This comparability shows that the operator
    $$
    F_\alpha(f)(\z)=\left(\int_{\Gamma(\z)}|D^\alpha f(z)|^2
    \left(1-\left|\frac{z}{\z}\right|\right)^{2\alpha-2}\,dA(z)\right)^{\frac{1}2},\quad
    \z\in\D\setminus\{0\},
    $$
is bounded from $A^p_\om$ to $L^p_\om$ for each $\alpha>0$. This is no longer true when $\alpha=0$, and therefore the definition of $G_{\mu,s}^v$ is also motivated by the study of this limit case. This was the starting point in the study by Cohn on the area operator $G_{\mu}(f)(z)=\int_{\Gamma(z)}|f(\z)|\frac{d\mu (\z)}{1-|\z|}$, defined for $z$ on the boundary $\T$ of $\D$, acting from the Hardy space $H^p$ to $L^p(\T)$~\cite[Theorem~1]{Cohn}. The approach by Cohn relies on ideas by John and Nirenberg~\cite[Theorem~2.1]{Garnett1981} and Calderon-Zygmund decompositions. More recently, similar ideas together the classical factorization of Hardy spaces $H^p=H^{p_1}\cdot H^{p_2}$, $p^{-1}=p_1^{-1}+ p_2^{-1}$, were used in \cite{GonLouWu2010} to study the case $G_{\mu}:H^p\to L^q(\T)$. We do not employ these techniques in the proof of Theorem~\ref{Theorem2-intro} but instead we use a description
of the boundedness of a weighted maximal function of H\"ormander-type. To give the precise statement we need to introduce some notation. The Carleson square $S(I)$ based on an interval $I\subset\T$ is the set $S(I)=\{re^{it}\in\D:e^{it}\in I,\,
1-|I|\le r<1\}$, where $|E|$ denotes the Lebesgue measure of $E\subset\T$. We associate to each $a\in\D\setminus\{0\}$ the interval
$I_a=\{e^{i\t}:|\arg(a e^{-i\t})|\le\frac{1-|a|}{2}\}$, and denote $S(a)=S(I_a)$. For a positive Borel measure $\mu$ on~$\D$ and $\a>0$, define the weighted maximal function
    $$
    M_{\om,\alpha}(\mu)(z)=\sup_{z\in S(a)}\frac{\mu(S(a))}{\left(\om\left(S(a)
    \right)\right)^\a},\quad
    z\in\D.
    $$
In the case $\a=1$ simply write $M_{\om}(\mu)$, and if $\mu$ is of the form $\vp\om\,dA$, then $M_{\om,\a}(\mu)$ is the weighted maximal function  $M_{\om,\a}(\vp)$ of $\vp$. The following result is \cite[Theorem~3]{JAJRTent}.

\begin{lettertheorem}\label{co:maxbou}
Let $0<p\le q<\infty$ and $0<\gamma<\infty$ such that $p\gamma>1$.
Let $\om\in\DD$ and $\mu$ be a positive Borel measure on $\D$. Then
$[M_{\om}((\cdot)^{\frac{1}{\gamma}})]^{\gamma}:L^p_\omega\to
L^q(\mu)$ is bounded if and only if $M_{\om,q/p}(\mu)\in L^\infty$.
Moreover,
    $$
    \|[M_{\om}((\cdot)^{\frac{1}{\gamma}})]^{\gamma}\|^q_{\left(L^p_\om,L^q(\mu)\right)}\asymp\|M_{\om,q/p}(\mu)\|_{L^\infty}.
    $$
\end{lettertheorem}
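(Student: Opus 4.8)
The plan is to reduce the statement to a Fefferman--Stein-type inequality for the weighted Hardy--Littlewood maximal function $M_\om$, and then prove that inequality by a Vitali covering argument combined with the doubling of $\om\in\DD$. For the reduction, given $f\in L^p_\om$ put $g=|f|^{1/\gamma}\ge0$, so that $g\in L^{p\gamma}_\om$ with $\|g\|_{L^{p\gamma}_\om}=\|f\|_{L^p_\om}^{1/\gamma}$ and $[M_\om((\cdot)^{1/\gamma})]^\gamma(f)=(M_\om g)^\gamma$. Writing $P=p\gamma>1$ and $Q=q\gamma\ge P$, the inequality $\|[M_\om((\cdot)^{1/\gamma})]^\gamma(f)\|_{L^q(\mu)}\lesssim\|f\|_{L^p_\om}$ is, after raising to the power $q$, equivalent to $\|M_\om g\|_{L^Q(\mu)}\lesssim\|g\|_{L^P_\om}$ for all $g\ge0$. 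On the other hand, $M_{\om,q/p}(\mu)\in L^\infty$ means exactly that $\mu(S(a))\le M\,\om(S(a))^{Q/P}$ for all $a\in\D\setminus\{0\}$, with $M=\|M_{\om,q/p}(\mu)\|_{L^\infty}$. Thus it suffices to show that $M_\om\colon L^P_\om\to L^Q(\mu)$ is bounded if and only if $\mu(S(a))\lesssim\om(S(a))^{Q/P}$, with the constants comparable in the appropriate powers.

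For \emph{necessity}, test against $f=\chi_{S(a)}$. Since $\chi_{S(a)}^{1/\gamma}=\chi_{S(a)}$ and, taking the square $S(a)$ itself in the supremum, $M_\om(\chi_{S(a)})(z)\ge\frac{1}{\om(S(a))}\int_{S(a)}\om\,dA=1$ for every $z\in S(a)$, we get $\|[M_\om((\cdot)^{1/\gamma})]^\gamma(\chi_{S(a)})\|_{L^q(\mu)}^q\ge\mu(S(a))$, while $\|\chi_{S(a)}\|_{L^p_\om}^p=\om(S(a))$. Boundedness forces $\mu(S(a))\le\|[M_\om((\cdot)^{1/\gamma})]^\gamma\|^q_{(L^p_\om,L^q(\mu))}\,\om(S(a))^{q/p}$ for all $a$, and taking the supremum over $a$ gives $\|M_{\om,q/p}(\mu)\|_{L^\infty}\lesssim\|[M_\om((\cdot)^{1/\gamma})]^\gamma\|^q_{(L^p_\om,L^q(\mu))}$.

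For \emph{sufficiency}, assume $\mu(S(a))\le M\,\om(S(a))^{Q/P}$ and fix any $t\in(1,\infty)$; the first goal is the weak-type estimate $\mu(\{M_\om g>\lambda\})\lesssim M\,\lambda^{-tQ/P}\|g\|_{L^t_\om}^{tQ/P}$. Cover the level set $\Omega_\lambda=\{M_\om g>\lambda\}$ by Carleson squares $S(I)$ with $\frac1{\om(S(I))}\int_{S(I)}g\om\,dA>\lambda$, apply the Vitali covering lemma to the corresponding arcs in $\T$, and transfer the selection to the squares to obtain a pairwise disjoint subfamily $\{S(I_j)\}$ with $\Omega_\lambda\subset\bigcup_j S(5I_j)$. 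Using $\mu(S(5I_j))\lesssim M\,\om(S(5I_j))^{Q/P}$, the comparison $\om(S(5I_j))\asymp\om(S(I_j))$ (which is the content of $\om\in\DD$ at the level of Carleson squares, via $\om(S(I))\asymp|I|\ho(1-|I|)$ and the doubling of $\ho$), Hölder's inequality with exponent $t>1$ to get $\om(S(I_j))<\lambda^{-t}\int_{S(I_j)}g^t\om\,dA$, and finally $Q/P\ge1$ together with the disjointness of the $S(I_j)$, we arrive at
\[
\mu(\Omega_\lambda)\lesssim M\sum_j\om(S(I_j))^{Q/P}
\lesssim M\,\lambda^{-tQ/P}\Big(\sum_j\int_{S(I_j)}g^t\om\,dA\Big)^{Q/P}
\lesssim M\,\lambda^{-tQ/P}\|g\|_{L^t_\om}^{tQ/P},
\]
i.e. $M_\om\colon L^t_\om\to L^{tQ/P,\infty}(\mu)$ with quasinorm $\lesssim M^{P/(tQ)}$. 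Applying this with two exponents $1<t_1<P<t_2<\infty$ (so $t_iQ/P\ge t_i$ and the two target exponents differ) and interpolating by the Marcinkiewicz theorem along the line through $(P,Q)$, we obtain $\|M_\om\|_{L^P_\om\to L^Q(\mu)}\lesssim M^{1/Q}$, which through the reduction yields $\|[M_\om((\cdot)^{1/\gamma})]^\gamma\|^q_{(L^p_\om,L^q(\mu))}\lesssim M=\|M_{\om,q/p}(\mu)\|_{L^\infty}$, completing the equivalence of norms.

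The one genuinely technical point is the covering step: justifying that a Vitali selection for arcs transfers to the Carleson squares with only a bounded dilation, and that $\om(S(5I))\asymp\om(S(I))$ using solely $\om\in\DD$; this is precisely where the doubling hypothesis is used, and it must also be compatible with the squares $S(a)$ of length comparable to $1$ and with the convention $\om(T(0))=\lim_{r\to0^+}\om(T(r))$ at the origin. The remaining ingredients---the test-function lower bound, Hölder, and the Marcinkiewicz interpolation---are routine, and the hypothesis $p\gamma>1$ enters exactly to leave room for an exponent $t>1$ in the Hölder step.
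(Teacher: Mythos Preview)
The paper does not prove this statement; Theorem~A is quoted verbatim as \cite[Theorem~3]{JAJRTent} and is used only as a tool. So there is no ``paper's own proof'' to compare against.

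That said, your argument is the standard and correct one. The reduction to $M_\om:L^P_\om\to L^Q(\mu)$ with $P=p\gamma>1$, $Q=q\gamma$ is exactly right, the test with $\chi_{S(a)}$ gives the necessity and the lower bound for the norm, and the sufficiency via a Vitali $5r$-covering on arcs, the doubling estimate $\om(S(5I))\asymp\om(S(I))$ (which indeed follows from Lemma~\ref{le:1}(ii)), the superadditivity $\sum_j x_j^{Q/P}\le\big(\sum_j x_j\big)^{Q/P}$ for $Q/P\ge1$ on the disjoint $S(I_j)$, and Marcinkiewicz interpolation between two weak-type bounds is exactly how \cite{JAJRTent} proceeds. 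The only small points you should make explicit are: (a) disjoint arcs $I_j$ give disjoint squares $S(I_j)$ (angular disjointness), which you use implicitly when summing $\int_{S(I_j)}g^t\om\,dA$; (b) when $5|I_j|\ge1$ the dilated square is essentially the whole disc, and the doubling comparison is then trivial since $\om(\D)<\infty$; and (c) the off-diagonal Marcinkiewicz theorem requires $p_i\le q_i$, which here is $t_i\le t_i Q/P$ and holds precisely because $q\ge p$. None of these is a gap.
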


Another maximal operator we will face is defined by $N(f)(z)=\sup_{\z\in\Gamma(z)}|f(\z)|$. It is known that $N:A^p_\om\to L^p_\om$ is bounded for each radial weight $\om$ and $\|N(f)\|_{L^p_\om}\asymp\|f\|_{A^p_\om}$ by \cite[Lemma~4.4]{PelRat}.

Theorems~\ref{Theorem1-intro} and~\ref{Theorem2-intro} would perhaps not be of much significance if we did not understand the $q$-Carleson measures for $A^p_\om$ sufficiently well. In fact, another important ingredient in the proof of our main results is the following refinement of the characterizations of Carleson measures for Bergman spaces given in \cite[Theorem~1]{JAJRTent}. The estimates for the norm of the identity operator $I_d: A^p_\om\to L^q_\mu$ are of special importance for us.

\begin{theorem}\label{Theorem:normscm}
Let $0<p,q<\infty$, $\omega\in\DD$ and let $\mu$ be a positive Borel measure on $\D$.
Further, let $dh(z)=dA(z)/(1-|z|^2)^2$ denote the hyperbolic measure.
    \begin{itemize}
    \item[\rm(i)] If $p\le q$, then $\mu$ is a $q$-Carleson measure for $A^p_\om$ if and only if
    $\sup_{a\in\D}\frac{\mu(S(a))}{\omega(S(a))^{\frac{q}{p}}}<\infty$. Moreover,
    $$
    \|I_d\|^q_{A^p_\om\to L^q_\mu}\asymp \sup_{a\in\D}\frac{\mu(S(a))}{\omega(S(a))^{\frac{q}{p}}}.
    $$
    \item[\rm(ii)] If $p\le q$, then $I_d:A^p_\om\to L^q_\mu$ is compact if and only if
    \begin{equation}\label{c1}
    \lim_{|a|\to 1^-}\frac{\mu(S(a))}{\left(\om(S(a))\right)^{q/p}}=0.
    \end{equation}
    \item[\rm(iii)] If $q<p$, then the following conditions are equivalent:
\begin{enumerate}
\item[\rm(a)] $I_d:A^p_\om\to L^q_\mu$ is compact;
\item[\rm(b)] $I_d:A^p_\om\to L^q_\mu$ is bounded;
 \item[\rm(c)] The function
    $$
    B^\om_\mu(z)=\int_{\Gamma(z)}\frac{d\mu(\z)}{\om(T(\z))},\quad
    z\in\D\setminus\{0\},
    $$
belongs to $L^{\frac{p}{p-q}}_\om$;
\item[\rm(d)] For each fixed $r\in(0,1)$, the function
    $$
    \Phi_\mu^\om(z)=\Phi_{\mu,r}^\om(z)=\int_{\Gamma(z)}\frac{\mu\left(\Delta(\z,r)\right)}{\om(T(\z))}\,dh(\z),\quad z\in\D\setminus\{0\},
    $$
belongs to $L^{\frac{p}{p-q}}_\om$;
\item[\rm(e)] For each sufficiently large $\lambda=\lambda(\om)>1$, the function
    $$
    \Psi^\om_\mu(z)=\Psi^\om_{\mu,\lambda}(z)=\int_\D \left(\frac{1-|\z|}{|1-\overline{z}\z|}\right)^\lambda\frac{d\mu(\z)}{\om(T(\z))},\quad z\in\D,
    $$
belongs to $L^{\frac{p}{p-q}}_\om$.
\end{enumerate}
Moreover,
    $$
    \|I_d\|^q_{A^p_\om\to L^q_\mu}
    \asymp\|M_\om(\mu)\|_{L^{\frac{p}{p-q}}_\om}
    \asymp\|B^\om_\mu\|_{L^{\frac{p}{p-q}}_\om}+\mu(\{0\})
    \asymp\|\Psi^\om_\mu\|_{L^{\frac{p}{p-q}}_\om}
    \asymp\|\Phi^\om_\mu\|_{L^{\frac{p}{p-q}}_\om}+\mu(\{0\}).
    $$
    \end{itemize}
\end{theorem}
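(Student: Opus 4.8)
The plan is organized around the two regimes $p\le q$ and $q<p$; in both, the ``if and only if'' statements are close in spirit to \cite[Theorem~1]{JAJRTent}, so the real work is the quantitative comparisons of operator norms, obtained by revisiting those arguments while keeping track of constants.

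\textbf{Case $p\le q$ (parts (i)--(ii)).} For the lower bound in (i) I would test the inequality $\|f\|_{L^q_\mu}\le\|I_d\|\,\|f\|_{A^p_\omega}$ on the standard family of test functions for $\DD$-weights: for each $a\in\D$ there is $F_a\in A^p_\omega$ with $\|F_a\|_{A^p_\omega}\asymp1$, $|F_a(z)|\asymp\omega(S(a))^{-1/p}$ on $\Delta(a,r)$, and the decay $|F_a(z)|\lesssim\omega(S(a))^{-1/p}\bigl((1-|a|)/|1-\overline a z|\bigr)^{\lambda}$ for $\lambda$ as large as needed. Restricting $\int_\D|F_a|^q\,d\mu$ to $\Delta(a,r)\subset S(a)$ and using $\omega(S(b))\asymp\omega(S(a))$ for $b\in\Delta(a,r)$ gives $\mu(S(a))/\omega(S(a))^{q/p}\lesssim\|I_d\|^q$. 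For the upper bound I would use the weighted sub--mean value property for $\DD$-weights, $\sup_{z\in R_j}|f(z)|^p\lesssim\omega(R_j^\ast)^{-1}\int_{R_j^\ast}|f|^p\omega\,dA$, along a Whitney decomposition $\D=\bigcup_jR_j$ with dilates $R_j^\ast$ of bounded overlap; combining this with $\mu(R_j)\le\mu(S(a_j))\le C\,\omega(S(a_j))^{q/p}\asymp C\,\omega(R_j^\ast)^{q/p}$ and the elementary inequality $\sum_jx_j^{q/p}\le(\sum_jx_j)^{q/p}$ (valid since $q/p\ge1$) yields $\int_\D|f|^q\,d\mu\lesssim C\,\|f\|_{A^p_\omega}^q$, hence $\|I_d\|^q\lesssim C$. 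Part (ii) is then the routine truncation argument: writing $\mu=\mu|_{\overline{D(0,R)}}+\mu_R$, the first piece induces a compact embedding by Montel's theorem, while part (i) applied to $\mu_R$ together with \eqref{c1} shows $\|I_d\|_{A^p_\omega\to L^q_{\mu_R}}\to0$ as $R\to1^-$; conversely the $F_a$ tend to $0$ uniformly on compacta as $|a|\to1^-$, so compactness forces \eqref{c1}.

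\textbf{Case $q<p$ (part (iii)).} Here $(a)\Rightarrow(b)$ is trivial. For $(c)\Rightarrow(b)$ I would invoke the Fubini identity already recorded in \eqref{Eq:case-q=s}, applied with $s=q$ to the measure $d\mu/\omega(T(\cdot))$: since $0\notin\Gamma(z)$,
\[
\int_{\D\setminus\{0\}}|f(\z)|^q\,d\mu(\z)=\int_\D\Bigl(\int_{\Gamma(z)}|f(\z)|^q\,\tfrac{d\mu(\z)}{\omega(T(\z))}\Bigr)\omega(z)\,dA(z)\le\int_\D N(f)(z)^q\,B^\omega_\mu(z)\,\omega(z)\,dA(z),
\]
and Hölder's inequality with exponents $p/q$ and $p/(p-q)$ together with $\|N(f)\|_{L^p_\omega}\asymp\|f\|_{A^p_\omega}$ gives $\|I_d\|^q_{A^p_\omega\to L^q_\mu}\lesssim\|B^\omega_\mu\|_{L^{p/(p-q)}_\omega}+\mu(\{0\})$. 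The reverse estimate $(b)\Rightarrow$ ``$M_\omega(\mu)\in L^{p/(p-q)}_\omega$'' is the heart of the matter: fix a $\delta$-lattice $\{z_j\}$, let $\{\varepsilon_j\}$ be Rademacher variables, and for scalars $c_j\ge0$ apply the boundedness of $I_d$ to $F=\sum_j\varepsilon_jc_jF_{z_j}$. Taking $q$-th powers, averaging in $\varepsilon$, and using Khinchine's inequality on both sides, Jensen's inequality (legitimate since $q/p<1$) on the $A^p_\omega$ side, and the disjointness of $\{\Delta(z_j,\delta/4)\}$ on the $L^q_\mu$ side, one is led to
\[
\sum_j c_j^q\,\frac{\mu(\Delta(z_j,\delta/4))}{\omega(S(z_j))^{q/p}}\lesssim\|I_d\|^q_{A^p_\omega\to L^q_\mu}\Bigl(\sum_jc_j^p\Bigr)^{q/p},
\]
provided one also knows $\int_\D\bigl(\sum_jc_j^2|F_{z_j}|^2\bigr)^{p/2}\omega\,dA\lesssim\sum_jc_j^p$, a standard consequence of the decay of the $F_{z_j}$. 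Duality between $\ell^{p/q}$ and $\ell^{p/(p-q)}$ then converts this into $\sum_j\bigl(\mu(\Delta(z_j,\delta/4))/\omega(S(z_j))\bigr)^{p/(p-q)}\omega(S(z_j))\lesssim\|I_d\|^{pq/(p-q)}$, and comparing the discrete and continuous maximal functions finishes the implication with $\|M_\omega(\mu)\|_{L^{p/(p-q)}_\omega}\lesssim\|I_d\|^q$.

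To close the circle I would show that $M_\omega(\mu)$, $B^\omega_\mu$, $\Psi^\omega_\mu$ and $\Phi^\omega_\mu$ all have comparable $L^{p/(p-q)}_\omega$-norms, up to the additive term $\mu(\{0\})$ for $M_\omega(\mu)$ and $\Psi^\omega_\mu$, which do not vanish at the origin. This I would do by discretizing each of them over the lattice $\{z_j\}$ --- replacing $\int_{\Gamma(z)}(\cdots)\,dh(\z)$ and the Poisson-type kernel by sums $\sum_{z_j\in\Gamma'(z)}\mu(\Delta(z_j,\delta'))/\omega(S(z_j))$, using $\omega(T(\z))\asymp\omega(S(\z))$, the equivalence of apertures of the cones $\Gamma$, and the basic growth estimates for $\DD$-weights --- so that each of the four norms becomes comparable to the single quantity $\bigl(\sum_j(\mu(\Delta(z_j,\delta'))/\omega(S(z_j)))^{p/(p-q)}\omega(S(z_j))\bigr)^{(p-q)/p}$. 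Since $\Gamma(z)\subset\overline{D(0,|z|)}$, the truncated functions $B^\omega_{\mu_R}$ and $\Phi^\omega_{\mu_R}$ vanish identically for $|z|\le R$, so dominated convergence gives $\|B^\omega_{\mu_R}\|_{L^{p/(p-q)}_\omega}\to0$ as $R\to1^-$; combined with the compactness of the embedding induced by $\mu|_{\overline{D(0,R)}}$ (Montel again) this yields $(b)\Rightarrow(a)$ and completes the equivalences, as well as the final chain of norm comparisons. The delicate point in the whole argument is this last bookkeeping in the case $q<p$: carrying the Rademacher/Khinchine scheme through a quasi-Banach setting, justifying the auxiliary estimate $\int_\D(\sum_jc_j^2|F_{z_j}|^2)^{p/2}\omega\,dA\lesssim\sum_jc_j^p$, and --- above all --- verifying that the resulting discrete $\ell^{p/(p-q)}$-sum is genuinely equivalent to each of $\|M_\omega(\mu)\|_{L^{p/(p-q)}_\omega}$, $\|B^\omega_\mu\|_{L^{p/(p-q)}_\omega}$, $\|\Psi^\omega_\mu\|_{L^{p/(p-q)}_\omega}$ and $\|\Phi^\omega_\mu\|_{L^{p/(p-q)}_\omega}$; in the regime $p\le q$ the only real subtlety is producing test functions with the sharp normalization $|F_a|\asymp\omega(S(a))^{-1/p}$ on $\Delta(a,r)$ for an arbitrary weight in $\DD$.
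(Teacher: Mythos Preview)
Your treatment of parts (i)--(ii) is correct and matches the paper's approach: the paper simply cites \cite[Theorem~9, Lemma~8]{JAJRTent} and \cite[Theorem~3.3]{PelSum14} for the upper and lower bounds in (i), and \cite[Theorem~2.1(ii)]{PelRat} for (ii), but the underlying arguments are exactly the test-function/submean-value/truncation scheme you describe.

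For part (iii), your implication $(c)\Rightarrow(b)$ via Fubini, the pointwise bound by $N(f)$, and H\"older is exactly what the paper does. The genuine divergence --- and the gap in your proposal --- is in the step $(b)\Rightarrow\|M_\omega(\mu)\|_{L^{p/(p-q)}_\omega}\lesssim\|I_d\|^q$. Your Khinchine-plus-$\ell^{p/q}$-duality argument over a \emph{fixed} $\delta$-lattice correctly yields
\[
\sum_j\Bigl(\frac{\mu(\Delta(z_j,\delta/4))}{\omega(S(z_j))}\Bigr)^{p/(p-q)}\omega(S(z_j))\lesssim\|I_d\|^{pq/(p-q)}_{A^p_\omega\to L^q_\mu},
\]
but the sentence ``comparing the discrete and continuous maximal functions finishes the implication'' hides a real difficulty. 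That discrete sum is the $L^{p/(p-q)}_\omega$-norm of the \emph{local average} $z\mapsto\mu(\Delta(z,r))/\omega(S(z))$, which is pointwise dominated \emph{by} $M_\omega(\mu)$; the reverse bound in $L^{p/(p-q)}_\omega$ is not a pointwise comparison and does not follow from lattice bookkeeping. Likewise, your claim that $\|B^\omega_\mu\|_{L^{p/(p-q)}_\omega}$ and $\|\Phi^\omega_\mu\|_{L^{p/(p-q)}_\omega}$ discretize to that same sum is problematic: after discretizing the cone integral one gets $\int_\D\bigl(\sum_{z_j\in\Gamma'(z)}a_j\bigr)^{p/(p-q)}\omega(z)\,dA(z)$ with $a_j=\mu(\Delta(z_j))/\omega(S(z_j))$, i.e.\ a tent-space norm $\|\{a_j\}\|_{T^{p/(p-q)}_1}$, and for exponent $>1$ this is \emph{not} comparable to $\sum_ja_j^{p/(p-q)}\omega(S(z_j))$ in general.

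The paper circumvents both issues by a different route. First, it shows $\|B^\omega_\mu\|+\mu(\{0\})\asymp\|\Psi^\omega_\mu\|\asymp\|\Phi^\omega_\mu\|+\mu(\{0\})\lesssim\|M_\omega(\mu)\|$ using \cite[Lemma~4]{JAJRTent} and the tent-space equivalence \cite[Lemma~7]{JAJRTent} (this is precisely where the $T^s_1\leftrightarrow C_{1,h}$ comparison enters). Second --- and this is the heart of the matter --- it proves $\|M_\omega(\mu)\|_{L^{p/(p-q)}_\omega}\lesssim\|I_d\|^q$ by a \emph{dyadic} argument: it introduces two dyadic grids, shows $M_\omega(\mu)\asymp\widetilde M^d_\omega(\mu)$ pointwise, runs Khinchine not over a fixed lattice but over the vertices $\{z_{k,j}\}$ of the Calder\'on--Zygmund stopping-time tents $T^k_j$ (the maximal dyadic tents where $\mu(T)>2^k\omega(T)$), and then makes the specific choice $b_{k,j}^q=2^{k(r'-1)}$ with $r=p/q$, so that both sides of the resulting inequality can be computed exactly in terms of $\|\widetilde M^d_{\omega,l}(\mu)\|_{L^{p/(p-q)}_\omega}^{p/(p-q)}$. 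This adaptive choice of points and coefficients is what your fixed-lattice-plus-duality scheme is missing; without it the chain does not close at $M_\omega(\mu)$.
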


An analogue of the above result for Hardy spaces is essentially known. It can be obtained by using \cite[Section~7]{Lu90}
and \cite{PelPAMS14}. Going further, the implications of the techniques used in this paper can be employed to extend the known results on the area operator on Hardy spaces \cite{Cohn,GonLouWu2010}  as well as to the study of the integral operator $T_g(f)(z)=\int_0^zf(\z)g'(\z)\,d\z$ acting on Hardy and Bergman spaces. These results are briefly discussed in Section~\ref{Sec:comments}.

\section{Carleson measures}

In this section we prove Theorem~\ref{Theorem:normscm}. For this aim we need some preliminary results and definitions.
The following lemma provides useful characterizations of weights in $\DD$. For a proof, see \cite{PelSum14,PRAntequera}.

\begin{letterlemma}\label{le:1}
Let $\om$ be a radial weight. Then the following conditions are equivalent:
\begin{itemize}
\item[\rm(i)] $\om\in\DD$;
\item[\rm(ii)] There exist $C=C(\om)>0$ and $\b=\b(\om)>0$ such that
    \begin{equation}\label{Eq:replacement-Lemma1.1}
    \begin{split}
    \widehat{\om}(r)\le C\left(\frac{1-r}{1-t}\right)^{\b}\widehat{\om}(t),\quad 0\le r\le t<1;
    \end{split}
    \end{equation}
\item[\rm(iii)] There exist $C=C(\om)>0$ and $\gamma=\gamma(\om)>0$ such that
    \begin{equation}\label{Eq:replacement-Lemma1.2}
    \begin{split}
    \int_0^t\left(\frac{1-t}{1-s}\right)^\g\om(s)\,ds
    \le C\widehat{\om}(t),\quad 0\le t<1;
    \end{split}
    \end{equation}
\item[\rm(iv)] There exists $\lambda=\lambda(\om)\ge0$ such that
    $$
    \int_\D\frac{\om(z)}{|1-\overline{\z}z|^{\lambda+1}}\,dA(z)\asymp\frac{\widehat{\om}(\zeta)}{(1-|\z|)^\lambda},\quad \z\in\D;
    $$
\item[\rm(v)] The associated weight
    $$
    \omega^\star(z)=\int_{|z|}^1\omega(s)\log\frac{s}{|z|}s\,ds,\quad z\in\D\setminus\{0\}.
    $$
satisfies
    $$
    \om(S(z))\asymp\om(T(z))\asymp\om^\star(z),\quad |z|\to1^-.
    $$
\end{itemize}
\end{letterlemma}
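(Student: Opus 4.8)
The plan is to prove the lemma through the equivalences (i)$\Leftrightarrow$(ii)$\Leftrightarrow$(iii)$\Leftrightarrow$(iv) and, separately, (i)$\Leftrightarrow$(v), each step reducing to a dyadic decomposition of $[0,1)$, a Fubini computation, or an elementary one–variable integral estimate.

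For (i)$\Leftrightarrow$(ii): since $\tfrac{1+\rho}{2}=1-\tfrac{1-\rho}{2}$, the doubling hypothesis iterates to $\widehat\om(r)\le C^{n}\widehat\om(1-\tfrac{1-r}{2^{n}})$ for all $n\ge0$; given $0\le r\le t<1$ one takes $n=\lceil\log_2\tfrac{1-r}{1-t}\rceil$, so that $1-\tfrac{1-r}{2^{n}}\ge t$ and, using that $\widehat\om$ is non-increasing and $C^{n}\lesssim(\tfrac{1-r}{1-t})^{\log_2 C}$, one obtains \eqref{Eq:replacement-Lemma1.1} with $\b=\log_2 C$. Conversely, \eqref{Eq:replacement-Lemma1.1} evaluated at $t=\tfrac{1+r}{2}$ returns (i). For (ii)$\Leftrightarrow$(iii): restricting the integral in \eqref{Eq:replacement-Lemma1.2} to $[r,t]$, where $(\tfrac{1-t}{1-s})^{\g}\ge(\tfrac{1-t}{1-r})^{\g}$, gives $\widehat\om(r)-\widehat\om(t)=\int_r^t\om(s)\,ds\le C(\tfrac{1-r}{1-t})^{\g}\widehat\om(t)$, which is \eqref{Eq:replacement-Lemma1.1} with $\b=\g$; for the reverse implication one writes $[0,t)=\bigcup_{k\ge0}E_k$ with $E_k=\{s:2^{k}(1-t)\le 1-s<2^{k+1}(1-t)\}$, bounds $\int_{E_k}\om(s)\,ds\le\widehat\om(1-2^{k+1}(1-t))\lesssim 2^{(k+1)\b}\widehat\om(t)$ by \eqref{Eq:replacement-Lemma1.1} and the factor $(\tfrac{1-t}{1-s})^{\g}$ by $2^{-k\g}$ on $E_k$, and sums the resulting geometric series, which converges provided $\g>\b$; thus \eqref{Eq:replacement-Lemma1.2} holds for every sufficiently large $\g$.

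For (iii)$\Leftrightarrow$(iv) one evaluates the left-hand side of (iv) using radiality: writing $z=\rho e^{i\t}$ and integrating in $\t$,
\[
\int_\D\frac{\om(z)}{|1-\overline{\z} z|^{\lambda+1}}\,dA(z)\asymp\int_0^1\om(\rho)\rho\left(\int_0^{2\pi}\frac{d\t}{|1-|\z|\rho e^{i\t}|^{\lambda+1}}\right)d\rho,
\]
and the standard estimate $\int_0^{2\pi}|1-xe^{i\t}|^{-\lambda-1}\,d\t\asymp(1-x)^{-\lambda}$ (for $\lambda>0$) together with $1-|\z|\rho\asymp(1-|\z|)+(1-\rho)$ reduces the right-hand side, after splitting the $\rho$-integral at $\rho=|\z|$, to $\int_0^{|\z|}\frac{\om(\rho)}{(1-\rho)^{\lambda}}\,d\rho+\frac{\widehat\om(\z)}{(1-|\z|)^{\lambda}}$. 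The second term already supplies the lower bound in (iv), so the two-sided estimate in (iv) holds precisely when $\int_0^{t}(\tfrac{1-t}{1-\rho})^{\lambda}\om(\rho)\,d\rho\lesssim\widehat\om(t)$, i.e.\ exactly (iii) with $\g=\lambda$. For (i)$\Leftrightarrow$(v): the identities $\om(S(a))\asymp(1-|a|)\widehat\om(|a|)$ and (computing the angular cross sections of the tent, then Fubini) $\om(T(a))=\int_{|a|}^{1}(s-|a|)\om(s)\,ds=\int_{|a|}^{1}\widehat\om(t)\,dt$ hold with no hypothesis on $\om$, and $\log(s/|a|)\asymp s-|a|$ uniformly as $|a|\to1^-$ gives $\om^\star(a)\asymp\om(T(a))$; hence (v) is equivalent to $(1-|a|)\widehat\om(|a|)\asymp\int_{|a|}^{1}\widehat\om(t)\,dt$ as $|a|\to1^-$, and this in turn is equivalent to $\om\in\DD$ (the $\lesssim$ bound is immediate from monotonicity, the $\gtrsim$ bound follows from the doubling inequality by keeping only $\int_{(1+|a|)/2}^{1}\widehat\om\ge\tfrac{1-|a|}{2}\widehat\om(\tfrac{1+|a|}{2})$, and the converse reduces to a short dyadic — or logarithmic-derivative — computation with $\Phi(r)=\int_r^1\widehat\om$).

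I expect the only genuinely technical point to be the integral asymptotics underlying (iii)$\Leftrightarrow$(iv): one must handle the estimate $\int_0^{2\pi}|1-xe^{i\t}|^{-\lambda-1}\,d\t\asymp(1-x)^{-\lambda}$ carefully, dispose of the regime of small $\rho$ (and small $|\z|$), and match the quantifier ``there exists $\lambda\ge0$'' in (iv) against ``there exists $\g>0$'' in (iii) by choosing $\lambda=\g$ large enough that the geometric series in the (i)$\Rightarrow$(iii) step converges. Everything else is monotonicity of $\widehat\om$, dyadic decomposition, and Fubini.
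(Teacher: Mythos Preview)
The paper does not actually prove this lemma; it quotes it as Lemma~B and refers the reader to \cite{PelSum14,PRAntequera} for a proof, so there is no in-paper argument to compare against.

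Your outline is the standard one and is essentially correct. The dyadic iteration for (i)$\Leftrightarrow$(ii) and the shell decomposition for (ii)$\Rightarrow$(iii) are routine and accurate; the reduction of (iv) to (iii) via the one--variable estimate $\int_0^{2\pi}|1-xe^{i\theta}|^{-\lambda-1}\,d\theta\asymp(1-x)^{-\lambda}$ together with $1-|\z|\rho\asymp(1-|\z|)+(1-\rho)$ is exactly the right mechanism; and the identification of (v) with the two--sided bound $(1-r)\widehat\om(r)\asymp\int_r^1\widehat\om(t)\,dt$, through the Fubini/integration-by-parts computation $\om(T(a))\asymp\int_{|a|}^1(s-|a|)\om(s)\,ds=\int_{|a|}^1\widehat\om(t)\,dt$ and the first-order Taylor estimate $\log(s/|a|)\asymp s-|a|$, is correct.

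Two minor comments. First, the only place you are genuinely sketchy is (v)$\Rightarrow$(i). The logarithmic-derivative route you allude to does work cleanly: writing $\Phi(t)=\int_t^1\widehat\om$, the hypothesis $\Phi(t)\ge c(1-t)\widehat\om(t)=-c(1-t)\Phi'(t)$ gives $-\Phi'/\Phi\le(c(1-t))^{-1}$; integrating from $r$ to $\tfrac{1+r}{2}$ yields $\Phi(r)\le 2^{1/c}\Phi(\tfrac{1+r}{2})$, and combining with the trivial $\Phi(\rho)\le(1-\rho)\widehat\om(\rho)$ gives $\widehat\om(r)\lesssim\widehat\om(\tfrac{1+r}{2})$. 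Second, statement (iv) nominally allows $\lambda=0$, whereas your angular-integral estimate needs $\lambda>0$; this is harmless, since for $\lambda=0$ the angular integral is $\asymp\log\frac{1}{1-x}$, which forces an extra logarithmic factor on the right-hand side and prevents (iv) from ever holding with $\lambda=0$ for a nontrivial weight.
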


If $\om\in\DD$, then Lemma~\ref{le:1} shows that for each $a\in\D$ and $\gamma=\gamma(\om)>0$ large enough, the function
    $$
    F_{a,p}(z)=\left( \frac{1-|a|^2}{1-\overline{a}z}\right)^{\frac{\gamma+1}{p}},\quad z\in\D,
    $$
belongs to $A^p_\omega$ and satisfies $\|F_{a,p}\|_{A^p_\omega}^p\asymp\omega(S(a))$ and $|F_{a,p}(z)|\asymp 1$ for all $z\in S(a)$. This family of test functions will be frequently used in the sequel.

Apart from the tent spaces $T^q_s(\nu,\om)$, $0<q,s<\infty$, defined in the introduction, we will need to consider the case $q=\infty$. For $0<s<\infty$, define
    $$
    C^s_{s,\nu}(f)(\z)=\sup_{a\in \Gamma(\z)}\frac{1}{\om(T(a))}\int_{T(a)}|f(z)|^s\om(T(z))\,d\nu(z).
    $$
A quasi-norm in the tent space $T^\infty_s(\nu,\om)$ is defined by $\|f\|_{T^\infty_s(\nu,\om)}=\|C_{s,\nu}(f)\|_{L^\infty}$.

The pseudohyperbolic distance from $z$ to $w$ is defined by $\varrho(z,w)=\left|\frac{1-w}{1-\bar{z}w}\right|$, and the pseudohyperbolic
disc of center $a\in\D$ and radius $r\in(0,1)$ is denoted by $\Delta(a,r)=\{z:\varrho(a,z)<r\}$. The Euclidean discs are denoted by $D(a,r)=\{z\in\C:|a-z|<r\}$. Recall that $Z=\{z_k\}_{k=0}^\infty\subset\D$ is called a
separated sequence if it
is separated in the pseudohyperbolic metric, it is an $\e$-net if $\D=\bigcup_{k=0}^\infty \Delta(z_k,\e)$, and finally it is a
$\delta$-lattice if it is a $5\delta$-net and separated with constant $\gamma=\delta/5$. If  we have a discrete measure $\nu=\sum_k \d_{z_k}$, where $\{z_k\}$ is a separated sequence, then we write $T^p_q(\nu,\om)=T^p_q(\{z_k\},\om)$.

Recall that $I_d:A^p_{\om}\to L^q(\mu)$ is compact if it
maps bounded sets of $A^p_\om$ to relatively compact (precompact) sets of
$L^q(\mu)$. Equivalently, $I_d:A^p_{\om}\to L^q(\mu)$ is compact if and only if for
every bounded sequence $\{f_n\}$ in $A^p_\om$ there exists a subsequence that converges in $L^q(\mu)$.

\medskip

\noindent\emph{Proof of} Theorem~\ref{Theorem:normscm}. (i). There are several ways to bound the operator norm of $I_d:A^p_\om\to L^q_\mu$ from above by the claimed supremum. See \cite[Theorem~9]{JAJRTent} or \cite[Theorem~3.3]{PelSum14}, and also \cite[(11)]{JAJRTent} for the particular case $q=p$. The lower bound is obtained by using test functions, for details, see either~\cite[Lemma~8]{JAJRTent} or \cite[Theorem~3.3]{PelSum14}.

(ii). This case can be done by following the proof of \cite[Theorem~2.1(ii)]{PelRat}, with Lemma~\ref{le:1} in hand.

(iii). We first show that
    \begin{equation*}\label{ineq}
    \|I_d\|^q_{A^p_\om\to L^q_\mu}
    \le\|B^\om_\mu\|_{L^{\frac{p}{p-q}}_\om} +\mu(\{0\})
    \asymp\|\Psi^\om_\mu\|_{L^{\frac{p}{p-q}}_\om}
    \asymp\|\Phi^\om_\mu\|_{L^{\frac{p}{p-q}}_\om}+\mu(\{0\})
    \lesssim\|M_\om(\mu)\|_{L^{\frac{p}{p-q}}_\om}
    \lesssim\|I_d\|^q_{A^p_\om\to L^q_\mu}.
    \end{equation*}
Fubini's theorem, H\"older's inequality and \cite[Lemma~4.4]{PelRat} yield
    \begin{equation*}
    \begin{split}
    \|f\|^q_{L^{q}_\mu}
    &=\int_\D\left(\int_{\Gamma(z)}|f(\z)|^q \frac{d\mu(\z)}{\omega(T(\z))}\right)\omega(z)\,dA(z)+\mu(\{0\})|f(0)|^q\\
    &\le\int_\D N(f)^q(z)\int_{\Gamma(z)}\frac{d\mu(\z)}{\omega(T(\z))}\omega(z)\,dA(z)+ \mu(\{0\})|f(0)|^q\\
    &\lesssim \|N(f)\|^q_{L^{p}_\omega}\|B_\mu^\omega \|_{L^{\frac{p}{p-q}}_\omega}+\mu(\{0\})\|f\|^q_{A^{p}_\omega}
    \asymp\|f\|^q_{A^{p}_\omega}\left(\|B_\mu^\omega\|_{L^{\frac{p}{p-q}}_\omega}+\mu(\{0\}\right),
    \end{split}
    \end{equation*}
and hence $\|I_d\|^q_{A^p_\omega\to L^q_\mu}\lesssim\|B_\mu^\omega\|_{L^{\frac{p}{p-q}}_\omega}+\mu(\{0\})$. Moreover, $\|B_\mu^\omega\|_{L^{\frac{p}{p-q}}_\omega}+\mu(\{0\})\asymp\|\Psi_\mu^\omega\|_{L^{\frac{p}{p-q}}_\omega}$ by \cite[Lemma~4]{JAJRTent}.

Now write $d\nu(z)=\frac{d\mu(z)}{\om(T(z))}$ and $h_\lambda(\z,z)=\left(\frac{1-|\z|}{|1-\overline{\z}z|}\right)^\lambda$ for short. Then Fubini's theorem, \cite[Lemma~4]{JAJRTent} and the fact $\omega(T(\z))\asymp\omega(T(u))$ and $h_\lambda(\z,z)\asymp h_\lambda(u,z)$ for $\z\in\Delta(u,r)$, $r\in(0,1)$, yield
    \begin{equation}
    \begin{split}\label{eq:desc}
    \|\Psi^\omega_\mu\|^\frac{p}{p-q}_{L^\frac{p}{p-q}_\omega}
    &=\int_\D\left(\int_\D\frac{h_\lambda(\z,z)}{|\Delta(\z,r)|}|\Delta(\z,r)|\,d\nu(\z)\right)^\frac{p}{p-q}\om(z)\,dA(z)\\
    &\asymp\int_\D\left(\int_\D h_\lambda(u,z)\nu(\Delta(u,r))\,dh(u)\right)^\frac{p}{p-q}\om(z)\,dA(z)\\
    &\asymp\int_\D\left(\int_{\Gamma(z)}\nu(\Delta(u,r))\,dh(u)\right)^\frac{p}{p-q}\om(z)\,dA(z)+\mu(\{0\})\\
    &\asymp\int_\D\left(\int_{\Gamma(z)}\frac{\mu(\Delta(u,r))}{\om(T(u))}\,dh(u)\right)^\frac{p}{p-q}\om(z)\,dA(z)+\mu(\{0\})\\
    &=\|\Phi_\mu^\om\|^\frac{p}{p-q}_{L^\frac{p}{p-q}_\om}+\mu(\{0\})=\|g\|^\frac{p}{p-q}_{T^\frac{p}{p-q}_{1}(h,\omega)}+\mu(\{0\}),
    \end{split}
    \end{equation}
where $g(u)=\frac{\mu(\Delta(u,r))}{\om(T(u))}$. Now \cite[Lemma~7]{JAJRTent} implies
    \begin{equation}\label{10}
    \|g\|_{T^\frac{p}{p-q}_{1}(h,\omega)}\asymp\|C_{1,h}(g)\|_{L^\frac{p}{p-q}_\omega}.
    \end{equation}
Fubini's theorem gives
    \begin{equation*}
    \begin{split}
    C_{1,h}(g)(\z)&=\sup_{a\in\Gamma(\z)}\frac{1}{\omega(T(a))}\int_{T(a)}\mu(\Delta(z,r))\,dh(z)\\
    &=\sup_{\z\in T(a)}\frac{1}{\omega(T(a))}\int_{\D}\left(\int_{T(a)\cap\Delta(u,r)}dh(z)\right)d\mu(u).
    \end{split}
    \end{equation*}
%Fix $r\in(0,1)$ large enough such that $D\left( u,\frac{1-|u|}{2}\right)\subset \Delta(u,r)$ for all $u\in\D$.
The points $u\in\D$ for which $T(a)\cap\Delta(u,r)\ne\emptyset$ are contained in some tent $T(a')$, where $\arg a'=\arg a$ and $1-|a'|\asymp1-|a|$, for all $a\in\D\setminus D(0,\r)$, where $\r=\r(r)\in(0,1)$. Therefore
    \begin{equation*}
    \begin{split}
    \frac{1}{\omega(T(a))}\int_\D\left(\int_{T(a)\cap\Delta(u,r)}\,dh(z)\right)d\mu(u)
    &=\frac{1}{\omega(T(a))}\int_{T(a')}\left(\int_{T(a)\cap\Delta(u,r)}\,dh(z)\right)d\mu(u)\\
    &\lesssim\frac{\mu(T(a'))}{\omega(T(a))}\asymp\frac{\mu(T(a'))}{\omega(T(a'))},\quad a\in\D\setminus D(0,\r),
    \end{split}
    \end{equation*}
and it follows that
\begin{equation}\label{11}
    C_{1,h}(g)(\z)\asymp\sup_{\z\in T(a)}\frac{\mu(T(a))}{\omega(T(a))}
    \lesssim M_\omega(\mu)(\z),\quad\z\in\D\setminus\{0\}.
    \end{equation}
By combining \eqref{eq:desc}, \eqref{10} and \eqref{11},
we deduce
    \begin{equation*}
    \begin{split}
    \|\Psi^\omega_\mu\|_{L^\frac{p}{p-q}_\omega}
    &\asymp\|\Phi_\mu^\om\|_{L^\frac{p}{p-q}_\om}+\mu(\{0\})=\|g\|_{T^\frac{p}{p-q}_{1}(h,\omega)}+\mu(\{0\})\\
    &\asymp\|C_{1,h}(g)\|_{L^\frac{p}{p-q}_\omega}+\mu(\{0\})
    \lesssim\|M_\omega(\mu)\|_{L^\frac{p}{p-q}_\om}.
    \end{split}
    \end{equation*}
It remains to prove $\|M_\om(\mu)\|_{L^{\frac{p}{p-q}}_\om}\lesssim \|I_d\|^q_{A^p_\om\to L^q_\mu}$. To do this, we will show that $M_\om(\mu)$ is pointwise equivalent to the sum of two dyadic maximal functions.

Let $I_{n,k+l}=\{e^{i\theta}:\frac{\pi(k+l)}{2^{n+2}}\le\theta<\frac{\pi (k+l+1)}{2^{n+2}}\}$, and denote $\Upsilon_l=\{I_{n,k+l}:n\in\N\cup\{0\}$,  $k=0,1,\ldots,2^{n+2}-1\}$ and $l\in\{0,\frac12\}$. Define the dyadic maximal functions
    \begin{equation*}
    \begin{split}
    \widetilde{M}^d_{\omega,l}(\mu)(z)=\max\left\{\sup_{z\in T(I),I\in \Upsilon_l }\frac{\mu(T(I))}{\omega(T(I))} , \frac{\mu(\D)}{\omega(\D)}\right\},\quad z\in\D,\quad l\in\left\{0,\frac12\right\},
    \end{split}
    \end{equation*}
and set
    $$
    \widetilde{M}^d_\omega(\mu)(z)=\widetilde{M}^d_{\omega,0}(\mu)(z)+\widetilde{M}^d_{\omega,\frac{1}{2}}(\mu)(z),\quad z\in\D.
    $$
If $\om\in\DD$, then $\widetilde{M}^d_\omega(\mu)(z)\lesssim M_\omega(\mu)(z)$ for all $z\in\D$ because $\sup_{I\subset \T}\frac{\omega(S(I))}{\omega(T(I))}<\infty$ by Lemma~\ref{le:1}. For the converse inequality, given $I\subset \T$ such that $z\in S(I)$ there exist intervals $I_{n,k},I_{n,k+1}$ (if $k=2^{n+2}$, take $I_{n,k}=I_{n,0}$) such that $|I_{n+1,0}|<|I|\le|I_{n,0}|$, $I_{n,k}\cap I\neq\emptyset$ and $I_{n,k-1}\cap I=\emptyset$. We may assume that $n\ge3$, for otherwise the inequality we are searching for is immediate. Then $I\subset I_{n,k}\cup I_{n,k+1}$, and there exists $I_{n-3,m}\in\Upsilon_0\cup \Upsilon_{\frac{1}{2}}$ such that $\bigcup_{i=k-2}^{k+3}I_{n,i}\subset I_{n-3,m}$ and $S(I)\subset T(I_{n-3,m})$. Therefore Lemma~\ref{le:1} yields
    \begin{equation*}
    \begin{split}
    \frac{\mu(S(I))}{\omega(S(I))}
    \le\frac{\mu(T(I_{n-3,m}))}{\omega(T(I))} \leq \frac{\mu(T(I_{n-3,m}))}{\omega(T(I_{n+1,0}))}
    \asymp\frac{\mu(T(I_{n-3,m}))}{\omega(T(I_{n-3,m}))}\le\widetilde{M}^d_\omega(\mu)(z),\quad z\in S(I).
    \end{split}
    \end{equation*}
It follows that
    $
    M_\omega(\mu)(z)\lesssim\widetilde{M}^d_\omega(\mu)(z)
    $,
for all $z\in\D$, and hence
    \begin{equation}\label{eq:pointmax}
    M_\omega(\mu)(z)\asymp\widetilde{M}^d_\omega(\mu)(z),\quad z\in\D.
    \end{equation}

To estimate the norm of $\widetilde{M}^d_\omega(\mu)$ upwards, let choose $\{z_k\}$ be a separated sequence and
define
    $$
    S_\lambda(f)(z)=\sum_k f(z_k)\left(\frac{1-|z_k|}{1-\overline{z}_kz}\right)^\lambda,\quad z\in\D.
    $$
By \cite[Lemma~6]{JAJRTent} there exists $\lambda=\lambda(\om)>1$ such that $S_\lambda:\,T^p_{2}(\{z_k\},\omega)\to A^p_\omega$ is bounded. By denoting $\{b_{k}\}=\{f(z_{k})\}$, this implies
    \begin{equation*}
    \begin{split}
    \int_{\D}\left|\sum_{k}b_{k}h_\lambda(z_{k},z)\right|^q\,d\mu(z)
    &=\|S_\lambda(f)\|^q_{L^q(\mu)}
    \lesssim\|I_d\|^q_{A^p_\omega\to L^q_\mu}\|S_\lambda\|^q_{T^p_{2}(\{z_k\},\omega)\to A^p_\omega}\|\{b_k\}\|^q_{T^p_{2}(\{z_k\},\omega)}.
    \end{split}
    \end{equation*}
By replacing $b_k$ by $r_k(t)b_k$, where $r_k$ denotes the $k$th Rademacher function, using the fact that $|h_\lambda(z_{k},z)|\gtrsim\chi_{T(z_{k})}(z)$ for $z\in T(z_{k})$, and applying Khinchine's inequality, we deduce
    \begin{equation}\label{2}
    \begin{split}
    \int_\D \left( \sum_k |b_k|^2\chi_{T(z_k)}(z)\right)^\frac{q}{2}d\mu(z)
    \lesssim\|I_d\|^q_{A^p_\omega\to L^q_\mu}\|S_\lambda\|^q_{T^p_{2}(\{z_k\},\omega)\to A^p_\omega}\|\{b_k\}\|^q_{T^p_{2}(\{z_k\},\omega)}.
    \end{split}
    \end{equation}

Let $l\in\{0,\frac12\}$ be fixed. For each $k\in\Z$, let $\EE_k$ denote the collection of maximal dyadic tents $T\in\{T(I):I\in\Upsilon_l\}\cup\{\D\}$ with respect to inclusion such that $\mu(T)>2^k\omega(T)$, and let $E_k=\cup_{T\in\EE_k}T$. Then $2^k<\widetilde{M}^d_{\omega,l}(\mu)(z)\le2^{k+1}$ for $z \in E_k\setminus E_{k+1}$. Let now $\{b_T\}$ be a sequence indexed by $T\in\EE=\cup_{k}\EE_k$. Assume for a moment that $\mu$ has compact support. Then $\{b_T\}$ is a finite sequence. For $T\in\EE$, let $G(T)=T-\cup\{T'\in\EE:T'\subsetneq T\}$, and hence $G(T)=T-\cup\{T'\in\EE_{k+1}:T'\subsetneq T\}$ for $T\in \EE_k$. If $T_1,T_2\in\EE$ are different (either one is strictly included in the other or they are disjoint), the sets $G(T_1)$ and $G(T_2)$ are disjoint, and hence
    \begin{equation}\label{3}
    \begin{split}
    \left( \sum_{T\in \EE} |b_T|^2\chi_{T}(z)\right)^\frac{q}{2}\geq \left( \sum_{T\in \EE} |b_T|^2\chi_{G(T)}(z)\right)^\frac{q}{2} = \sum_{T\in \EE} |b_T|^q\chi_{G(T)}(z).
    \end{split}
    \end{equation}
Index the tents in $\EE$ according to which $\EE_k$ with maximal index they belong to, by writing $\EE_k\setminus\cup_{m<k}\EE_m=\{T_j^k:j\in\N\}$. Further, denote $b_{k,j}=b_{T_j^k}$ and let $z_{k,j}$ denote the vertex of $T^k_j$ i.e. $T^k_j=T(z_{k,j})$ (with the convenience that the vertex of $\D$ is the origin). The estimates \eqref{2} and \eqref{3} yield
    \begin{equation}
    \begin{split}\label{eq:4}
    &\sum_{k,j}b^q_{k,j}\left(\mu(T_j^k)-\sum_{T_i^{k+1}\subset T_j^k}\mu(T_i^{k+1})\right)
    \lesssim\|I_d\|^q_{A^p_\omega\to L^q_\mu}
    \|\{b_{k,j}\}\|^q_{T^p_{2}(\{z_{k,j}\},\omega)}\\
    &\quad=\|I_d\|^q_{A^p_\omega\to L^q_\mu}
    \left(\int_\D\left(\sum_{k,j}|b_{k,j}|^2\chi_{T_j^k}(z)\right)^\frac{p}{2}\omega(z)\,dA(z)\right)^\frac{q}{p}.
    \end{split}
    \end{equation}
Write $r=\frac{p}{q}$ for short, and choose $b_{k,j}^q=2^{k(r'-1)}$ for each $k$ and $j$. Then, by using the inequality $2^k<\widetilde{M}^d_{\omega,l}(\mu)(z)\le2^{k+1}$ for $z \in E_k\setminus E_{k+1}$, the left hand side of \eqref{eq:4} can be estimated as
    \begin{equation*}
    \begin{split}
    &\sum_{k,j}b^q_{k,j}\left(\mu(T_j^k)-\sum_{T_i^{k+1}\subset T_j^k}\mu(T_i^{k+1})\right)
    =\sum_{k}2^{k(r'-1)}\mu(E_k)-\sum_{k}2^{k(r'-1)}\sum_j\mu(T_j^k\cap E_{k+1})\\
    &=\sum_{k} (2^{k(r'-1)}-2^{(k-1)(r'-1)})\mu(E_k)
    =\left(1-\frac{1}{2^{r'-1}}\right)\sum_{k}2^{kr'}2^{-k}\sum_j\frac{\mu(T_j^k)}{\omega(T_j^k)}\omega(T_j^k)\\
    &\gtrsim\sum_k2^{kr'}\omega(E_k)
    \gtrsim\sum_{k}\int_{E_k\setminus E_{k+1}}\left(\widetilde{M}^d_{\omega,l}(\mu)(z)\right)^{r'}\omega(z)\,dA(z)
    =\|\widetilde{M}^d_{\omega,l}(\mu)\|_{L^{r'}_\omega}^{r'},
    \end{split}
    \end{equation*}
while the integral on the right hand side of \eqref{eq:4} with the notation $\eta=2^{(r'-1)\frac{2}{q}}$ becomes
    \begin{equation*}
    \begin{split}
    &\int_\D\left(\sum_{k,j}|b_{k,j}|^2\chi_{T_j^k}(z)\right)^\frac{p}{2}\omega(z)\,dA(z)
    =\int_\D\left(\sum_{k,j}\eta^k\chi_{T_j^k}(z)\right)^\frac{p}{2}\omega(z)\,dA(z)\\
    &=\int_\D\left(\sum_{k}\eta^k\chi_{E_k}(z)\right)^\frac{p}{2}\omega(z)\,dA(z)
    =\int_\D\left(\frac{\eta}{\eta-1}\sum_{k}\left(\eta^k-\eta^{k-1}\right)\chi_{E_k}(z)\right)^\frac{p}{2}\omega(z)\,dA(z)\\
    &\asymp\int_\D\left(\sum_{k}\eta^k\left(\chi_{E_k}(z)-\chi_{E_{k+1}}(z)\right)\right)^\frac{p}{2}\omega(z)\,dA(z)
    =\int_\D\left(\sum_{k}\eta^k \chi_{E_k\setminus E_{k+1} }(z)\right)^\frac{p}{2}\omega(z)\,dA(z)\\
    &=\int_\D\sum_{k} \eta^\frac{kp}{2} \chi_{E_k\setminus E_{k+1}}(z)\omega(z)\,dA(z)
    =\int_\D\sum_{k} 2^{r'k} \chi_{E_k\setminus E_{k+1} }(z)\omega(z)\,dA(z)\\
    &\asymp\sum_{k}\int_{E_k\setminus E_{k+1}}\left(\widetilde{M}^d_{\omega,l}(\mu)(z)\right)^{r'}\omega(z)\,dA(z)
    =\|\widetilde{M}^d_{\omega,l}(\mu)\|_{L^{r'}_\omega}^{r'}.
    \end{split}
    \end{equation*}
 Consequently, $\|\widetilde{M}^d_{\omega,l}(\mu)\|_{L^{r'}_\omega}^{r'}\lesssim\|I_d\|^q_{A^p_\omega\to L^q_\mu}\|\widetilde{M}^d_{\omega,l}(\mu)\|_{L^{r'}_\omega}^{\frac{r'}{r}}$, and thus $\|\widetilde{M}^d_{\omega,l}(\mu)\|_{L^{\frac{p}{p-q}}_\omega}\lesssim\|I_d\|^q_{A^p_\omega\to L^q_\mu}$ because $r'=\left(\frac{p}{q}\right)'=\frac{p}{p-q}$. Since this is valid for $l\in\{0,\frac12\}$, using Minkowski's inequality and
 \eqref{eq:pointmax} we get
    \begin{equation*}
    \begin{split}
    \|M_\omega(\mu)\|_{L^\frac{p}{p-q}_\omega}
    \asymp\|\widetilde{M}^d_\omega(\mu)\|_{L^{\frac{p}{p-q}}_\omega}
    \le\|\widetilde{M}^d_{\omega,0}(\mu)\|_{L^{\frac{p}{p-q}}_\omega}
    +\|\widetilde{M}^d_{\omega,\frac{1}{2}}(\mu)\|_{L^{\frac{p}{p-q}}_\omega}
    \lesssim\|I_d\|^q_{A^p_\omega\to L^q_\mu}
    \end{split}
    \end{equation*}
for $\mu$ with compact support. If $\mu$ is positive, then the above estimate, applied to the compactly supported $\mu_r=\chi_{D(0,r)}\mu$, and the standard limiting argument with monotone convergence theorem gives $\|M_\omega(\mu)\|_{L^\frac{p}{p-q}_\omega}\lesssim\|I_d\|^q_{A^p_\omega\to L^q_\mu}$. Hence the claimed operator norm estimates are valid and, in particular, (b)-(e) are equivalent.

To complete the proof of (iii), it suffices to show that $I_d:A^p_\om\to L^q_\mu$ is compact if (e) is satisfied.
By the hypothesis (e), \eqref{eq:desc} and
the dominated convergence theorem,
    \begin{equation}
    \begin{split}\label{s4}
    0 &=\lim_{R\to 1^-}\int_\D \left(\int_{\{R<|z|<1\}} \left(\frac{1-|z|}{|1-\overline{\z}z|}\right)^{\lambda} \frac{d\mu(z)}{\om(T(z))}\right)^{\frac{p}{p-q}}\,\om(\z)\,dA(\z)\\
    &\gtrsim \lim_{R\to 1^-} \int_\D \left(\int_{\Gamma(\z)\setminus\overline{D(0,R)}} \frac{d\mu(z)}{\om(T(z))}\right)^{\frac{p}{p-q}}\,\om(\z)\,dA(\z).
    \end{split}
    \end{equation}
Let $\{f_n\}$ be a bounded sequence in $A^p_\om$. Then $\{f_n\}$ is locally bounded and thus constitutes a normal family. Hence we may extract  a subsequence $\{f_{n_k}\}$ that converges uniformly on compact subsets of $\D$ to $f\in A^p_\om$. Write $g_k=f_{n_k}-f$. For $\e>0$, by \eqref{s4}, there exists $R_0=R_0\in(0,1)$ such that
    $$
    \int_\D\left(\int_{\Gamma(\z)\setminus\overline{D(0,R_0)}}\frac{d\mu(z)}{\om(T(z))}\right)^{\frac{p}{p-q}}\,\om(\z)\,dA(\z)<\e^{\frac{p}{p-q}}.
    $$
By the uniform convergence, we may choose $k_0\in\N$ such that $|g_k(z)|<\e^{1/q}$ for all $k\ge k_0$ and $z\in\overline{D(0,R_0)}$. Then Fubini's theorem, H\"older's inequality and \cite[Lemma~4.4]{PelRat} yield
    \begin{equation*}
    \begin{split}
    \|g_k\|^q_{L^{q}_\mu}
    &=\int_{\overline{D(0,R_0)}}|g_k(\z)|^q\,d\mu(\z)+\int_{\D\setminus\overline{D(0,R_0)}}|g_k(\z)|^q\,d\mu(\z)\\
    &\le\e\mu(\D)+\int_\D\left(\int_{\Gamma(z)\setminus\overline{D(0,R_0)}}|g_k(\z)|^q \frac{d\mu(\z)}{\omega(T(\z))}\right)\omega(z)\,dA(z)\\
    &\le\e\mu(\D)+\int_\D N(g_k)^q(z)\left(\int_{\Gamma(z)\setminus\overline{D(0,R_0)}}\frac{d\mu(\z)}{\omega(T(\z))}\right)\,\omega(z)\,dA(z)\\
    &\le\e\mu(\D)+\|N(g_k)\|^q_{L^{p}_\omega}\e
    \asymp\e\mu(\D)+\|g_k\|^q_{A^{p}_\omega}\e\lesssim \e,
    \end{split}
    \end{equation*}
and thus $I_d: A^p_\om \to L^q_\mu$ is compact.\hfill$\Box$

\section{Bounded operators $G_{\mu,s}^v:A^p_\omega\to L^q_\omega$}\label{Sec:boundedoperators}

Theorem~\ref{Theorem2-intro} is equivalent to the following result.

\begin{theorem}\label{Theorem:main-s}
Let $0<p,q,s<\infty$ such that $1+\frac{s}p-\frac{s}{q}>0$, $\omega\in\DD$ and let $\mu,v$ be  positive Borel measures on $\D$
such that $\mu\left(\{z\in\D: v(T(z))=0\}\right)=0=\mu(\{0\})$. Then the following assertions hold:
    \begin{itemize}
    \item[\rm(i)]$G_{\mu,s}^v:A^p_\omega\to L^q_\omega$ is bounded if and only if $\muov$ is a $\left(p+s-\frac{ps}{q}\right)$-Carleson measure for~$A^p_\omega$. Moreover,
    $$
    \|G_{\mu,s}^v\|^s_{A^p_\om\to L^q_\om}\asymp\|I_d\|^{p+s-\frac{ps}{q}}_{A^p_\omega\to L^{p+s-\frac{ps}{q}}_{\muov}}.
    $$
    \item[\rm(ii)] $G_{\mu,s}^v:A^p_\omega\to L^q_\omega$ is compact if and only if
$I_d:A^p_\omega\to L^{p+s-\frac{ps}{q}}_{\muov}$ is compact.
    \end{itemize}
\end{theorem}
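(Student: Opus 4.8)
The plan is to reduce the statement about the generalized area operator $G^v_{\mu,s}$ to a statement about a plain embedding $I_d:A^p_\omega\to L^r_{\mu_\omega^v}$ with $r=p+s-\frac{ps}{q}$, and then invoke Theorem~\ref{Theorem:normscm}. The starting observation is that, after replacing $d\mu(\zeta)$ by $d\mu_\omega^v(\zeta)=\frac{\omega(T(\zeta))}{v(T(\zeta))}\,d\mu(\zeta)$ exactly as in the computation \eqref{Eq:case-q=s}, one has the identity
    \begin{equation*}
    \|G^v_{\mu,s}(f)\|^q_{L^q_\omega}=\int_\D\left(\int_{\Gamma(z)}|f(\zeta)|^s\,\frac{d\mu_\omega^v(\zeta)}{\omega(T(\zeta))}\right)^{\frac{q}{s}}\omega(z)\,dA(z)=\|f^s\|^{\frac{q}{s}}_{T^{q/s}_1(d\mu_\omega^v/\omega(T(\cdot)),\,\omega)},
    \end{equation*}
so everything is governed by a single tent-space quantity built from $\mu_\omega^v$. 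Writing $\nu_1=d\mu_\omega^v/\omega(T(\cdot))$, the task is to show that $\|G^v_{\mu,s}\|^s_{A^p_\omega\to L^q_\omega}\asymp\|I_d\|^{r}_{A^p_\omega\to L^r_{\mu_\omega^v}}$, and that the corresponding compactness statements match.

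First I would treat the \emph{upper bound} for boundedness. Assume $\mu_\omega^v$ is an $r$-Carleson measure for $A^p_\omega$. The idea is to dominate $G^v_{\mu,s}(f)(z)^s$ pointwise by a product of $N(f)$ (raised to an appropriate power, absorbing the "excess" $s-\frac{ps}{q}$ of mass) and the Hörmander-type maximal function $M_\omega(\mu_\omega^v)$ of Theorem~\ref{co:maxbou}. Concretely: split $|f(\zeta)|^s=|f(\zeta)|^{s-\beta}|f(\zeta)|^\beta$ inside $\Gamma(z)$, bound the first factor by $N(f)(z)^{s-\beta}$ pulled out of the integral, and estimate the remaining $\int_{\Gamma(z)}|f(\zeta)|^\beta\frac{d\mu_\omega^v(\zeta)}{\omega(T(\zeta))}$ via a Carleson/maximal-function inequality — using that by Theorem~\ref{Theorem:normscm}(i) (when $r\ge p$) or by the $B^\omega_{\mu_\omega^v}\in L^{p/(p-r)}_\omega$ description in Theorem~\ref{Theorem:normscm}(iii) (when $r<p$), the measure $\mu_\omega^v$ controls such integrals. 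Choosing $\beta$ so that the exponents balance, integrating in $z$ against $\omega$, applying Hölder with the split exponent $\frac{q}{s}=\frac{q}{q}\cdot\frac{q\text{-piece}}{}$ — here one uses $1+\frac{s}{p}-\frac{s}{q}>0$ precisely to guarantee the relevant conjugate exponents are positive — and finally invoking $\|N(f)\|_{L^p_\omega}\asymp\|f\|_{A^p_\omega}$ from \cite[Lemma~4.4]{PelRat} together with Theorem~\ref{co:maxbou}, one gets $\|G^v_{\mu,s}(f)\|_{L^q_\omega}\lesssim\|f\|_{A^p_\omega}^{\,?}$ with the right homogeneity, i.e. $\|G^v_{\mu,s}\|^s\lesssim\|I_d\|^r$. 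The bookkeeping of exponents is routine but delicate; I expect this to be where the condition $1+\frac{s}{p}-\frac{s}{q}>0$ is really used.

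Next the \emph{lower bound}: plug the test functions $F_{a,p}(z)=\left(\frac{1-|a|^2}{1-\overline a z}\right)^{(\gamma+1)/p}$ from Lemma~\ref{le:1}, which satisfy $\|F_{a,p}\|^p_{A^p_\omega}\asymp\omega(S(a))$ and $|F_{a,p}|\asymp 1$ on $S(a)$. Since $\Gamma(z)\supset S(a)$ for $z$ in a fixed-size sub-tent near $a$, one gets $G^v_{\mu,s}(F_{a,p})(z)^s\gtrsim \frac{\mu_\omega^v(S(a))}{\omega(S(a))}$ on a set of $\omega$-measure $\asymp\omega(S(a))$, whence $\|G^v_{\mu,s}(F_{a,p})\|^q_{L^q_\omega}\gtrsim\left(\frac{\mu_\omega^v(S(a))}{\omega(S(a))}\right)^{q/s}\omega(S(a))$; rearranging and comparing with $\|F_{a,p}\|^q_{A^p_\omega}\asymp\omega(S(a))^{q/p}$ produces $\frac{\mu_\omega^v(S(a))}{\omega(S(a))^{r/p}}\lesssim\|G^v_{\mu,s}\|^{?}$, which by Theorem~\ref{Theorem:normscm}(i) is exactly the $r$-Carleson condition when $r\ge p$; for $r<p$ one instead tests against sums $\sum_k F_{a_k,p}$ over a lattice with Rademacher coefficients and uses Khinchine's inequality plus Theorem~\ref{Theorem:normscm}(iii)(c), mirroring the argument used in the proof of Theorem~\ref{Theorem:normscm}(iii). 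This gives (i).

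Finally, for the \emph{compactness equivalence} (ii), I would argue both directions by the now-standard normal-families/$\varepsilon$-splitting scheme, exactly as in the last part of the proof of Theorem~\ref{Theorem:normscm}(iii): for a bounded sequence $\{f_n\}\subset A^p_\omega$, extract a subsequence converging locally uniformly to some $f\in A^p_\omega$, put $g_k=f_{n_k}-f$, split the defining integral for $G^v_{\mu,s}(g_k)$ into the part over $\overline{D(0,R_0)}$ (small by uniform convergence, using $\mu_\omega^v(\D)<\infty$) and the tail over $\D\setminus\overline{D(0,R_0)}$ (small because compactness of $I_d:A^p_\omega\to L^r_{\mu_\omega^v}$ forces the tail Carleson/maximal quantity to vanish, via the dominated-convergence argument of \eqref{s4}), and for the tail reuse the same pointwise domination by $N(g_k)$ and the tail maximal function as in the boundedness proof. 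Conversely, compactness of $G^v_{\mu,s}$ yields the vanishing-Carleson condition by evaluating on the normalized test functions $F_{a,p}/\|F_{a,p}\|_{A^p_\omega}$, which tend to $0$ weakly as $|a|\to1^-$, so their images must tend to $0$ in $L^q_\omega$, giving $\lim_{|a|\to1}\frac{\mu_\omega^v(S(a))}{\omega(S(a))^{r/p}}=0$ (resp. the $L^{p/(p-r)}_\omega$-tail condition when $r<p$), which by Theorem~\ref{Theorem:normscm}(ii)--(iii) is compactness of $I_d$. The main obstacle throughout is the exponent bookkeeping in the boundedness estimate and making the pointwise domination $G^v_{\mu,s}(f)^s\lesssim N(f)^{s-\beta}\cdot(\text{Carleson piece})$ rigorous in both regimes $r\ge p$ and $r<p$ simultaneously; the compactness part is then essentially a soft corollary.
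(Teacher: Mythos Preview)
Your overall architecture --- reduce to the measure $\mu^\omega_v$, invoke Theorem~\ref{Theorem:normscm}, split into cases --- is correct, and your upper bound via the split $|f|^s=|f|^{s-\beta}|f|^\beta$ together with $N(f)$ and H\"older is exactly what the paper does when $q\le s$. But there are three genuine gaps.

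\textbf{The lower bound geometry is wrong.} Your claim that ``$\Gamma(z)\supset S(a)$ for $z$ in a fixed-size sub-tent near $a$'' is false: $\Gamma(z)$ only contains points $\xi$ with $|\xi|<|z|$, while $S(a)$ contains points with modulus arbitrarily close to $1$, so $\Gamma(z)\not\supset S(a)$ for any $z\in\D$. Consequently the pointwise lower bound $G^v_{\mu,s}(F_{a,p})(z)^s\gtrsim\mu^\omega_v(S(a))/\omega(S(a))$ cannot be obtained this way; the $\mu^\omega_v$-mass in $S(a)$ may sit entirely outside $\Gamma(z)$. The paper's necessity argument is substantially different. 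For $q>s$ it does \emph{not} estimate $G(F_{a,p})$ pointwise: it writes $\mu^\omega_v(S(a))\asymp\int_{S(a)}|F_{a,p}|^s\,d\mu^\omega_v$, inserts $\frac{1}{v(T(z))}\int_{T(z)}|F_{a,(q/s)'}|\,\omega\,dA$, applies Fubini to turn this into $\int|F_{a,(q/s)'}|\,G(F_{a,p})^s\,\omega\,dA$, and then H\"older with exponents $q/s$ and $(q/s)'$. For $q<s$ (with $p\le q$) the argument is more delicate still: one chooses $\alpha>\beta>1$ with $\beta/\alpha=q/s$, splits via H\"older into $\|G_{\mu,s}^v(F_{a,p})\|_{L^q_\omega}$ and $\|G_{\mu,1}^v(\chi_{S(a)})\|_{L^{\beta'/\alpha'}_\omega}$, estimates the latter by duality and Theorem~\ref{co:maxbou}, and then --- crucially --- applies the resulting inequality to the truncated measures $\mu_r=\chi_{D(0,r)}\mu$ so that the Carleson supremum on the right is finite and can be absorbed, finishing with Fatou. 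Your single-test-function scheme does not produce this.

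\textbf{Upper bound when $q>s$ and $p\le q$.} Your pointwise domination $G(f)^s\le N(f)^{s-\beta}\cdot(\cdots)$ followed by H\"older works only when the inner integral ends up with exponent $\le1$ (so Fubini collapses it) or when $q<p$ (so the full factor $B^v_\mu$ can be pulled out). For $p\le q$ and $q>s$ the paper instead uses duality: $\|G(f)\|^s_{L^q_\omega}=\sup_{\|h\|_{L^{(q/s)'}_\omega}\le1}\int|h|\,G(f)^s\,\omega\,dA$, Fubini turns $\int_{T(\zeta)}|h|\,\omega\,dA/\omega(T(\zeta))$ into something dominated by $M_\omega(h)(\zeta)$, then H\"older with exponents $t/s$ and $(t/s)'$ and Theorem~\ref{co:maxbou} (applied to $M_\omega:L^{(q/s)'}_\omega\to L^{(t/s)'}_{\mu^\omega_v}$) close the estimate. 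Your plan does not mention this duality step.

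\textbf{Compactness and nonlinearity.} Showing $\|G^v_{\mu,s}(g_k)\|_{L^q_\omega}\to0$ for $g_k=f_{n_k}-f$ is \emph{not} the same as $\|G^v_{\mu,s}(f_{n_k})-G^v_{\mu,s}(f)\|_{L^q_\omega}\to0$, because $G^v_{\mu,s}$ is not linear (only sublinear for $s\ge1$, and for $s<1$ one only has $G(f+g)^s\le G(f)^s+G(g)^s$). The paper bridges this with a Scheff\'e-type lemma (Lemma~\ref{technichal}): from $\|G(g_k)\|\to0$ one deduces $\|G(f_{n_k})\|\to\|G(f)\|$ via sub-additivity, and since $G(f_{n_k})(z)\to G(f)(z)$ pointwise (by locally uniform convergence of $f_{n_k}$ and dominated convergence inside each $\Gamma(z)$), Lemma~\ref{technichal} gives convergence in norm. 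Your plan skips this.
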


Theorem~\ref{Theorem:main-s}(i) will be proved in two parts. We first deal with the case $q\ge p$.

\begin{theorem}\label{th:casepleq}
Let $0<p\le q<\infty$, $0<s<\infty$ and $\omega\in\DD$, and let $\mu,v$ be  positive Borel measures on $\D$
such that $\mu\left(\{z\in\D: v(T(z))=0\}\right)=0=\mu(\{0\})$. Then $G_{\mu,s}^v : A^p_\omega \to L^q_\omega$ is bounded if and only if $\muov$ is a $\left(p+s-\frac{ps}{q}\right)$-Carleson measure for $A^p_\omega$. Moreover,
    \begin{equation*}
    \begin{split}
    \|G_{\mu,s}^v\|^s_{A^p_\omega\to L^{q}_\omega}
    \asymp\|I_d\|^{p+s-\frac{ps}{q}}_{A^p_\omega\to L^{p+s-\frac{ps}{q}}_{\muov}}
    \asymp\sup_{a\in\D}\frac{\muov(S(a))}{\omega(S(a))^{1+\frac{s}{p}-\frac{s}{q}}}.
    \end{split}
    \end{equation*}
\end{theorem}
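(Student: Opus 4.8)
The plan is to prove Theorem~\ref{th:casepleq} by reducing the boundedness of $G_{\mu,s}^v$ on $A^p_\om$ to a Carleson-measure statement, exploiting the Fubini identity \eqref{Eq:case-q=s} that already converts the $L^s_\om$-norm of $G_{\mu,s}^v(f)$ into $\|f\|_{L^s_{\muov}}^s$. The second $\asymp$ in the statement is immediate from Theorem~\ref{Theorem:normscm}(i), since $p\le q$ forces $p+s-\frac{ps}{q}=p+s(1-\frac pq)\ge p$, so $\muov$ is a $(p+s-\frac{ps}{q})$-Carleson measure for $A^p_\om$ iff $\sup_{a\in\D}\muov(S(a))/\om(S(a))^{1+\frac sp-\frac sq}<\infty$, with comparable constants. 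So everything reduces to the first $\asymp$.

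For the \emph{sufficiency / upper bound}, I would write $q=\frac{q}{s}\cdot s$ and use that $N(f)(z)=\sup_{\z\in\Gamma(z)}|f(\z)|$ dominates the integrand: pointwise $G_{\mu,s}^v(f)(z)^s\le N(f)(z)^s\cdot B^{v,\om}(z)$ where $B^{v,\om}(z)=\int_{\Gamma(z)}\frac{\om(T(\z))}{v(T(\z))}\,d\mu(\z)=\int_{\Gamma(z)}d\muov(\z)/\om(T(\z))\cdot\om(T(\z))$; more usefully, compare directly with the weighted maximal function: by \cite[Lemma~4]{JAJRTent}-type averaging, $B_{\muov}^\om(z)=\int_{\Gamma(z)}d\muov(\z)/\om(T(\z))$ is pointwise $\lesssim M_\om(\muov)(z)\lesssim\sup_a\muov(S(a))/\om(S(a))$, so $G_{\mu,s}^v(f)(z)^s\le N(f)(z)^s\,B_{\muov}^\om(z)$ only covers the $v=\om$ normalization — instead I should integrate: $\|G_{\mu,s}^v(f)\|_{L^q_\om}^q=\int_\D(G_{\mu,s}^v(f)(z)^s)^{q/s}\om(z)\,dA(z)$, apply the pointwise bound $G_{\mu,s}^v(f)(z)^s\lesssim M_{\om,1}(\muov)(z)^{1-\theta}\cdot(\text{something integrable})$ via Hölder inside $\Gamma(z)$, splitting $|f(\z)|^s = |f(\z)|^{sa}\cdot|f(\z)|^{s(1-a)}$. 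The cleanest route is: by Hölder with exponents chosen so $s/a = s+p(1-s/q)$ hmm — actually the standard trick is to bound $G_{\mu,s}^v(f)^s \le N(f)^{s-\delta}\left(\int_{\Gamma(z)}|f(\z)|^\delta \frac{\om(T(\z))}{v(T(\z))}d\mu(\z)\right)$ for suitable $\delta$, then combine $\|N(f)\|_{L^p_\om}\asymp\|f\|_{A^p_\om}$ (by \cite[Lemma~4.4]{PelRat}) with the Carleson-embedding estimate $\|f\|_{L^\delta_{\muov}}^\delta\lesssim\|f\|_{A^p_\om}^\delta$ when $\delta=p+s-\frac{ps}{q}$ and a Hölder step with exponents $\frac{q}{q-s}\cdot\frac ss$ to recombine the integrals over $\D$; one checks the exponent bookkeeping closes precisely because $1+\frac sp-\frac sq>0$ guarantees the relevant Hölder exponents are admissible.

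For the \emph{necessity / lower bound}, I would feed the test functions $F_{a,p}(z)=\bigl(\frac{1-|a|^2}{1-\overline a z}\bigr)^{(\gamma+1)/p}$ into $G_{\mu,s}^v$. Since $|F_{a,p}(\z)|\asymp1$ on $S(a)$ and $\|F_{a,p}\|_{A^p_\om}^p\asymp\om(S(a))$, boundedness gives $\|G_{\mu,s}^v(F_{a,p})\|_{L^q_\om}^q\lesssim\|G_{\mu,s}^v\|^q\,\om(S(a))^{q/p}$. From below, for $z$ ranging over a suitable piece of the disc with $a\in\Gamma(z)$-type geometry one has $G_{\mu,s}^v(F_{a,p})(z)^s\gtrsim\int_{T(a)}\frac{\om(T(\z))}{v(T(\z))}d\mu(\z)\cdot\inf_{T(a)}|F_{a,p}|^s\asymp\muov(T(a))$ on a set of $\om$-measure $\gtrsim\om(S(a))$ (using $\om\in\DD$ and $\om(T(a))\asymp\om(S(a))$ from Lemma~\ref{le:1}(v)); hence $\muov(T(a))^{q/s}\om(S(a))\lesssim\|G_{\mu,s}^v\|^q\om(S(a))^{q/p}$, i.e.\ $\muov(S(a))/\om(S(a))^{1+\frac sp-\frac sq}\lesssim\|G_{\mu,s}^v\|^{s\cdot q/(s\cdot\,?)}$ — after taking the $s/q$ power and rearranging, $\sup_a\muov(S(a))/\om(S(a))^{1+s/p-s/q}\lesssim\|G_{\mu,s}^v\|^s_{A^p_\om\to L^q_\om}$, which is exactly the claimed lower bound.

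The main obstacle I anticipate is the exponent bookkeeping in the upper bound: arranging the two-fold Hölder application (once inside each $\Gamma(z)$ to peel off an $N(f)$ power, once in the outer integral over $\D$ to recombine $\|N(f)\|_{L^p_\om}$ with a Carleson embedding at exponent $p+s-\frac{ps}{q}$) so that all conjugate exponents are positive and finite, which is precisely where the hypothesis $1+\frac sp-\frac sq>0$ is consumed; a secondary nuisance is handling the origin and the set $\{v(T(z))=0\}$, both excluded by hypothesis from carrying mass, so the averaging identity and the test-function estimates are unaffected there. I would also need the geometric fact — used implicitly above and provable as in the proof of Theorem~\ref{Theorem:normscm}(iii) — that $\{z:a\in\Gamma(z)\}$ has $\om$-measure comparable to $\om(S(a))$, so that the lower bound integral is not degenerate.
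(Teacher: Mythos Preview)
Your proposal has genuine gaps in both directions, and the relation between $q$ and $s$ matters more than you acknowledge.

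\textbf{Sufficiency.} The peeling step $G_{\mu,s}^v(f)(z)^s\le N(f)(z)^{s-\delta}\int_{\Gamma(z)}|f(\z)|^\delta\,d\mu(\z)/v(T(\z))$ requires $s-\delta\ge0$; with your choice $\delta=p+s-\frac{ps}{q}$ this forces $q\le s$. When $q>s$ one has $\delta>s$ and the inequality goes the wrong way, so the route collapses. The paper treats $q>s$ by duality in $L^{q/s}_\om$: write $\|G_{\mu,s}^v(f)\|^s_{L^q_\om}=\sup_{\|h\|_{L^{(q/s)'}_\om}\le1}\int_\D|h|\,(G_{\mu,s}^v(f))^s\,\om$, apply Fubini to get $\int_\D|f(\z)|^s\bigl(\frac1{\om(T(\z))}\int_{T(\z)}|h|\om\bigr)\,d\muov(\z)$, bound the average by $M_\om(h)(\z)$, and invoke Theorem~\ref{co:maxbou} for $M_\om:L^{(q/s)'}_\om\to L^{(t/s)'}_{\muov}$ with $t=p+s-ps/q$. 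Your $N(f)$-peeling \emph{is} exactly the paper's argument for $q<s$, so you have that half of the sufficiency; the $q>s$ half needs the maximal-function machinery, not just exponent bookkeeping.

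\textbf{Necessity.} The pointwise lower bound you assert---that $G_{\mu,s}^v(F_{a,p})(z)^s\gtrsim\muov(T(a))$ on a set of $\om$-measure $\gtrsim\om(S(a))$---is false for general $\muov$. Take $v=\om$ and $\mu=\delta_b$ with $b\in S(a)$ close to $\T$: then $G_{\mu,s}^v(F_{a,p})$ is supported on $T(b)$, whose $\om$-measure $\om(T(b))\asymp\om(S(b))$ can be arbitrarily small compared to $\om(S(a))$. The geometric fact you cite (that $\{z:a\in\Gamma(z)\}$ has $\om$-measure $\asymp\om(S(a))$) is true but does not help, because $\Gamma(z)$ never contains the part of $S(a)$ with $|\z|>|z|$. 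The paper avoids pointwise estimates entirely. For $q>s$ it inserts a \emph{second} test function $F_{a,(q/s)'}$ via $\om(T(z))\asymp\int_{T(z)}|F_{a,(q/s)'}|\,\om$ for $z\in S(a)$, then Fubini and H\"older give $\muov(S(a))\lesssim\|F_{a,(q/s)'}\|_{A^{(q/s)'}_\om}\|G_{\mu,s}^v(F_{a,p})\|^s_{L^q_\om}$ directly. For $q<s$ the necessity is more delicate still: the paper uses a H\"older splitting with auxiliary exponents $\alpha>\beta>1$, $\beta/\alpha=q/s$, a duality estimate on $\|G_{\mu,1}^v(\chi_{S(a)})\|_{L^{\beta'/\alpha'}_\om}$ involving $M_\om$, and then a bootstrap via the truncations $\mu_r=\chi_{D(0,r)}\mu$ to close an implicit inequality of the form \eqref{eq:1-s} into the Carleson bound.
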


\begin{proof}
Let first $q>s$. Assume that $G_{\mu,s}^v:A^p_\omega\to L^q_\omega$ is bounded. Let $a\in\D$ and choose $\gamma=\gamma(p,q,s)$ sufficiently large so that  $\|F_{a,p}\|_{A^p_\omega}^p\asymp\omega(S(a))$ and $\|F_{a,(\frac qs)'}\|_{A^{(\frac qs)'}_\omega}^{(\frac qs)'}\asymp\omega(S(a))$. Then Fubini's theorem and H\"older's inequality yield
    \begin{equation*}
    \begin{split}
    \muov(S(a))&\asymp \int_{S(a)} |F_{a,p}(z)|^s\,d\muov(z)\\
    &\asymp\int_{S(a)}|F_{a,p}(z)|^s\left(\frac{1}{v(T(z))}\int_{T(z)}|F_{a,(\frac qs)'}(\z)|\omega(\z)\,dA(\z)\right)d\mu(z)\\
    &\lesssim\int_\D|F_{a,(\frac qs)'}(\z)|\left(\int_{\Gamma(\z)}|F_{a,p}(z)|^s\frac{d\mu(z)}{v(T(z))}\right)\omega(\z)\,dA(\z)\\
    &\le\|F_{a,(\frac qs)'}\|_{A^{(\frac qs)'}_\omega}\|G_{\mu,s}^v (F_{a,p})\|^s_{L^{q}_\omega}
    \le\|F_{a,(\frac qs)'}\|_{A^{(\frac qs)'}_\omega}\|G_{\mu,s}^v\|^s_{A^p_\omega\to L^{q}_\omega}\|F_{a,p}\|^s_{A^p_\omega}\\
    &\asymp\|G_{\mu,s}^v\|^s_{A^p_\omega\to L^{q}_\omega}\omega(S(a))^{\frac{1}{(\frac qs)'}} \omega(S(a))^{\frac{s}{p}}
    \asymp\|G_{\mu,s}^v\|^s_{A^p_\omega\to L^{q}_\omega} \omega(S(a))^{1+\frac{s}{p}-\frac{s}{q}}.
    \end{split}
    \end{equation*}
Hence
    $
    \sup_{a\in\D}\frac{\muov(S(a))}{\omega(S(a))^{1+\frac{s}{p}-\frac{s}{q}}}\lesssim \|G_{\mu,s}^v\|^s_{A^p_\omega\to L^{q}_\omega}
    $
and $\muov$ is a $\left(p+s-\frac{ps}{q}\right)$-Carleson measure for $A^p_\omega$ by Theorem~\ref{Theorem:normscm}.

Conversely, let $q>s$ and $\muov$ be a $\left(p+s-\frac{ps}{q}\right)$-Carleson measure for $A^p_\omega$. Write $t=t(p,q,s)=p+s-\frac{ps}{q}>s$ for short. Since $\frac{t}{p}=(\frac{t}{s})'/(\frac{q}{s})'$, \cite[Theorem~3]{JAJRTent} shows that $M_\omega:L^{(\frac{q}{s})'}_\omega \to L^{(\frac{t}{s})'}_{\muov}$ is bounded with $\|M_\om\|^{(\frac{t}{s})'}_{L^{(\frac{q}{s})'}_\omega\to L^{(\frac{t}{s})'}_{\muov}}\asymp \sup_{a\in\D}\frac{\muov(S(a))}{\omega(S(a))^{t/p}}$.
 This together with Theorem~\ref{Theorem:normscm}, Fubini's theorem and H\"older's inequality give
    \begin{equation*}
    \begin{split}
    \|G_{\mu,s}^v(f)\|^s_{L^{q}_\omega}&=\sup_{\|h\|_{L^{(\frac{q}{s})'}_\omega}\le1}\int_\D|h(z)|
    \left(\int_{\Gamma(z)}|f(\z)|^s\frac{d\muov (\z)}{\omega(T(\z))}\right)\omega(z)\,dA(z)\\
    &=\sup_{\| h\|_{L^{(\frac{q}{s})'}_\omega}\le1}\int_\D|f(\z)|^s\left(\frac{1}{\omega(T(\z))}\int_{T(\z)}|h(z)|\omega(z)\,dA(z)\right)d\muov(\z)\\
    &\lesssim\sup_{\| h\|_{L^{(\frac{q}{s})'}_\omega}\le1}\int_\D|f(\z)|^sM_\omega(h)(\z)\,d\muov(\z)
    \le\sup_{\|h\|_{L^{(\frac{q}{s})'}_\omega}\le1}\|f\|^s_{L^{t}_{\muov}}\| M_\omega(h)\|_{L^{(\frac{t}{s})'}_{\muov}}\\
    &\le\sup_{\|h\|_{L^{(\frac{q}{s})'}_\omega}\le1}\|I_d\|^s_{A^p_\omega\to L^{t}_{\muov}}\|f\|^s_{A^{p}_\omega}
    \|M_\om\|_{L^{(\frac{q}{s})'}_\omega\to L^{(\frac{t}{s})'}_{\muov}}\|h\|_{L^{(\frac{q}{s})'}_\omega}\\
    &\asymp\left(\sup_{a\in\D}\frac{\muov(S(a))}{\omega(S(a))^{1+\frac{s}{p}-\frac{s}{q}}}\right)^{\frac{s}t+\frac1{(\frac{t}{s})'}}\|f\|^s_{A^{p}_\omega},
    \end{split}
    \end{equation*}
and hence $G_{\mu,s}^v:A^p_\omega\to L^q_\omega$ is bounded and $\|G_{\mu,s}^v(f)\|^s_{L^{q}_\omega}\lesssim \sup_{a\in\D}\frac{\muov(S(a))}{\omega(S(a))^{1+\frac{s}{p}-\frac{s}{q}}}$.

Since the assertion is valid for $q=s$ by \eqref{Eq:case-q=s}, it remains to consider the case $q<s$. Let first $\muov$ be a $\left(p+s-\frac{ps}{q}\right)$-Carleson measure for $A^p_\omega$, and let $0<x<s$. H\"older's inequality and Fubini's theorem yield
    \begin{equation*}
    \begin{split}
    \|G_{\mu,s}^v(f)\|^q_{L^{q}_\omega}
    &=\int_\D\left(\int_{\Gamma(z)}|f(\z)|^{x+s-x}\frac{d\muov(\z)}{\omega(T(\z))}\right)^\frac{q}{s} \omega(z)\,dA(z)\\
    &\le\int_\D N(f)(z)^{\frac{qx}{s}}\left(\int_{\Gamma(z)}|f(\z)|^{s-x}\frac{d\muov(\z)}{\omega(T(\z))}\right)^\frac{q}{s}\omega(z)\,dA(z)\\
    &\le\left(\int_\D N(f)(z)^{\frac{qx}{s-q}}\omega(z)\,dA(z)\right)^{1-\frac{q}{s}}
    \left(\int_\D\int_{\Gamma(z)}|f(\z)|^{s-x}\frac{d\muov(\z)}{\omega(T(\z))}\omega(z)\,dA(z)\right)^\frac{q}{s}\\
    &\lesssim\|N(f)\|^{\frac{qx}{s}}_{L_\omega^{\frac{qx}{s-q}}}\left(\int_\D|f(\z)|^{s-x}\,d\muov (\z)\right)^\frac{q}{s}
    \asymp\|N(f)\|^{\frac{qx}{s}}_{L_\omega^{\frac{qx}{s-q}}}\|f\|^{\frac{q(s-x)}{s}}_{L_{\muov}^{(s-x)}}.
    \end{split}
    \end{equation*}
Take $x=\frac{p(s-q)}{q}<s$ so that $s-x=s+p-\frac{ps}{q}$. Then the estimates above together with \cite[Lemma 4.4]{PelRat} and Theorem~\ref{Theorem:normscm} give
    \begin{equation*}
    \begin{split}
    \|G_{\mu,s}^v(f)\|^q_{L^{q}_\omega}
    \lesssim\|f\|^{\frac{qx}{s}}_{A_\omega^{p}}\|I_d\|^{\frac{q(s-x)}{s}}_{A_\omega^{p}\to L_{\muov}^{s+p-\frac{ps}{q}}}\|f\|^{\frac{q(s-x)}{s}}_{A_\omega^{p}}
    \asymp\left(\sup_{a\in\D}\frac{\muov(S(a))}{\omega(S(a))^{1+\frac{s}{p}-\frac{s}{q}}}\right)^\frac{q}{s}\|f\|^{q}_{A_\omega^{p}},
    \end{split}
    \end{equation*}
and hence $G_{\mu,s}^v:A^p_\omega\to L^q_\omega$ is bounded with $\|G_\mu^v\|^s_{A^p_\omega\to L^{q}_\omega}\lesssim \sup_{a\in\D}\frac{\muov(S(a))}{\omega(S(a))^{1+\frac{s}{p}-\frac{s}{q}}}$.

Conversely, let $q<s$ and $G_{\mu,s}^v:A^p_\omega\to L^q_\omega$ be bounded. Choose $\alpha>\beta>1$ such that $\frac{\beta}{\alpha}=\frac{q}{s}$. Fubini's theorem and H\"older's inequality  yield
    \begin{equation}\label{1-s}
    \begin{split}
    \muov(S(a))&=\int_\D\chi_{S(a)}(z)\,d\muov(z)
    \asymp\int_\D\chi_{S(a)}(z)|F_{a,p}(z)|^s\frac{1}{v(T(z))}\int_{T(z)}\omega(\z)\,dA(\z)\,d\mu(z)\\
    &=\int_\D\left(\int_{\Gamma(\z)}\chi_{S(a)}(z)|F_{a,p}(z)|^s\frac{ d\mu (z)}{v(T(z))}\right)^{\frac{1}{\alpha}+\frac{1}{\alpha'}}\omega(\z)\,dA(\z)\\
    &\le\left(\int_\D \left(\int_{\Gamma(\z)}|F_{a,p}(z)|^s\frac{d\mu(z)}{v(T(z))}\right)^{\frac{\beta}{\alpha}}\omega(\z)\,dA(\z)\right)^{\frac{1}{\beta}}\\
    &\quad\cdot\left(\int_\D\left(\int_{\Gamma(\z)}\chi_{S(a)}(z)\frac{d\mu(z)}{v(T(z))}\right)^{\frac{\beta'}{\alpha'}}
    \omega(\z)\,dA(\z)\right)^{\frac{1}{\beta'}} \\
    &=\|G_{\mu,s}^v(F_{a,p})\|^{\frac{q}{\beta}}_{L^{q}_\omega}\|G_{\mu,1}^v(\chi_{S(a)})\|^{\frac{1}{\alpha'}}_{L^{\frac{\beta'}{\alpha'}}_\omega}.
    \end{split}
    \end{equation}
Now $\frac{\beta'}{\alpha'}>1$ because $\beta<\alpha$, and hence $\|G_{\mu,1}^v( \chi_{S(a)})\|_{L^{\frac{\beta'}{\alpha'}}_\omega}$ can be estimated by duality arguments. Namely, since $\left(\frac{\beta'}{\alpha'}\right)'=\frac{\beta(\alpha-1)}{\alpha-\beta}$, Fubini's theorem, H\"older's inequality and \cite[Theorem~3]{JAJRTent} give
    \begin{equation}\label{1-s'}
    \begin{split}
    \|G_{\mu,1}^v(\chi_{S(a)})\|_{L^{\frac{\beta'}{\alpha'}}_\omega}
    &=\sup_{\|h\|_{L^{\frac{\beta(\alpha-1)}{\alpha-\beta}}_\omega}\le1}\int_\D|h(\z)|G_{\mu,1}^v(\chi_{S(a)})(\z) \omega(\z)\,dA(\z)\\
    &=\sup_{\|h\|_{L^{\frac{\beta(\alpha-1)}{\alpha-\beta}}_\omega}\le1}\int_\D|h(\z)|\left(\int_{\Gamma(\z)}\chi_{S(a)}(z)\frac{d\mu (z)}{v(T(z))} \right) \omega(\z)\,dA(\z)\\
    &=\sup_{\|h\|_{L^{\frac{\beta(\alpha-1)}{\alpha-\beta}}_\omega}\le1}\int_\D\chi_{S(a)}(z)
    \left(\frac{1}{v(T(z))}\int_{T(z)}|h(\z)|\omega(\z)\,dA(\z)\right)d\mu(z)\\
    &=\sup_{\|h\|_{L^{\frac{\beta(\alpha-1)}{\alpha-\beta}}_\omega}\le1}\int_\D\chi_{S(a)}(z)
    \left(\frac{1}{\om(T(z))}\int_{T(z)}|h(\z)|\omega(\z)\,dA(\z)\right)d\muov(z)\\
    &\lesssim\sup_{\|h\|_{L^{\frac{\beta(\alpha-1)}{\alpha-\beta}}_\omega}\le1}\int_\D\chi_{S(a)}(z)M_\omega(h)(z)\,d\muov(z)\\
    &\le\sup_{\|h\|_{L^{\frac{\beta(\alpha-1)}{\alpha-\beta}}_\omega}\le1}\left(\int_\D\chi_{S(a)}(z)\,d\muov(z)\right)^{\frac{\alpha'}{\beta'}}\\
    &\quad\cdot\left(\int_\D M_\omega(h)(z)^\frac{\beta(\alpha-1)}{\alpha-\beta}\chi_{S(a)}(z)\,d\muov(z)\right)^{1-\frac{\alpha'}{\beta'}}\\
    &\lesssim\muov(S(a))^{\frac{\alpha'}{\beta'}} \left(\sup_{z\in \D} \frac{\muov(S(z)\cap S(a))}{\omega(S(z))} \right)^{1-\frac{\alpha'}{\beta'}},\quad a\in\D.
    \end{split}
    \end{equation}
By combining this with \eqref{1-s} and using the norm estimate $\|F_{a,p}\|_{A^p_\omega}^p\asymp\omega(S(a))$, we deduce
    \begin{equation*}
    \begin{split}
    \muov (S(a))
    &\lesssim\|G_{\mu,s}^v(F_{a,p})\|^{\frac{q}{\beta}}_{L^{q}_\omega}
    \|G_{\mu,1}^v(\chi_{S(a)})\|^{\frac{1}{\alpha'}}_{L^{\frac{\beta'}{\alpha'}}_\omega}\\
    &\lesssim\|G_{\mu,s}^v\|^{\frac{q}{\beta}}_{A^p_\omega\to L^{q}_\omega}\omega(S(a))^{\frac{q}{p\beta}} \muov(S(a))^{\frac{1}{\beta'}} \left(\sup_{z\in \D} \frac{\muov(S(z)\cap S(a))}{\omega(S(z))} \right)^{\frac{1}{\alpha'}-\frac{1}{\beta'}},
    \end{split}
    \end{equation*}
which yields
    \begin{equation}
    \begin{split}\label{eq:1-s}
    \muov (S(a))^{\frac{1}{\beta}}\omega(S(a))^{-\frac{q}{p\b}}
    &\lesssim\|G_{\mu,s}^v\|^{\frac{q}{\b}}_{A^p_\omega\to L^{q}_\omega} \left(\sup_{z\in \D} \frac{\muov(S(z)\cap S(a))}{\omega(S(z))} \right)^{\frac{1}{\alpha'}-\frac{1}{\beta'}},\quad a\in\D.
    \end{split}
    \end{equation}

Define $d\mu_r(z)=\chi_{D(0,r)}(z)d\mu(z)$ for $0<r<1$. Then
    \begin{equation}
    \begin{split}\label{eq:2-s}
    \left(G_{\mu_r,s}^v(f)(z)\right)^s
    &=\int_{\Gamma(z)}|f(\z)|^s\frac{d\mu_r (\z)}{v(T(\z))}
    =\int_{\Gamma(z)\cap D(0,r)}|f(\z)|^s\frac{d\mu (\z)}{v(T(\z))}\\
    &\le\int_{\Gamma(z)}|f(\z)|^s\frac{d\mu (\z)}{v(T(\z))}
    =\left(G_{\mu,s}^v(f)(z)\right)^s,\quad z\in\D\setminus\{0\},
    \end{split}
    \end{equation}
and hence $\|G_{\mu_r,s}^v\|_{A^p_\omega\to L^{q}_\omega}\le\|G_{\mu,s}^v\|_{A^p_\omega\to L^{q}_\omega}$ for all $0<r<1$.

If $q=p$, then \eqref{eq:1-s} applied to $\mu_r$ implies
    \begin{equation*}
    \begin{split}
    (\muov)_r(S(a))^{\frac{1}{\beta}}\omega(S(a))^{-\frac{1}{\beta}}
    &\lesssim\|G_{\mu_r,s}^v\|^{\frac{p}{\beta}}_{A^p_\omega\to L^{p}_\omega}\left(\sup_{z\in\D}\frac{(\muov)_r(S(z))}{\omega(S(z))} \right)^{\frac{1}{\alpha'}-\frac{1}{\beta'}},\quad a\in\D,
    \end{split}
    \end{equation*}
and hence
    \begin{equation*}
    \begin{split}
    \left(\sup_{z\in\D}\frac{(\muov)_r(S(z))}{\omega(S(z))} \right)^{\frac{1}{\beta}}
    &\lesssim\|G_{\mu,s}^v\|^{\frac{p}{\beta}}_{A^p_\omega\to L^{p}_\omega}\left(\sup_{z\in\D}\frac{(\muov)_r(S(z))}{\omega(S(z))}\right)^{\frac{1}{\alpha'}-\frac{1}{\beta'}}.
    \end{split}
    \end{equation*}
Consequently,
    \begin{equation*}
    \begin{split}
    \sup_{z\in\D, r\in (0,1)}\frac{(\muov)_r(S(z))}{\omega(S(z))}
    &\lesssim\|G_{\mu,s}^v\|^s_{A^p_\omega\to L^{p}_\omega}.
    \end{split}
    \end{equation*}
So Fatou's lemma and Theorem~\ref{Theorem:normscm} show that $\muov$ is a $p$-Carleson measure for $A^p_\omega$ with
    $$
    \|I_d\|^p_{A^p_\om\to L^p_{\muov}}
    \asymp\sup_{a\in\D}\frac{\muov(S(a))}{\omega(S(a))}
    \lesssim\|G_{\mu,s}^v\|^s_{A^p_\omega\to L^{p}_\omega}.
    $$

If $q>p$, then applying \eqref{eq:1-s}  to $\mu_r$ and bearing in mind that $\om$ is radial
    \begin{equation*}
    \begin{split}
    \frac{(\muov)_r(S(a))}{\omega(S(a))^{1+\frac{s}{p}-\frac{s}{q}}}
    &\lesssim\|G_{\mu_r,s}^v\|^q_{A^p_\omega\to L^{q}_\omega}\omega(S(a))^{\frac{q}{p}-1-\frac{s}{p}+\frac{s}{q}}
    \left(\sup_{z\in\D}\frac{(\muov)_r(S(z)\cap S(a))}{\omega(S(z))}\right)^{\frac{\beta}{\alpha'}-\frac{\beta}{\beta'}}\\
    &=\|G_{\mu_r,s}^v\|^q_{A^p_\omega\to L^{q}_\omega}\omega(S(a))^{\frac{-(q-p)(s-q)}{pq}}
    \left(\sup_{z:S(z)\subset S(a)}\frac{(\muov)_r(S(z)\cap S(a))}{\omega(S(z))}\right)^{1-\frac{q}{s}}\\
    &=\|G_{\mu_r,s}^v\|^q_{A^p_\omega\to L^{q}_\omega}
    \left(\sup_{z:S(z)\subset S(a)}\frac{(\muov)_r(S(z)\cap S(a))}{\omega(S(a))^{s\frac{q-p}{pq}}\omega(S(z))}\right)^{1-\frac{q}{s}}\\
    &\le\|G_{\mu_r,s}^v\|^q_{A^p_\omega \to L^{q}_\omega}
    \left(\sup_{z:S(z)\subset S(a)}\frac{(\muov)_r(S(z))}{\omega(S(z))^{1+\frac{s}{p}-\frac{s}{q}}}\right)^{1-\frac{q}{s}},\quad a\in\D.
    \end{split}
    \end{equation*}
Consequently,
    \begin{equation*}
    \begin{split}
    \sup_{a\in\D}\left(\frac{(\muov)_r(S(a))}{\omega(S(a))^{1+\frac{s}{p}-\frac{s}{q}}}\right)
    &\lesssim\|G_{\mu,s}^v\|^q_{A^p_\omega\to L^{q}_\omega}
    \left(\sup_{a\in\D}\sup_{z:S(z)\subset S(a)}\frac{(\muov)_r(S(z))}{\omega(S(z))^{1+\frac{s}{p}-\frac{s}{q}}}\right)^{1-\frac{q}{s}}\\
    &=\|G_{\mu,s}^v\|^q_{A^p_\omega\to L^{q}_\omega}\left(\sup_{a\in \D}\frac{(\muov)_r(S(a))}{\omega(S(a))^{1+\frac{s}{p}-\frac{s}{q}}} \right)^{1-\frac{q}{s}},
    \end{split}
    \end{equation*}
and thus
    \begin{equation*}
    \begin{split}
    \sup_{a\in\D,\,r\in(0,1)}\frac{(\muov)_r(S(a))}{\omega(S(a))^{1+\frac{s}{p}-\frac{s}{q}}}
    &\lesssim\|G_{\mu,s}^v\|^s_{A^p_\omega\to L^{q}_\omega}.
    \end{split}
    \end{equation*}
Fatou's lemma and Theorem~\ref{Theorem:normscm} show that $\muov$ is a $\left(p+s-\frac{ps}{q}\right)$-Carleson measure for~$A^p_\omega$ with the corresponding inequality of norms.
\end{proof}

\begin{theorem}\label{fukifuki}
Let $0<q<p<\infty$ and $0<s<\infty$ such that $1+\frac{s}p-\frac{s}{q}>0$, $\omega\in\DD$ and let $\mu,v$ be  positive Borel measures on $\D$
such that $\mu\left(\{z\in\D: v(T(z))=0\}\right)=0=\mu(\{0\})$. Then $G_{\mu,s}^v:A^p_\omega\to L^q_\omega$ is bounded if and only if $\muov$ is a $p\left(1+\frac{s}p-\frac{s}{q}\right)$-Carleson measure for $A^p_\omega$. Moreover,
    \begin{equation*}
    \begin{split}
    \|G_{\mu,s}^v\|^s_{A^p_\omega\to L^{q}_\omega}
    %+\mu(\{0\})
    \asymp\|I_d\|^{p+s-\frac{ps}{q}}_{A^p_\omega\to L^{p+s-\frac{ps}{q}}_{\muov}}
    \asymp\|B_\mu^v\|_{L^{\frac{qp}{s(p-q)}}_\omega}.
    %+\mu(\{0\}).
    \end{split}
    \end{equation*}
\end{theorem}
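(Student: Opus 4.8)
The plan is to reduce the statement to Theorem~\ref{Theorem:normscm}. Put $t=t(p,q,s)=p+s-\frac{ps}{q}$; the hypothesis $1+\frac sp-\frac sq>0$ is exactly $t>0$, and $q<p$ forces $t<p$. Since $\frac{d\mu}{v(T(\z))}=\frac{d\muov}{\om(T(\z))}$ one has $B_\mu^v(z)=\int_{\Gamma(z)}\frac{d\mu(\z)}{v(T(\z))}=\int_{\Gamma(z)}\frac{d\muov(\z)}{\om(T(\z))}=B_{\muov}^\om(z)$, and an arithmetic check gives $\frac p{p-t}=\frac{qp}{s(p-q)}$. Because $\muov(\{0\})=0$, applying Theorem~\ref{Theorem:normscm}(iii) to $\muov$ (with $t$ in the role of its ``$q$'') yields $\|I_d\|^{t}_{A^p_\om\to L^{t}_{\muov}}\asymp\|B_\mu^v\|_{L^{qp/(s(p-q))}_\om}$ right away. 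So the only thing left is the first displayed equivalence, i.e.\ that $G_{\mu,s}^v:A^p_\om\to L^q_\om$ is bounded precisely when $\muov$ is $t$-Carleson for $A^p_\om$, together with $\|G_{\mu,s}^v\|^s_{A^p_\om\to L^q_\om}\asymp\|I_d\|^{t}_{A^p_\om\to L^{t}_{\muov}}$.

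For the ``if'' half, assume $\muov$ is $t$-Carleson. The pointwise estimate $G_{\mu,s}^v(f)(z)^s\le N(f)(z)^sB_\mu^v(z)$, H\"older's inequality with the conjugate pair $\bigl(\frac pq,\frac p{p-q}\bigr)$ (legitimate as $q<p$), the identity $\frac qs\cdot\frac p{p-q}=\frac p{p-t}$, the norm comparison $\|N(f)\|_{L^p_\om}\asymp\|f\|_{A^p_\om}$ of \cite[Lemma~4.4]{PelRat}, and Theorem~\ref{Theorem:normscm}(iii) give
$$\|G_{\mu,s}^v(f)\|^s_{L^q_\om}\le\|N(f)\|^s_{L^p_\om}\,\|B_\mu^v\|_{L^{qp/(s(p-q))}_\om}\lesssim\|f\|^s_{A^p_\om}\,\|I_d\|^{t}_{A^p_\om\to L^{t}_{\muov}},$$
so $G_{\mu,s}^v:A^p_\om\to L^q_\om$ is bounded with the stated bound.

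For the ``only if'' half, suppose $G_{\mu,s}^v:A^p_\om\to L^q_\om$ is bounded. If $q=s$, then $t=q=s$ and the claim is~\eqref{Eq:case-q=s}. If $q>s$, then $t>s$ and the bound $|f(\z)|^t\le|f(\z)|^sN(f)(z)^{t-s}$ for $\z\in\Gamma(z)$ gives $G_{\mu,t}^v(f)(z)^t\le N(f)(z)^{t-s}G_{\mu,s}^v(f)(z)^s$; integrating against $\om\,dA$, using H\"older with the conjugate pair $\bigl(\frac p{t-s},\frac qs\bigr)$ (note $\frac{ps}{p-t+s}=q$), the comparison for $N$, and then~\eqref{Eq:case-q=s} with $t$ in place of $s$ (so $\|G_{\mu,t}^v(f)\|^t_{L^t_\om}=\|f\|^t_{L^t_{\muov}}$, as $\muov(\{0\})=0$), one obtains $\|I_d\|^{t}_{A^p_\om\to L^{t}_{\muov}}\lesssim\|G_{\mu,s}^v\|^s_{A^p_\om\to L^q_\om}$, hence $\muov$ is $t$-Carleson by Theorem~\ref{Theorem:normscm}. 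The remaining range $0<q<s$ (whence $0<t<q<s$) is the core of the theorem; here I would mimic the Calder\'on--Zygmund step in the proof of Theorem~\ref{Theorem:normscm}(iii). Fix a $\delta$-lattice $\{z_k\}$ and the bounded operator $S_\lambda:T^p_2(\{z_k\},\om)\to A^p_\om$ of \cite[Lemma~6]{JAJRTent}; for a finitely supported $\{b_k\}$, feed the Rademacher-randomized sum $S_\lambda(\{r_k(t)b_k\})$ into $G_{\mu,s}^v$, integrate over $t\in[0,1]$, and apply Khinchine's inequality together with its Kahane-type vector-valued version (in the quasi-Banach space $L^s$ of the restriction to $\Gamma(z)$ of $d\muov/\om(T(\cdot))$) and $|h_\lambda(z_k,\z)|\gtrsim\chi_{T(z_k)}(\z)$ to reach
\begin{equation*}
\int_\D\left(\int_{\Gamma(z)}\Bigl(\sum_k|b_k|^2\chi_{T(z_k)}(\z)\Bigr)^{\frac s2}\frac{d\muov(\z)}{\om(T(\z))}\right)^{\frac qs}\om(z)\,dA(z)\lesssim\|G_{\mu,s}^v\|^q_{A^p_\om\to L^q_\om}\,\|\{b_k\}\|^q_{T^p_2(\{z_k\},\om)}.
\end{equation*}
Taking the $z_k$ to be the vertices of the stopping tents of a dyadic decomposition of $\D$ relative to $\muov$, disjointifying by the sets $G(T)$ and choosing the stopping coefficients constant on each generation (now tuned to the exponent $\frac p{p-t}$ so both sides telescope with matching powers of $2$), this should extract $\|\widetilde M^d_\om(\muov)\|_{L^{p/(p-t)}_\om}\lesssim\|G_{\mu,s}^v\|^s_{A^p_\om\to L^q_\om}$, which by~\eqref{eq:pointmax} and Theorem~\ref{Theorem:normscm}(iii) says $\muov$ is $t$-Carleson. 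The finite-support restriction is removed by running everything for $\mu_r=\chi_{D(0,r)}\mu$ and letting $r\to1^-$ with Fatou's lemma.

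The hard part will be this last stopping-time extraction when $q<s$: unlike in Theorem~\ref{Theorem:normscm}(iii), the square function is now wrapped inside the extra cone average $\int_{\Gamma(z)}(\cdot)^{s/2}\,d\muov/\om(T(\cdot))$, so after disjointification one is confronted with $\sum_k|b_k|^s\int_{\Gamma(z)\cap G(T_k)}\frac{d\muov(\z)}{\om(T(\z))}$ rather than a clean $\sum_k|b_k|^q\muov(G(T_k))$, and one must verify that the telescoping of stopping levels still yields a genuine gain once the outer $q/s$-th power is taken. Keeping track of how the factor $\om(T(\z))^{-1}$ redistributes the mass of $\muov$ over each stopping tent, and matching the powers of $2$ on the two sides of the randomized inequality, is where the argument has to be carried out with some care.
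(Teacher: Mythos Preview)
Your reductions to Theorem~\ref{Theorem:normscm}, the ``if'' direction, and the ``only if'' direction in the cases $q=s$ and $q>s$ are correct and match the paper's argument essentially line for line.

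The gap is the case $q<s$ in the ``only if'' direction. The Calder\'on--Zygmund/Rademacher strategy you sketch is not the route the paper takes, and---as you yourself flag---it runs into a real obstruction: after disjointification one is left with
\[
\left(\sum_T |b_T|^s\int_{\Gamma(z)\cap G(T)}\frac{d\muov(\z)}{\om(T(\z))}\right)^{q/s},
\]
and with $q/s<1$ there is no clean way to decouple this into a telescoping sum over stopping levels; the outer power is concave, so the passage from the square function to the level-set estimate used in Theorem~\ref{Theorem:normscm}(iii) does not survive the extra cone average. I do not see how to close this without substantial new input, and you have not supplied one.

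The paper's argument for $q<s$ is much shorter and avoids the stopping-time machinery altogether. Since $0<t<s$ and $q/t>1$ (check: $q-t=(p-q)(s-q)/q>0$), one starts from Fubini's theorem in the form
\[
\|f\|^t_{L^t_{\muov}}=\int_\D\left(\int_{\Gamma(\z)}|f(z)|^t\frac{d\mu(z)}{v(T(z))}\right)\om(\z)\,dA(\z),
\]
applies H\"older inside the cone with exponents $(s/t,(s/t)')$ to get the integrand dominated by $G_{\mu,s}^v(f)(\z)^t\,B_\mu^v(\z)^{1-t/s}$, and then applies H\"older outside with exponents $(q/t,(q/t)')$. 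A direct computation shows $(1-\tfrac ts)\cdot\tfrac{q}{q-t}=\tfrac{qp}{s(p-q)}$, so
\[
\|f\|^t_{L^t_{\muov}}\lesssim\|G_{\mu,s}^v(f)\|^t_{L^q_\om}\,\|B_\mu^v\|^{1-t/s}_{L^{qp/(s(p-q))}_\om}
\le\|G_{\mu,s}^v\|^t_{A^p_\om\to L^q_\om}\,\|B_\mu^v\|^{1-t/s}_{L^{qp/(s(p-q))}_\om}\,\|f\|^t_{A^p_\om}.
\]
Now comes the bootstrap: run this with $\mu_r=\chi_{D(0,r)}\mu$ in place of $\mu$ (so that $\|B_{\mu_r}^v\|_{L^{qp/(s(p-q))}_\om}<\infty$), invoke the equivalence $\|I_d\|^t_{A^p_\om\to L^t_{(\muov)_r}}\asymp\|B_{\mu_r}^v\|_{L^{qp/(s(p-q))}_\om}$ from Theorem~\ref{Theorem:normscm}(iii) that you already recorded, cancel the common power of $\|B_{\mu_r}^v\|$ from both sides, and let $r\to1^-$ via Fatou. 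This yields $\|B_\mu^v\|_{L^{qp/(s(p-q))}_\om}\lesssim\|G_{\mu,s}^v\|^s_{A^p_\om\to L^q_\om}$ directly, with no randomization or stopping times.
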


\begin{proof}
The equivalence $\|I_d\|^{p+s-\frac{ps}{q}}_{A^p_\omega\to L^{p+s-\frac{ps}{q}}_{\muov}}
    \asymp\|B_\mu^v\|_{L^{\frac{qp}{s(p-q)}}_\omega}$ follows from Theorem~\ref{Theorem:normscm}.
\par If $B_\mu^v\in L^{\frac{qp}{s(p-q)}}_\omega$, then H\"older's inequality and \cite[Lemma~4.4]{PelRat} give
    \begin{equation*}
    \begin{split}
    \|G_{\mu,s}^v(f)\|^q_{L^{q}_\omega}
    &=\int_\D\left(\int_{\Gamma(z)}|f(\z)|^s\frac{d\mu(\z)}{v(T(\z))}\right)^\frac{q}{s}\omega(z)\,dA(z)\\
    &\le\int_\D N(f)^q(z)\left(\int_{\Gamma(z)}\frac{d\mu(\z)}{v(T(\z))}\right)^\frac{q}{s}\omega(z)\,dA(z)\\
    &\le\|N(f) \|^q_{L^{p}_\omega}\|B_\mu^v\|^\frac{q}{s}_{L^{\frac{qp}{s(p-q)}}_\omega}
    \asymp\|f\|^q_{A^{p}_\omega}\|B_\mu^v\|^\frac{q}{s}_{L^{\frac{qp}{s(p-q)}}_\omega},
    \end{split}
    \end{equation*}
and hence $G_{\mu,s}^v:A^p_\omega\to L^q_\omega$ is bounded and $\|G_{\mu,s}^v\|^s_{A^p_\omega\to L^{q}_\omega}\lesssim\|B_\mu^v\|_{L^{\frac{qp}{p-q}}_\omega}$.

Assume now that $G_{\mu,s}^v:A^p_\omega\to L^q_\omega$ is bounded, and let first $q>s$ and write $t=t(p,q,s)=(p+s-\frac{ps}{q})=s+p(1-\frac{s}{q})$. It suffices to show that $\muov$ is a $t$-Carleson measure for $A^p_\omega$. Fubini's theorem, H\"older's inequality and \cite[Lemma~4.4]{PelRat} yield
    \begin{equation*}
    \begin{split}
    \|f\|_{L^t_{\muov}}^t
    &=\int_\D|f(z)|^t\left(\int_{T(z)}\om(\z)\,dA(\z)\right)\frac{d\mu(z)}{v(T(z))}\\
    &=\int_\D\left(\int_{\Gamma(\z)}|f(z)|^{s+p(1-\frac{s}{q})}\frac{d\mu(z)}{v(T(z))}\right)\om(\z)\,dA(\z)\\
    &\le\int_\D N(f)^{p(1-\frac{s}{q})}(\z)\left(\int_{\Gamma(\z)}|f(z)|^s\frac{d\mu(z)}{v(T(z))}\right)\om(\z)\,dA(\z)\\
    &\lesssim\|N(f)\|_{L^p_\om}^{p(1-\frac{s}{q})}\|G^v_{\mu,s}(f)\|^s_{L^q_\om}
    \asymp\|f\|_{A^p_\om}^{p(1-\frac{s}{q})}\|G^v_{\mu,s}(f)\|^s_{L^q_\om}\\
    &\le \|G^v_\mu\|^s_{A^p_\om\to L^q_\om}\|f\|_{A^p_\om}^t
    \end{split}
    \end{equation*}
and hence $I_d:\,A^p_\om\to L^t_{\muov}$ is bounded with
 $\|I_d\|^t_{A^p_\om\to L^t_{\muov}}\lesssim\|G^v_\mu\|^s_{A^p_\om\to L^q_\om}$.

If $q=s$, then $t=s$ and the result follows from \eqref{Eq:case-q=s}.
\par If $q<s$, then $t=p+s-\frac{ps}{q}<s$ and $\frac{q}{t}>1$ because $\min\{p,s\}>q$. Hence H\"older's inequality gives
    \begin{equation*}
    \begin{split}
    \|f\|^t_{L^t_{\muov}}
    &=\int_\D\left(\int_{\Gamma(\z)}|f(z)|^t\frac{d\mu(z)}{v(T(z))}\right)\omega(\z)\,dA(\z)\\
    &\lesssim\int_\D\left(\int_{\Gamma(\z)}|f(z)|^s\frac{d\mu(z)}{v(T(z))}\right)^\frac{t}{s}
    \left(\int_{\Gamma(\z)}\frac{d\mu(z)}{v(T(z))}\right)^{1-\frac{t}{s}}\omega(\z)\,dA(\z)\\
    &\le\left(\int_\D\left(\int_{\Gamma(\z)}|f(z)|^s\frac{d\mu(z)}{v(T(z))}\right)^\frac{q}{s}\omega(\z)\,dA(\z)\right)^\frac{t}{q} \\&\cdot\left(\int_\D\left(\int_{\Gamma(\z)}\frac{d\mu(z)}{v(T(z))}\right)^{\frac{s-t}{s}\frac{q}{q-t}}\omega(\z)\,dA(\z)\right)^\frac{q-t}{q}\\
    &=\|G_{\mu,s}^v(f)\|_{L^q_\omega}^t\|B_\mu^v\|_{L^\frac{qp}{s(p-q)}_\omega}^{1-\frac{t}{s}}\\
    &\le\|G_{\mu,s}^v\|_{A^p_\omega\to L^q_\omega}^t\|B_{\mu,s}^v\|_{L^\frac{qp}{s(p-q)}_\omega}^{1-\frac{t}{s}} \|f\|_{A^p_\omega}^t.
    \end{split}
    \end{equation*}
This applied to $\mu_r$ yields
    \begin{equation*}
    \begin{split}
    \left(\frac{\|f\|_{L^{t}_{(\muov)_r}}}{\| f \|_{A^p_\omega}}\right)^t
    \lesssim\|G_{\mu_r,s}^v\|_{A^p_\omega\to L^q_\omega}^t
    \|B_{\mu_r}^v\|_{L^{\frac{qp}{s(p-q)}}_\omega}^{1-\frac{t}{s}}  ,\quad f\in A^p_\omega,\quad f\not\equiv0,
    \end{split}
    \end{equation*}
and hence
    \begin{equation*}
    \begin{split}
    \|I_d\|_{ A^p_\omega\to L^t_{{(\muov)_r}}}^t
    \le\|G_{\mu_r,s}^v\|_{A^p_\omega\to L^q_\omega}^t\|B_{\mu_r}^v\|_{L^{\frac{qp}{s(p-q)}}_\omega}^{1-\frac{t}{s}}.
    \end{split}
    \end{equation*}
Since
    $
    \|I_d\|_{A^p_\omega\to L^t_{({\muov})_r}}^t\asymp\|B_{\mu_r}^v\|_{L^\frac{qp}{s(p-q)}_\omega}
    $
by Theorem~\ref{Theorem:normscm}, we deduce
    \begin{equation*}
    \begin{split}
    \|B_{\mu_r}^v\|_{L^\frac{qp}{s(p-q)}_\omega}
    \lesssim\| G_{\mu_r,s}^v \|_{ A^p_\omega \to L^q_\omega}^t\|B_{\mu_r}^v\|_{L^{\frac{qp}{s(p-q)}}_\omega}^{1-\frac{t}{s}},
    \end{split}
    \end{equation*}
and hence
    \begin{equation*}
    \begin{split}
    \|B_{\mu_r}^v\|_{L^\frac{qp}{s(p-q)}_\omega}
    \lesssim\|G_{\mu_r,s}^v\|^s_{A^p_\omega\to L^q_\omega}
    \le\|G_{\mu,s}^v\|^s_{A^p_\omega\to L^q_\omega},
    \end{split}
    \end{equation*}
which together with Fatou's lemma gives
    $$
    \|B_{\mu}^v\|_{L^\frac{qp}{s(p-q)}_\omega}
    \le\liminf_{r\to 1}\|B_{\mu_r}^v\|_{L^\frac{qp}{s(p-q)}_\omega}
    \lesssim\|G_{\mu,s}^v\|^s_{A^p_\omega\to L^q_\omega}.
    $$
This finishes the proof.
\end{proof}

\section{Compact operators $G_{\mu,s}^v:A^p_\omega\to L^q_\omega$}

In this section we prove Theorem~\ref{Theorem:main-s}(ii).

\begin{lemma}\label{technichal}
Let $\nu$ be a finite positive  Borel measure on $\D$ and $0<p<\infty$. If $\{\vp_n\}_{n=0}^\infty\subset L^p_\nu$ and $\vp\in  L^p_\nu$ satisfy
 $\lim_{n\to\infty}\|\vp_n\|_{L^p_\nu}=\|\vp\|_{L^p_\nu}$ and
$\lim_{n\to\infty}\vp_n(z)=\vp(z)$ $\nu$-a.e. on $\D$,
then $\lim_{n\to\infty}\|\vp_n-\vp\|_{L^p_\nu}=0$.
\end{lemma}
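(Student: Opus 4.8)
The plan is to deduce this from Fatou's lemma via the elementary inequality
$|a-b|^p\le c_p\left(|a|^p+|b|^p\right)$, valid for all $a,b\in\C$ with $c_p=\max\{1,2^{p-1}\}$: for $p\ge1$ this is convexity, and for $0<p<1$ it holds with $c_p=1$ by subadditivity of $t\mapsto t^p$ on $[0,\infty)$. This is the standard Riesz--Scheff\'e / Brezis--Lieb argument adapted to the exponent $p$.

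First I would set $g_n=c_p\left(|\vp_n|^p+|\vp|^p\right)-|\vp_n-\vp|^p$. By the inequality above, $g_n\ge0$ $\nu$-a.e., and each $g_n\in L^1_\nu$ since $\vp_n,\vp\in L^p_\nu$. From the pointwise hypothesis $\vp_n\to\vp$ $\nu$-a.e. we get $g_n\to 2c_p|\vp|^p$ $\nu$-a.e. Applying Fatou's lemma to the nonnegative sequence $\{g_n\}$ gives
\[
2c_p\int_\D|\vp|^p\,d\nu=\int_\D\liminf_{n\to\infty}g_n\,d\nu\le\liminf_{n\to\infty}\int_\D g_n\,d\nu.
\]

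Next I would expand the right-hand side. The norm hypothesis yields $\int_\D|\vp_n|^p\,d\nu\to\int_\D|\vp|^p\,d\nu$, and since $\vp\in L^p_\nu$ this common limit is finite, so the additive constant may be pulled out of the $\liminf$:
\[
\liminf_{n\to\infty}\int_\D g_n\,d\nu=2c_p\int_\D|\vp|^p\,d\nu-\limsup_{n\to\infty}\int_\D|\vp_n-\vp|^p\,d\nu.
\]
Combining the two displays and cancelling the finite quantity $2c_p\int_\D|\vp|^p\,d\nu$ leaves $\limsup_{n\to\infty}\int_\D|\vp_n-\vp|^p\,d\nu\le0$, hence $\|\vp_n-\vp\|_{L^p_\nu}^p\to0$, which is exactly the assertion.

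There is really no serious obstacle here; the only point requiring a word of care is that the split of the $\liminf$ into a constant minus a $\limsup$ is legitimate precisely because $\int_\D|\vp|^p\,d\nu<\infty$, so that $\{\int_\D|\vp_n|^p\,d\nu\}_n$ is a convergent real sequence. Finiteness of $\nu$ is not otherwise used, beyond guaranteeing that constant functions lie in $L^1_\nu$.
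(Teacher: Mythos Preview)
Your argument is correct and is precisely the standard Fatou-based proof (often called the Riesz--Scheff\'e lemma) that the paper has in mind when it refers to \cite[Lemma~1, p.~21]{DurenHp}. There is nothing to add; your remark that finiteness of $\nu$ is inessential is also accurate.
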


\begin{proof}
See the proof of \cite[Lemma~1~p.~21]{DurenHp}.
\end{proof}

\begin{theorem}\label{th:c1}
Let $0<p\le q<\infty$, $0<s<\infty$ and $\omega\in\DD$, and let $\mu,v$ be  positive Borel measures on $\D$
such that $\mu\left(\{z\in\D: v(T(z))=0\}\right)=0=\mu(\{0\})$. Then $G_{\mu,s}^v : A^p_\omega \to L^q_\omega$ is compact if and only if
$I_d:\,A^p_\omega\to L^{p+s-\frac{ps}{q}}_{\muov}$ is compact.
\end{theorem}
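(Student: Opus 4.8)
The plan is to prove the equivalence in two directions, using the norm comparison $\|G_{\mu,s}^v\|^s_{A^p_\omega\to L^q_\omega}\asymp\|I_d\|^{p+s-ps/q}_{A^p_\omega\to L^{p+s-ps/q}_{\muov}}$ from Theorem~\ref{th:casepleq} as the quantitative backbone, together with the characterization in Theorem~\ref{Theorem:normscm}(ii) that $I_d:A^p_\omega\to L^t_{\muov}$ is compact (with $t=p+s-ps/q$) if and only if $\lim_{|a|\to1^-}\muov(S(a))/\omega(S(a))^{t/p}=0$.

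For the direction ``$I_d$ compact $\Rightarrow$ $G_{\mu,s}^v$ compact'' I would argue by the usual normal families device. Take a bounded sequence $\{f_n\}$ in $A^p_\omega$; passing to a subsequence, $f_n\to f$ uniformly on compact subsets of $\D$ for some $f\in A^p_\omega$, and set $g_n=f_n-f$. One splits the measure $\mu$ as $\mu_r+(\mu-\mu_r)$ with $\mu_r=\chi_{D(0,r)}\mu$: on the tail part $\mu-\mu_r$ we use the boundedness estimates of Theorem~\ref{th:casepleq} applied to $(\mu-\mu_r)^v_\omega$, whose Carleson constant $\sup_a (\mu-\mu_r)^v_\omega(S(a))/\omega(S(a))^{t/p}$ tends to $0$ as $r\to1^-$ precisely because of the compactness condition \eqref{c1}; this makes $\|G_{\mu-\mu_r,s}^v(g_n)\|_{L^q_\omega}$ uniformly small. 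On the compact part $\mu_r$, the inequality $|g_n(\z)|$ is uniformly small on $\overline{D(0,r)}$ together with $\mu_r(\D)<\infty$ and $v(T(\z))>0$ $\mu$-a.e.\ (and bounded below on $\overline{D(0,r)}$ away from where it vanishes) forces $\|G_{\mu_r,s}^v(g_n)\|_{L^q_\omega}\to0$; here one uses that $G^v_{\mu_r,s}(g_n)(z)^s\le N(g_n)(z)^s\int_{\Gamma(z)}d\mu_r/v(T(\z))$ and that the last integral is an $L^{q/s}_\omega$-type function of $z$ controlled by $B^v_{\mu_r}$, or simply the reasoning already used at the end of the proof of Theorem~\ref{Theorem:normscm}(iii). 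Combining, $G_{\mu,s}^v(g_n)\to0$ in $L^q_\omega$, i.e.\ $G_{\mu,s}^v$ is compact.

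For the converse ``$G_{\mu,s}^v$ compact $\Rightarrow$ $I_d$ compact'' I would test against the normalized reproducing-type functions $F_{a,p}$, which satisfy $\|F_{a,p}\|_{A^p_\omega}^p\asymp\omega(S(a))$, $|F_{a,p}|\asymp1$ on $S(a)$, and $F_{a,p}\to0$ uniformly on compact subsets of $\D$ as $|a|\to1^-$. Set $h_a=F_{a,p}/\|F_{a,p}\|_{A^p_\omega}$; this is a bounded family in $A^p_\omega$ converging to $0$ locally uniformly, so compactness of $G_{\mu,s}^v$ gives $\|G_{\mu,s}^v(h_a)\|_{L^q_\omega}\to0$. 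Running the lower-bound computation from the proof of Theorem~\ref{th:casepleq} (the chain of Fubini plus Hölder against $F_{a,(q/s)'}$ in the case $q>s$, and the corresponding arguments in the cases $q=s$ and $q<s$) but now keeping track of the decay rather than just boundedness yields
\[
\frac{\muov(S(a))}{\omega(S(a))^{1+\frac sp-\frac sq}}\lesssim \|G_{\mu,s}^v(h_a)\|^s_{L^q_\omega}\longrightarrow0,\quad |a|\to1^-,
\]
which is exactly condition \eqref{c1} for the measure $\muov$ with exponent $t=p+s-ps/q$; by Theorem~\ref{Theorem:normscm}(ii) this means $I_d:A^p_\omega\to L^{t}_{\muov}$ is compact.

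The main obstacle I anticipate is the case $q<s$, where $G_{\mu,s}^v$ is only quasi-sublinear and the clean duality/Hölder test-function argument used for $q>s$ is not directly available; there one must instead mimic the $q<s$ argument of Theorem~\ref{th:casepleq} (exponents $\alpha>\beta>1$ with $\beta/\alpha=q/s$, the auxiliary operator $G^v_{\mu,1}(\chi_{S(a)})$, the truncation $\mu_r$, and a self-improvement step), and carefully verify that each estimate produces genuine decay as $|a|\to1^-$ rather than mere boundedness — in particular that the quantities $\sup_{z}\muov(S(z)\cap S(a))/\omega(S(z))$ also decay. A secondary technical point is handling the vertex at the origin: since $\mu(\{0\})=0$ and $\nu(\{0\})=0$ by hypothesis, the contribution of $0$ is harmless, but one should state this to keep the normal-family splitting clean. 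Apart from these points the proof is a routine combination of the already-established norm equivalences with the standard compactness machinery.
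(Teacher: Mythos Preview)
Your overall strategy matches the paper's: test functions $f_{a,p}$ for the direction ``$G_{\mu,s}^v$ compact $\Rightarrow$ $I_d$ compact'', normal families plus the quantitative estimates of Theorem~\ref{th:casepleq} for the converse, with Theorem~\ref{Theorem:normscm}(ii) as the bridge. Two points deserve attention.

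There is a genuine gap in the sentence ``Combining, $G_{\mu,s}^v(g_n)\to0$ in $L^q_\omega$, i.e.\ $G_{\mu,s}^v$ is compact.'' Since $G_{\mu,s}^v$ is not linear, $\|G_{\mu,s}^v(f_{n_k}-f)\|_{L^q_\omega}\to0$ does \emph{not} by itself give $\|G_{\mu,s}^v(f_{n_k})-G_{\mu,s}^v(f)\|_{L^q_\omega}\to0$, which is what compactness demands. The paper closes this with an extra step: from $\|G_{\mu,s}^v(g_{n_k})\|_{L^q_\omega}\to0$ and the quasi-sublinearity of $G_{\mu,s}^v$ (Minkowski when $s\ge1$, the elementary inequality $(a+b)^s\le a^s+b^s$ when $s<1$) one first deduces that the \emph{norms} converge, $\|G_{\mu,s}^v(f_{n_k})\|_{L^q_\omega}\to\|G_{\mu,s}^v(f)\|_{L^q_\omega}$; since also $G_{\mu,s}^v(f_{n_k})(z)\to G_{\mu,s}^v(f)(z)$ pointwise (uniform convergence of $f_{n_k}$ on the relatively compact set $\overline{\Gamma(z)}$), Lemma~\ref{technichal} then upgrades this to $L^q_\omega$-convergence. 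You need to insert this argument. Incidentally, for the converse direction the paper does not split $\mu=\mu_r+(\mu-\mu_r)$ as you do; it simply uses that compactness of $I_d$ lets one choose the subsequence already convergent in $L^{t}_{\muov}$, and then bounds $\|G_{\mu,s}^v(g_{n_k})\|^s_{L^q_\omega}\lesssim\|g_{n_k}\|^s_{L^t_{\muov}}$ directly by rerunning the Fubini/H\"older/$N(f)$ estimates from Theorem~\ref{th:casepleq}. Your splitting route also works, but it too needs the Lemma~\ref{technichal} step at the end.

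Your anticipated obstacle in the case $q<s$ is not actually an obstacle: the quantity $\sup_{z}\muov(S(z)\cap S(a))/\omega(S(z))$ does not need to decay as $|a|\to1^-$, it only needs to stay bounded. Compactness of $G_{\mu,s}^v$ implies boundedness, hence by Theorem~\ref{th:casepleq} the measure $\muov$ is a $(p+s-ps/q)$-Carleson measure for $A^p_\omega$, and this furnishes the required uniform bound on the supremum (after the same self-improvement manipulation as in Theorem~\ref{th:casepleq} when $q>p$). The decay of the left-hand side then comes entirely from the factor $\|G_{\mu,s}^v(f_{a,p})\|^q_{L^q_\omega}\to0$, exactly as in the paper.
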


\begin{proof}
Let first $q>s$. Assume that $G_{\mu,s}^v:A^p_\omega\to L^q_\omega$ is compact.
For each $a\in\D$, let
    $
    f_{a,p}(z)
    =\left(\om(S(a))\right)^{-\frac1p}F_{a,p}(z)
    =\left(\om(S(a))\right)^{-\frac1p}\left(\frac{1-|a|^2}{1-\overline{a}z}\right)^{\frac{\g+1}{p}}
    $.
By Lemma~\ref{le:1} we may choose $\gamma=\gamma(p,q,s,\om)$ sufficiently large such that $\sup_{a\in\D}\|f_{a,p}\|_{A^p_\omega}^p\asymp1$,
$\sup_{a\in\D}\|f_{a,(\frac qs)'}\|_{A^{(\frac qs)'}_\omega}^{(\frac qs)'}\asymp1$ and $f_{a,p}$ converges uniformly to zero on compact subsets of $\D$ as $|a|\to 1^-$.
A standard argument shows that
    \begin{equation}\label{eq:c1}
    \lim_{|a|\to 1^-}\|G_{\mu,s}^v (f_{a,p})\|_{L^{q}_\omega}=0.
    \end{equation}
Fubini's theorem and H\"older's inequality yield
    \begin{equation*}
    \begin{split}
    \frac{\muov(S(a))}{\omega(S(a))^{1+\frac{s}{p}-\frac{s}{q}}}&\asymp
    \frac{1}{\omega(S(a))^{1-\frac{s}{q}}}\int_{S(a)} |f_{a,p}(z)|^s\,d\muov(z)\\
    &\asymp\int_{S(a)}|f_{a,p}(z)|^s\left(\frac{1}{v(T(z))}\int_{T(z)}|f_{a,(\frac qs)'}(\z)|\omega(\z)\,dA(\z)\right)d\mu(z)\\
    &\lesssim\int_\D|f_{a,(\frac qs)'}(\z)|\left(\int_{\Gamma(\z)}|f_{a,p}(z)|^s\frac{d\mu(z)}{v(T(z))}\right)\omega(\z)\,dA(\z)\\
    &\le\|f_{a,(\frac qs)'}\|_{A^{(\frac qs)'}_\omega}\|G_{\mu,s}^v (f_{a,p})\|^s_{L^{q}_\omega}
    \lesssim \|G_{\mu,s}^v (f_{a,p})\|^s_{L^{q}_\omega}.
    \end{split}
    \end{equation*}
This together with \eqref{eq:c1} gives
    $
    \lim_{|a|\to 1^-}\frac{\muov(S(a))}{\omega(S(a))^{1+\frac{s}{p}-\frac{s}{q}}}=0
    $
and hence $I_d:A^p_\omega\to L^{p+s-\frac{ps}{q}}_{\muov}$ is compact by Theorem~\ref{Theorem:normscm}.

Conversely, let $q>s$ and assume that $I_d:\,A^p_\omega\to L^{p+s-\frac{ps}{q}}_{\muov}$  is compact.
Write $t=t(p,q,s)=p+s-\frac{ps}{q}>s$ for short.
Let $\{f_n\}$ be a bounded sequence in $A^p_\om$. Then, we may extract a subsequence
$\{f_{n_k}\}$ that converges in $L^t_{\muov}$ and uniformly on compact subsets to some $f\in A^p_\om$.
Write $g_{n_k}=f_{n_k}-f$. By Fubini's theorem, \cite[Theorem~3]{JAJRTent} and H\"older's inequality,
    \begin{equation*}
    \begin{split}
    \|G_{\mu,s}^v(g_{n_k})\|^s_{L^{q}_\omega}&=\sup_{\|h\|_{L^{(\frac{q}{s})'}_\omega}\le1}\int_\D|h(z)|
    \left(\int_{\Gamma(z)}|g_{n_k}(\z)|^s\frac{d\muov (\z)}{\omega(T(\z))}\right)\omega(z)\,dA(z)
    \lesssim \|g_{n_k}\|^s_{L^{t}_{\muov}}
    \end{split}
    \end{equation*}
and hence $$
\lim_{k\to\infty}\|G_{\mu,s}^v(g_{n_k})\|^s_{L^{q}_\omega}=0.
$$
If $s\ge 1$, two applications of Minkowski's inequality gives
    $$
    \left|\|G_{\mu,s}^v(f_{n_k})\|_{L^{q}_\omega}-\|G_{\mu,s}^v(f)\|_{L^{q}_\omega} \right|\le \|G_{\mu,s}^v(g_{n_k})\|_{L^{q}_\omega}\to 0,\quad k\to \infty.
    $$
Moreover, since $\{f_{n_k}\}$  converges uniformly on compact subsets of $\D$ to $f$, then $\vp_k(z)=\left(\int_{\Gamma(z)}|f_{n_k}(\z)|^s\frac{d\muov (\z)}{\omega(T(\z))}\right)^{1/s}$ converges to $\vp(z)=\left(\int_{\Gamma(z)}|f(\z)|^s\frac{d\muov (\z)}{\omega(T(\z))}\right)^{1/s}$ for
each $z\in\D$. Therefore Lemma~\ref{technichal} yields
    $$
    \lim_{k\to\infty}\|G_{\mu,s}^v(f_{n_k})-G_{\mu,s}^v(f)\|_{L^{q}_\omega}
    =\lim_{k\to\infty}\|\vp_k-\vp\|_{L^{q}_\omega}=0,
    $$
and thus $G_{\mu,s}^v:A^p_\omega\to L^q_\omega$ is compact.

If $0<s\le 1$,
    $$
    \int_{\Gamma(z)}|f_{n_k}(\z)|^s\frac{d\muov (\z)}{\omega(T(\z))}\le \int_{\Gamma(z)}|g_{n_k}(\z)|^s\frac{d\muov (\z)}{\omega(T(\z))}+\int_{\Gamma(z)}|f(\z)|^s\frac{d\muov (\z)}{\omega(T(\z))},
    $$
which together with Minkowski's inequality yields
    $$
    \left|\|G_{\mu,s}^v(f_{n_k})\|^s_{L^{q}_\omega}-\|G_{\mu,s}^v(f)\|^s_{L^{q}_\omega} \right|\le \|G_{\mu,s}^v(g_{n_k})\|^s_{L^{q}_\omega}.
    $$
Now, by arguing as in the previous case we see that $G_{\mu,s}^v:A^p_\omega\to L^q_\omega$ is compact. In view of~\eqref{Eq:case-q=s}, a similar reasoning also applies in the case $q=s$.

It remains to consider the case $q<s$. Assume first that $I_d:A^p_\omega\to L^{p+s-\frac{ps}{q}}_{\muov}$ is compact.
Let $\{f_n\}$ be a bounded sequence in $A^p_\om$. Then we may extract a subsequence
$\{f_{n_k}\}$ that converges on $L^{p+s-\frac{ps}{q}}_{\muov}$ and uniformly on compact subsets of $\D$ to
some $f\in A^p_\om$. Write $g_{n_k}=f_{n_k}-f$, and let $0<x<s$. H\"older's inequality and Fubini's theorem yield
    \begin{equation*}
    \begin{split}
    \|G_{\mu,s}^v(g_{n_k})\|^q_{L^{q}_\omega}
    &\le\int_\D N(g_{n_k})(z)^{\frac{qx}{s}}\left(\int_{\Gamma(z)}|g_{n_k}(\z)|^{s-x}\frac{d\muov(\z)}{\omega(T(\z))}\right)^\frac{q}{s}\omega(z)\,dA(z)\\
    & \lesssim\|N(g_{n_k})\|^{\frac{qx}{s}}_{L_\omega^{\frac{qx}{s-q}}}\|g_{n_k}\|^{\frac{q(s-x)}{s}}_{L_{\muov}^{(s-x)}}.
    \end{split}
    \end{equation*}
Take $x=\frac{p(s-q)}{q}<s$ so that $s-x=s+p-\frac{ps}{q}$. Then the estimates above together with \cite[Lemma 4.4]{PelRat}  give
    \begin{equation*}
    \begin{split}
    \|G_{\mu,s}^v(g_{n_k})\|^q_{L^{q}_\omega}
    \lesssim\|g_{n_k}\|^{\frac{qx}{s}}_{A_\omega^{p}}\|g_{n_k}\|^{\frac{q(s-x)}{s}}_{L_{\muov}^{s+p-\frac{ps}{q}}}
    \lesssim \|g_{n_k}\|^{\frac{q(s-x)}{s}}_{L_{\muov}^{s+p-\frac{ps}{q}}},
    \end{split}
    \end{equation*}
and hence
    $$
    \lim_{k\to\infty}\|G_{\mu,s}^v(g_{n_k})\|_{L^{q}_\omega}=0.
    $$
Now, by using Lemma~\ref{technichal} and arguing as in the case $q>s$, we conclude that
$\lim_{k\to\infty}\|G_{\mu,s}^v(f_{n_k})-G_{\mu,s}^v(f)\|_{L^{q}_\omega}
= 0$, that is, $G_{\mu,s}^v:A^p_\omega\to L^q_\omega$ is compact.

Conversely, let $q<s$ and assume that $G_{\mu,s}^v:A^p_\omega\to L^q_\omega$ is bounded. Choose $\alpha>\beta>1$ such that $\frac{\beta}{\alpha}=\frac{q}{s}$. By arguing as in \eqref{1-s} and \eqref{1-s'}, we get
    \begin{equation*}
    \begin{split}
    \frac{\muov(S(a))}{\om\left(S(a)\right)^{s/p}}
    &\lesssim\|G_{\mu,s}^v(f_{a,p})\|^{\frac{q}{\beta}}_{L^{q}_\omega}
    \|G_{\mu,1}^v(f^s_{a,p}(z)\chi_{S(a)})\|^{\frac{1}{\alpha'}}_{L^{\frac{\beta'}{\alpha'}}_\omega}
    \end{split}
    \end{equation*}
and
    \begin{equation*}
    \begin{split}
    \|G_{\mu,1}^v(f^s_{a,p}(z)\chi_{S(a)})\|_{L^{\frac{\beta'}{\alpha'}}_\omega}
    &\lesssim\frac{\muov(S(a))^{\frac{\alpha'}{\beta'}}}{\om\left(S(a)\right)^{s/p}} \left(\sup_{z\in \D} \frac{\muov(S(z)\cap S(a))}{\omega(S(z))} \right)^{1-\frac{\alpha'}{\beta'}},\quad a\in\D,
    \end{split}
    \end{equation*}
respectively. These estimates yield
    \begin{equation*}
    \begin{split}
    \frac{\muov(S(a))}{\om\left(S(a)\right)^{s/p}}
    \lesssim \|G_{\mu,s}^v(f_{a,p})\|^{\frac{q}{\beta}}_{L^{q}_\omega}\frac{\muov(S(a))^{\frac{1}{\beta'}}}{\om\left(S(a)\right)^{\frac{s}{p\alpha'}}} \left(\sup_{z\in \D} \frac{\muov(S(z)\cap S(a))}{\omega(S(z))} \right)^{\frac{1}{\alpha'}-\frac{1}{\beta'}},\quad a\in\D,
    \end{split}
    \end{equation*}
and thus
    \begin{equation}
    \begin{split}\label{eq:1-sc}
    \frac{\muov (S(a))}{\omega(S(a))^{\frac{q}{p}}}
    &\lesssim\|G_{\mu,s}^v(f_{a,p})\|^{q}_{L^{q}_\omega}
    \left(\sup_{z\in \D} \frac{\muov(S(z)\cap S(a))}{\omega(S(z))} \right)^{\frac{\beta}{\alpha'}-\frac{\beta}{\beta'}},
    \quad a\in\D.
    \end{split}
    \end{equation}
If $q=p$, we may use $\lim_{|a|\to 1^-}\|G_{\mu,s}^v (f_{a,p})\|_{L^{p}_\omega}=0$ and Theorem~\ref{th:casepleq} to deduce that the right-hand side of \eqref{eq:1-sc} tends to zero as $a$ approaches the boundary. Therefore $I_d:A^p_\om\to L^p_{\muov}$ is compact by Theorem~\ref{Theorem:normscm}(ii).

If $q>p$, by using \eqref{eq:1-sc} and arguing as in the corresponding part of the proof of Theorem~\ref{th:casepleq},
we get
    \begin{equation*}
    \begin{split}
    \frac{\muov(S(a))}{\omega(S(a))^{1+\frac{s}{p}-\frac{s}{q}}}
    &\le\|G_{\mu,s}^v(f_{a,p})\|^{q}_{L^{q}_\omega}
    \left(\sup_{b\in\D}\frac{\muov(S(b))}{\omega(S(b))^{1+\frac{s}{p}-\frac{s}{q}}}\right)^{1-\frac{q}{s}},
    \quad a\in\D.
    \end{split}
    \end{equation*}
from which arguments similar to those applied in the previous paragraph show that $I_d:A^p_\om\to L^{p+s-\frac{ps}{q}}_{\muov}$ is compact.
This finishes the proof.
\end{proof}

\begin{theorem}
Let $0<q<p<\infty$ and $0<s<\infty$ such that $1+\frac{s}p-\frac{s}{q}>0$, $\omega\in\DD$ and let $\mu,v$ be  positive Borel measures on $\D$
such that $\mu\left(\{z\in\D: v(T(z))=0\}\right)=0=\mu(\{0\})$. Then the following conditions are equivalent:
    \begin{enumerate}
    \item[\rm(i)] $G_{\mu,s}^v:A^p_\omega\to L^q_\omega$ is compact;
    \item[\rm(ii)] $G_{\mu,s}^v:A^p_\omega\to L^q_\omega$ is bounded;
    \item[\rm(iii)] $I_d: A^p_\om\to L^{p\left(1+\frac{s}p-\frac{s}{q}\right)}_{\muov}$ is compact;
    \item[\rm(iv)] $I_d: A^p_\om\to L^{p\left(1+\frac{s}p-\frac{s}{q}\right)}_{\muov}$ is bounded.
    \end{enumerate}
\end{theorem}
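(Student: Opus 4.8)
The plan is to verify the easy equivalences among (ii), (iii), (iv) and then prove that (iv) implies (i), the implication (i)$\Rightarrow$(ii) being trivial. The equivalence (ii)$\Leftrightarrow$(iv) is exactly Theorem~\ref{fukifuki}, since $p\bigl(1+\frac{s}{p}-\frac{s}{q}\bigr)=p+s-\frac{ps}{q}$ and the statement that $\muov$ is a $\bigl(p+s-\frac{ps}{q}\bigr)$-Carleson measure for $A^p_\omega$ is, by definition, the boundedness of $I_d:A^p_\omega\to L^{p+s-\frac{ps}{q}}_{\muov}$. For (iii)$\Leftrightarrow$(iv), put $t=p+s-\frac{ps}{q}$ and observe that $q<p$ gives $t-p=s\bigl(1-\frac{p}{q}\bigr)<0$, so the target exponent lies strictly below $p$; Theorem~\ref{Theorem:normscm}(iii), applied with $\muov$ and $t$ in place of $\mu$ and $q$, then yields that $I_d:A^p_\omega\to L^t_{\muov}$ is bounded if and only if it is compact. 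Hence it remains to prove (iv)$\Rightarrow$(i).

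Assume (iv). The one consequence I will use is that $B_\mu^v\in L^{\frac{qp}{s(p-q)}}_\omega$; this is part of Theorem~\ref{fukifuki} (alternatively it follows from Theorem~\ref{Theorem:normscm}(iii)(c), since $\frac{d\muov(\zeta)}{\om(T(\zeta))}=\frac{d\mu(\zeta)}{v(T(\zeta))}$ makes $B^\om_{\muov}=B_\mu^v$, while $\frac{p}{p-t}=\frac{qp}{s(p-q)}$). As $\om$ is a finite measure and $\frac{qp}{s(p-q)}\ge\frac{q}{s}$, this also gives $B_\mu^v\in L^{q/s}_\omega$. Now let $\{f_n\}$ be a bounded sequence in $A^p_\omega$. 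It is uniformly bounded on compact subsets of $\D$, hence a normal family, so some subsequence $\{f_{n_k}\}$ converges uniformly on compact subsets of $\D$ to an $f\in A^p_\omega$; write $g_k=f_{n_k}-f$.

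The core step is to show $\|G_{\mu,s}^v(g_k)\|_{L^q_\omega}\to 0$, for which I would localize at the boundary. For fixed $R\in(0,1)$ split $\Gamma(z)$ into $\Gamma(z)\cap\overline{D(0,R)}$ and $\Gamma(z)\setminus\overline{D(0,R)}$; using $(a+b)^{q/s}\lesssim a^{q/s}+b^{q/s}$ this bounds $\|G_{\mu,s}^v(g_k)\|_{L^q_\omega}^q$ by the sum of two pieces. In the first piece only points with $|\zeta|\le R$ are seen, so it is at most $\bigl(\sup_{|\zeta|\le R}|g_k(\zeta)|\bigr)^q\,\|B_\mu^v\|_{L^{q/s}_\omega}^{q/s}$, which for each fixed $R$ tends to $0$ as $k\to\infty$ by the uniform convergence on $\overline{D(0,R)}$. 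In the second piece I estimate $|g_k(\zeta)|^s\le N(g_k)(z)^s$ for $\zeta\in\Gamma(z)$, then use H\"older's inequality with exponents $\frac{p}{q}>1$ and $\frac{p}{p-q}$ together with $\|N(g_k)\|_{L^p_\omega}\asymp\|g_k\|_{A^p_\omega}$ (\cite[Lemma~4.4]{PelRat}); since $\frac{q}{s}\cdot\frac{p}{p-q}=\frac{qp}{s(p-q)}$, the piece is $\lesssim\|g_k\|_{A^p_\omega}^q\,\|B_{\mu,R}^v\|_{L^{\frac{qp}{s(p-q)}}_\omega}^{q/s}$, where $B_{\mu,R}^v(z)=\int_{\Gamma(z)\setminus\overline{D(0,R)}}\frac{d\mu(\zeta)}{v(T(\zeta))}$. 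Because $B_{\mu,R}^v\le B_\mu^v\in L^{\frac{qp}{s(p-q)}}_\omega$ and $B_{\mu,R}^v(z)\downarrow 0$ for a.e.\ $z$ as $R\to 1^-$ (this is where $\mu(\{0\})=0$ is used), dominated convergence gives $\|B_{\mu,R}^v\|_{L^{\frac{qp}{s(p-q)}}_\omega}\to 0$. As $\{g_k\}$ is bounded in $A^p_\omega$, one first chooses $R$ so close to $1$ that the second piece is uniformly small in $k$, and then lets $k\to\infty$ in the first; this gives $\|G_{\mu,s}^v(g_k)\|_{L^q_\omega}\to 0$.

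Finally I would upgrade this to $\|G_{\mu,s}^v(f_{n_k})-G_{\mu,s}^v(f)\|_{L^q_\omega}\to 0$ just as in the proof of Theorem~\ref{th:c1}: if $s\ge 1$ the sublinearity of $G_{\mu,s}^v$ gives $\bigl|G_{\mu,s}^v(f_{n_k})-G_{\mu,s}^v(f)\bigr|\le G_{\mu,s}^v(g_k)$ pointwise; if $0<s<1$ the subadditivity of $\bigl(G_{\mu,s}^v(\cdot)\bigr)^s$ gives $\bigl(G_{\mu,s}^v(f_{n_k})\bigr)^s\to\bigl(G_{\mu,s}^v(f)\bigr)^s$ in $L^{q/s}_\omega$, hence $\|G_{\mu,s}^v(f_{n_k})\|_{L^q_\omega}\to\|G_{\mu,s}^v(f)\|_{L^q_\omega}$, while $\Gamma(z)$ being relatively compact in $\D$ and $B_\mu^v(z)<\infty$ a.e.\ give $G_{\mu,s}^v(f_{n_k})(z)\to G_{\mu,s}^v(f)(z)$ a.e., so that Lemma~\ref{technichal} closes the argument. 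The main obstacle, as the above makes clear, is the convergence $\|G_{\mu,s}^v(g_k)\|_{L^q_\omega}\to 0$: when $q<p$ one can no longer route it through the weighted maximal function bound of Theorem~\ref{co:maxbou} as in Theorem~\ref{th:c1} because the relevant H\"older exponents point the wrong way, so the boundary cut-off and the decay $\|B_{\mu,R}^v\|_{L^{\frac{qp}{s(p-q)}}_\omega}\to 0$ must do the work, and one has to take care that $R$ can be fixed uniformly in $k$.
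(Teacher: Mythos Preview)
Your argument is correct and follows essentially the same route as the paper: the equivalences (ii)$\Leftrightarrow$(iii)$\Leftrightarrow$(iv) are obtained from Theorems~\ref{fukifuki} and~\ref{Theorem:normscm}(iii), and (iv)$\Rightarrow$(i) is proved by splitting $\Gamma(z)$ at radius $R$, controlling the boundary piece by $\|N(g_k)\|_{L^p_\omega}^q\,\|B_{\mu,R}^v\|_{L^{qp/s(p-q)}_\omega}^{q/s}$ via H\"older, and then finishing with Lemma~\ref{technichal} as in Theorem~\ref{th:c1}. The only (harmless) differences are that you justify $\|B_{\mu,R}^v\|_{L^{qp/s(p-q)}_\omega}\to 0$ directly by dominated convergence (using $\Gamma(z)\subset D(0,|z|)$) rather than by routing through $\Psi^\omega_\mu$ as the paper does, and for $s\ge1$ you bypass Lemma~\ref{technichal} with the pointwise estimate $|G_{\mu,s}^v(f_{n_k})-G_{\mu,s}^v(f)|\le G_{\mu,s}^v(g_k)$.
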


\begin{proof}
The conditions (ii)--(iv) are equivalent by Theorems~\ref{Theorem:main-s}(i) and~\ref{Theorem:normscm}(iii). To complete the proof, it suffices to show that $G_{\mu,s}^v:A^p_\omega\to L^q_\omega$ is compact if $I_d: A^p_\om\to L^{p\left(1+\frac{s}p-\frac{s}{q}\right)}_{\muov}$ is bounded. To see this, let $\{f_n\}$ be a bounded sequence in $A^p_\om$, let $\{f_{n_k}\}$ be a subsequence that converges uniformly on compact subsets of $\D$ to $f\in A^p_\om$. Write $g_k=f_{n_k}-f$ as before. By using Theorem~\ref{Theorem:normscm} and the last part of the proof of Theorem~\ref{Theorem:normscm}(iii), we deduce
    $$
    \lim_{R\to 1^-} \int_\D \left(\int_{\Gamma(\z)\setminus\overline{D(0,R)}} \frac{d\mu(z)}{\om(T(z))}\right)^{\frac{pq}{s(p-q)}}\,\om(\z)\,dA(\z)=0.
    $$
Therefore, for a fixed $\e>0$, there exists $R_0\in(0,1)$ such that
    $$
    \int_\D \left(\int_{\Gamma(\z)\setminus\overline{D(0,R_0)}} \frac{d\mu(z)}{\om(T(z))}\right)^{\frac{pq}{s(p-q)}}\,\om(\z)\,dA(\z)<\e^\frac{p}{p-q}.
    $$
Choose $k_0\in\N$ such that $|g_k(z)|<\e^{1/q}$ for all $k\ge k_0$ and $z\in\overline{D(0,R_0)}$. Then H\"older's inequality and \cite[Lemma~4.4]{PelRat} give
    \begin{equation*}
    \begin{split}
    \|G_{\mu,s}^v(g_k)\|^q_{L^{q}_\omega}
    &\lesssim  \int_\D\left(\int_{\Gamma(z)\cap\overline{D(0,R_0)}}|g_k(\z)|^s\frac{d\mu(\z)}{v(T(\z))}\right)^\frac{q}{s}\omega(z)\,dA(z)\\
    &\quad+\int_\D\left(\int_{\Gamma(z)\setminus\overline{D(0,R_0)}}|g_k(\z)|^s\frac{d\mu(\z)}{v(T(\z))}\right)^\frac{q}{s}\omega(z)\,dA(z)\\
    &\lesssim \e+
    \int_\D N(g_k)^q(z)\left(\int_{\Gamma(z)\setminus\overline{D(0,R_0)}}\frac{d\mu(\z)}{v(T(\z))}\right)^\frac{q}{s}\omega(z)\,dA(z)\\
    &\le \e+
    \|N(g_k) \|^q_{L^{p}_\omega}\left(\int_\D \left(\int_{\Gamma(z)\setminus\overline{D(0,R_0)}}\frac{d\mu(\z)}{v(T(\z))}\right)^\frac{pq}{(p-q)s}\omega(z)\,dA(z)\right)^\frac{p-q}{p}
    \lesssim \e,
    \end{split}
    \end{equation*}
and consequently, $\lim_{k\to \infty}\|G_{\mu,s}^v(g_k)\|^q_{L^{q}_\omega}=0$. Finally, by using Lemma~\ref{technichal} and arguing as in the proof of Theorem~\ref{th:c1}, we deduce $\lim_{k\to \infty}\|G_{\mu,s}^v(f_k)-G_{\mu,s}^v(f)\|^q_{L^{q}_\omega}=0$, that is, $G_{\mu,s}^v:A^p_\omega\to L^q_\omega$ is compact.
\end{proof}

\section{Applications and further comments}\label{Sec:comments}

\subsection{Area operators in Hardy spaces}\label{Subsection:Area-Hardy}

For $0<s<\infty$, define
    $$
    G_{\mu,s}(f)(z)=\left(\int_{\Gamma(z)}|f(\z)|^s\frac{d\mu (\z)}{1-|\z|}\right)^\frac1s,\quad z\in\T.
    $$
The method of proof of Theorem~\ref{Theorem:main-s}, combined with the results in \cite[Section $7$]{Lu90}
and \cite{PelPAMS14}, can be used to obtain the following result. The details of the proof does not reveal anything new, and are therefore omitted. Here $L^q(\T)$ refers to the classical $L^q$-space on $\T$.

\begin{theorem}\label{Theorem:main-sHp}
Let $0<p,q<\infty$ such that $1+\frac{s}p-\frac{s}{q}>0$, and let $\mu$ be a positive Borel measure on $\D$ such that $\mu(\{0\})=0$. Then $G_{\mu,s}:H^p\to L^q(\T)$ is bounded
 (resp. compact) if and only if $I_d:\,H^p\to L^{p+s-\frac{ps}{q}}_\mu$ is bounded (resp. compact).
Moreover,
    \begin{equation*}
    \begin{split}
    \|G_{\mu,s}\|^s_{H^p\to L^{q}(\T)}
    \asymp\|Id\|^{p+s-\frac{ps}{q}}_{H^p\to L^{p+s-\frac{ps}{q}}_{\mu}}
    \asymp\sup_{a\in\D}\frac{\mu(S(a))}{(1-|a|)^{1+\frac{s}{p}-\frac{s}{q}}},\quad p\le q,
    \end{split}
    \end{equation*}
and
    \begin{equation*}
    \begin{split}
    \|G_{\mu,s}\|^s_{H^p\to L^{q}(\T)}
    \asymp\|Id\|^{p+s-\frac{ps}{q}}_{H^p\to L^{p+s-\frac{ps}{q}}_{\mu}}
    \asymp\|B_\mu\|_{L^{\frac{qp}{s(p-q)}}},\quad q<p,
    \end{split}
    \end{equation*}
where
    $$
    B_\mu(\z)=\int_{\Gamma(\z)}\frac{d\mu(z)}{1-|z|},\quad
    \z\in\T.
    $$
\end{theorem}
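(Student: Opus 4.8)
The plan is to transcribe the proof of Theorem~\ref{Theorem:main-s} to the boundary setting via the dictionary $A^p_\om\leftrightarrow H^p$, $L^q_\om\leftrightarrow L^q(\T)$, $\om(T(\z))\leftrightarrow 1-|\z|$ and $\muov\leftrightarrow\mu$ (there is no auxiliary measure $v$ here, the normalisation $1-|\z|$ being fixed from the outset). Three ingredients used in Section~\ref{Sec:boundedoperators} must be replaced by Hardy-space analogues: (a) the non-tangential maximal function $N(f)(z)=\sup_{\z\in\Gamma(z)}|f(\z)|$, $z\in\T$, which satisfies $\|N(f)\|_{L^p(\T)}\asymp\|f\|_{H^p}$ and takes the place of \cite[Lemma~4.4]{PelRat}; (b) the H\"ormander-type maximal embedding of Theorem~\ref{co:maxbou}, replaced by the $(p,q)$ mapping property of the Hardy--Littlewood maximal operator on $\T$ into $L^q_\mu$ from \cite{PelPAMS14}; and (c), crucially, the refined Carleson-measure description of Theorem~\ref{Theorem:normscm}, replaced by its Hardy counterpart, namely $\|I_d\|^q_{H^p\to L^q_\mu}\asymp\sup_{a\in\D}\mu(S(a))/(1-|a|)^{q/p}$ for $p\le q$ (Carleson--Duren) and $\|I_d\|^q_{H^p\to L^q_\mu}\asymp\|B_\mu\|_{L^{p/(p-q)}(\T)}$ for $q<p$, the latter obtained from \cite[Section~7]{Lu90} together with \cite{PelPAMS14}. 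The test functions $F_{a,p}(z)=\bigl(\frac{1-|a|^2}{1-\overline{a}z}\bigr)^{(\gamma+1)/p}$ with $\gamma$ large, for which $\|F_{a,p}\|^p_{H^p}\asymp 1-|a|$ and $|F_{a,p}|\asymp1$ on $S(a)$, are available just as before.

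For $p\le q$ the argument follows the proof of Theorem~\ref{th:casepleq}. The case $q=s$ is Fubini's theorem: since $|\{z\in\T:\z\in\Gamma(z)\}|\asymp 1-|\z|$, one gets $\|G_{\mu,s}(f)\|^s_{L^s(\T)}\asymp\int_\D|f(\z)|^s\,d\mu(\z)=\|f\|^s_{L^s_\mu}$, reducing matters to the Carleson embedding $H^p\hookrightarrow L^s_\mu$. For $q>s$, sufficiency of the $(p+s-\frac{ps}{q})$-Carleson condition is proved by writing $\|G_{\mu,s}(f)\|^s_{L^q(\T)}$ as a supremum over $\|h\|_{L^{(q/s)'}(\T)}\le1$, using Fubini to pass to $\int_\D|f(\z)|^s\bigl(\frac1{1-|\z|}\int_{T(\z)}|h|\bigr)\,d\mu(\z)$, dominating the average by the Hardy--Littlewood maximal function and invoking the embedding of \cite{PelPAMS14}; necessity uses the test pair $F_{a,p},F_{a,(q/s)'}$ as in the opening display of that proof. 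For $q<s$, sufficiency follows by splitting $|f|^s=|f|^x|f|^{s-x}$ with $x=\frac{p(s-q)}{q}$, extracting $N(f)^{qx/s}$, applying H\"older's inequality and $\|N(f)\|_{L^p(\T)}\asymp\|f\|_{H^p}$; necessity is the $\alpha>\beta>1$ duality computation \eqref{1-s}--\eqref{eq:1-s} (with $\om(T(\cdot))$ replaced by $1-|\cdot|$) applied to the truncations $d\mu_r=\chi_{D(0,r)}\,d\mu$, followed by Fatou's lemma.

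For $q<p$ one follows the proof of Theorem~\ref{fukifuki}. Sufficiency is direct: $|f(\z)|\le N(f)(z)$ for $\z\in\Gamma(z)$ gives $\|G_{\mu,s}(f)\|^q_{L^q(\T)}\le\int_\T N(f)^q(z)\,B_\mu(z)^{q/s}\,|dz|$, whence H\"older's inequality with exponents $\frac{p}{q}$ and $(\frac{p}{q})'$, together with $\|N(f)\|_{L^p(\T)}\asymp\|f\|_{H^p}$, yields $\|G_{\mu,s}(f)\|^q_{L^q(\T)}\lesssim\|f\|^q_{H^p}\|B_\mu\|^{q/s}_{L^{qp/(s(p-q))}(\T)}$. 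Necessity is obtained by running the three sub-cases of Theorem~\ref{fukifuki}: when $q>s$ the split $|f|^s\cdot|f|^{p(1-s/q)}$ plus $N$ shows $I_d:H^p\to L^{p+s-\frac{ps}{q}}_\mu$ is bounded; when $q=s$ it is Fubini; when $q<s$ a H\"older split followed by the $\mu_r$-truncation/Fatou argument does it. The displayed norm equivalences then come from the Hardy analogue of Theorem~\ref{Theorem:normscm}. The compactness statement is handled exactly as in the proof of Theorem~\ref{th:c1} (for $p\le q$) and of the final theorem of Section~4 (for $q<p$): given a bounded sequence in $H^p$, extract a subsequence converging uniformly on compact subsets of $\D$ to some $f\in H^p$; for ``$I_d$ compact $\Rightarrow G_{\mu,s}$ compact'' use the vanishing analogue of the relevant Carleson condition (in the $q<p$ range, the vanishing tail $\lim_{R\to1^-}\int_\T\bigl(\int_{\Gamma(z)\setminus\overline{D(0,R)}}\frac{d\mu(\z)}{1-|\z|}\bigr)^{\kappa}|dz|=0$ with $\kappa=\frac{pq}{s(p-q)}$) together with Lemma~\ref{technichal}, which holds equally on $L^q(\T)$; for the converse test against $f_{a,p}=(1-|a|)^{-1/p}F_{a,p}$, which tends to zero uniformly on compacta and satisfies $\lim_{|a|\to1^-}\|G_{\mu,s}(f_{a,p})\|_{L^q(\T)}=0$.

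The hard part is not the tent-space bookkeeping---once the ingredients of the first paragraph are in place the transcription is essentially mechanical---but the underlying Carleson-measure theory for $H^p$ in the range $q<p$, i.e.\ the sharp two-sided estimate $\|I_d\|^q_{H^p\to L^q_\mu}\asymp\|B_\mu\|_{L^{p/(p-q)}(\T)}$, which is the Hardy-space analogue of Theorem~\ref{Theorem:normscm}(iii); this, however, is already available through \cite[Section~7]{Lu90} and \cite{PelPAMS14}, and with it in hand every remaining step reduces, via Fubini, H\"older, duality and the maximal bounds $\|N(f)\|_{L^p(\T)}\asymp\|f\|_{H^p}$ and \cite{PelPAMS14}, to the computations already carried out in Sections~\ref{Sec:boundedoperators} and~4.
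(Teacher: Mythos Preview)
Your proposal is correct and follows precisely the route indicated by the paper: the authors state only that ``the method of proof of Theorem~\ref{Theorem:main-s}, combined with the results in \cite[Section~7]{Lu90} and \cite{PelPAMS14}, can be used to obtain the following result'' and omit all details, so your transcription via the dictionary $A^p_\om\leftrightarrow H^p$, $\om(T(\z))\leftrightarrow 1-|\z|$, together with the Hardy-space Carleson-measure characterizations from \cite{Lu90,PelPAMS14} and the non-tangential maximal estimate $\|N(f)\|_{L^p(\T)}\asymp\|f\|_{H^p}$, is exactly what they have in mind. You have in fact supplied more detail than the paper itself.
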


In particular, this result proves the conjecture in \cite[p.~365]{GonLouWu2010} in the case $1+\frac{1}p-\frac{1}{q}>0$.

\subsection{Integral operator $T_g$ on Bergman and Hardy spaces}

Each $g\in\H(\D)$ induces the integral operator
    $$
    T_g(f)(z)=\int_0^zg'(\z)f(\z)\,d\z,\quad z\in\D,
    $$
acting on $\H(\D)$. This type of integral operators have been extensively studied
during the last decades and have interesting connections with other areas of mathematical analysis, see \cite{PelRat,PelSum14} and the references therein. In particular, the symbols $g$ for which $T_g$ is bounded or compact from $A^p_\om$ to
$A^q_\om$ can be described in terms of the following spaces of analytic functions when $q\ge p$.

We say that $g\in\H(\D)$ belongs to $\CC^{q,p}(\om^\star)$,
$0<p,q<\infty$, if the measure $|g'(z)|^2\om^\star(z)\,dA(z)$ is a
$q$-Carleson measure for $A^p_\om$. Moreover,
$g\in\CC^{q,p}_0(\om^\star)$ if the identity operator
$I_d:A^p_\omega\to L^q(|g'|^2\omega^\star dA)$ is compact. If
$q\ge p$ and $\om\in\DD$, then Theorem~\ref{Theorem:normscm} shows that
these spaces only depend on the quotient $\frac{q}{p}$.
Consequently, for $q\ge p$ and $\om\in\DD$, we simply write
$\CC^{q/p}(\om^\star)$ instead of $\CC^{q,p}(\om^\star)$. Thus,
if $\alpha\ge 1$ and $\om\in\DD$, then
$\CC^{\alpha}(\om^\star)$ consists of those $g\in\H(\D)$ such
that
    \begin{equation}\label{calpha}
    \|g\|^2_{\CC^{\alpha}(\om^\star)}=|g(0)|^2+\sup_{I\subset\T}\frac{\int_{S(I)}|g'(z)|^2\om^\star(z)\,dA(z)}
    {\left(\om\left(S(I)\right)\right)^{\alpha}}<\infty.\index{$\CC^\a(\om^\star)$}
    \end{equation}
An analogue of this identity is valid for the little space
$\CC^{\alpha}_0(\om^\star)$. We refer to \cite[Chapter~5]{PelRat}
for further information about these spaces.

\begin{theorem}\label{Theorem:T_g-Bergman}
Let $0<p,q<\infty$ such that $q>\frac{2p}{2+p}$ and $\om\in\DD$. Let $g\in \H(\D)$ and denote $d\mu_g(z)=|g'(z)|^2\om^\star(z)\,dA(z)$. Then $T_g:A^p_\om\to A^q_\om$ is bounded (resp. compact) if and only if $I_d:A^p_\omega\to L^{p+2-\frac{2p}{q}}_{\mu_g}$ is bounded (resp. compact).
\end{theorem}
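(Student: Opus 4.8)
The plan is to reduce the statement about the integral operator $T_g$ to the embedding $A^p_\om\hookrightarrow T^q_2(\nu,\om)$ by means of the square function description of the Bergman norm, and then invoke the already-established Theorems~\ref{Theorem1-intro}--\ref{Theorem:main-s}. First I would recall from \cite[Theorem~4.2]{PelRat} the equivalent norm
    \begin{equation*}
    \|h\|_{A^q_\om}^q\asymp\int_\D\left(\int_{\Gamma(\z)}|h'(z)|^2\left(1-\left|\tfrac z\z\right|\right)^{0}\,dA(z)\right)^{q/2}\om(\z)\,dA(\z)+|h(0)|^q,
    \end{equation*}
which in our notation is precisely $\|h\|_{A^q_\om}^q\asymp\|h'\|_{T^q_2(A,\om)}^q+|h(0)|^q$, where $A$ denotes area measure (here the exponent $2\a-2$ reduces to $0$ with $\a=1$). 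Since $(T_gf)'=g'f$ and $(T_gf)(0)=0$, this gives
    \begin{equation*}
    \|T_gf\|_{A^q_\om}^q\asymp\|g'f\|_{T^q_2(A,\om)}^q=\|f\|_{T^q_2(|g'|^2\,dA,\om)}^q,
    \end{equation*}
so $T_g:A^p_\om\to A^q_\om$ is bounded (resp. compact) if and only if $I_d:A^p_\om\to T^q_2(\nu_g,\om)$ is bounded (resp. compact), where $d\nu_g(z)=|g'(z)|^2\,dA(z)$.

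Next I would apply Theorem~\ref{Theorem1-intro} with $s=2$ and $\nu=\nu_g$. The hypothesis $1+\frac sp-\frac sq>0$ becomes $1+\frac2p-\frac2q>0$, i.e. $q>\frac{2p}{2+p}$, which is exactly the assumption in the statement; and the technical requirement $\nu_g(\{0\})=0$ holds automatically since $\nu_g$ is absolutely continuous. Theorem~\ref{Theorem1-intro} then tells us that $I_d:A^p_\om\to T^q_2(\nu_g,\om)$ is bounded (resp. compact) if and only if $(\nu_g)_\om$, defined by $d(\nu_g)_\om(z)=\om(T(z))|g'(z)|^2\,dA(z)$, is a $(p+2-\frac{2p}{q})$-Carleson measure for $A^p_\om$ (resp. $I_d:A^p_\om\to L^{p+2-2p/q}_{(\nu_g)_\om}$ is compact). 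The final point is to identify $(\nu_g)_\om$ with $\mu_g$: by Lemma~\ref{le:1}(v) we have $\om(T(z))\asymp\om^\star(z)$ as $|z|\to1^-$, and on any compact subset of $\D$ both $\om(T(z))$ and $\om^\star(z)$ are bounded above and below by positive constants; hence $\om(T(z))\asymp\om^\star(z)$ uniformly on $\D$, so $d(\nu_g)_\om(z)\asymp|g'(z)|^2\om^\star(z)\,dA(z)=d\mu_g(z)$. Being a $t$-Carleson measure for $A^p_\om$ (resp. inducing a compact embedding) is unaffected by replacing the measure by a comparable one, which yields the claim.

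The only genuine subtlety—and the step I expect to require the most care—is the justification that $\om(T(z))\asymp\om^\star(z)$ holds on all of $\D$ rather than just near the boundary; Lemma~\ref{le:1}(v) as quoted is an asymptotic statement, and one must check that neither quantity degenerates for $z$ in a fixed compact set. Since $\om\in\DD$ is a radial weight that is not identically zero, $\widehat\om$ is positive and decreasing, so $\om(T(z))=\int_{T(z)}\om\,dA>0$ for every $z\in\D\setminus\{0\}$ and is continuous in $z$; likewise $\om^\star(z)=\int_{|z|}^1\om(s)\log\frac s{|z|}\,s\,ds>0$ is continuous on $\D\setminus\{0\}$, so both are bounded between positive constants on $\{r\le|z|\le R\}$ for any $0<r<R<1$. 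Combined with the boundary asymptotics this gives the global comparability (the behaviour near $z=0$ is irrelevant since $\mu_g(\{0\})=0$ and the Carleson condition only sees $S(a)$ with $|a|$ close to $1$, or equivalently one absorbs the origin into the constants as in the convention $\om(T(0))=\lim_{r\to0^+}\om(T(r))$). Everything else is a direct citation, so once this comparability is in place the proof is essentially a two-line chain of equivalences.
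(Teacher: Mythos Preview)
Your proof is correct and follows essentially the same route as the paper: both reduce $T_g$ to a tent-space/area-operator via the square-function norm from \cite[Theorem~4.2]{PelRat} and then invoke the main embedding theorem (you cite Theorem~\ref{Theorem1-intro}, the paper cites the equivalent Theorem~\ref{Theorem:main-s}). The only cosmetic difference is where the comparability $\om(T(z))\asymp\om^\star(z)$ from Lemma~\ref{le:1}(v) is used---the paper hides it in the first step by working directly with $G^\om_{\mu_g,2}$ (so that $\muov=\mu_g$ exactly), whereas you make the first step exact with $\nu_g=|g'|^2\,dA$ and invoke the comparability at the end to pass from $(\nu_g)_\om$ to $\mu_g$; your treatment of this point is in fact more explicit than the paper's.
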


\begin{proof}
By \cite[Theorem~4.2]{PelRat}, $T_g:A^p_\om\to L^q_\om$ is bounded  (resp. compact) if and only if $G_{\mu_g,2}^\om:A^p_\om\to L^q_\om$ is bounded (resp. compact), and
    $
    \|T_g\|_{A^p_\om\to L^q_\om}\asymp\|G_{\mu_g,2}^\om\|_{A^p_\om\to L^q_\om}
    $.
Theorem~\ref{Theorem:main-s}
implies
    $$
    \|G_{\mu_g,2}^\om\|^2_{A^p_\om\to L^q_\om}
    \asymp\|I_d\|^{p+2-\frac{2p}{q}}_{A^p_\omega\to L^{p+2-\frac{2p}{q}}_{\mu_g}},
    $$
and this finishes the proof.
\end{proof}

It is worth mentioning that Theorem~\ref{Theorem:normscm} yields
    $$
    \|T_g\|_{A^p_\om\to L^q_\om}^2
    \asymp\sup_{a\in\D}\frac{\mu_g(S(a))}{\om(S(a))^{2(\frac1p-\frac1q)+1}},\quad q\ge p
    $$
Thus $T_g:A^p_\om\to L^q_\om$ is bounded if and only if $g\in\mathcal{C}^{2(\frac1p-\frac1q)+1}(\om^\star)$. If $p>q$, Theorem~\ref{Theorem:normscm} also gives
    $$
    \|T_g\|_{A^p_\om\to L^q_\om}^2
    \asymp\int_\D\left(\int_{\Gamma(\z)}|g'(z)|^2\,dA(z)\right)^\frac{qp}{2(p-q)}\om(\z)\,dA(\z),
    $$
and thus $T_g:A^p_\om\to L^q_\om$ is bounded if and only if $g\in A^\frac{qp}{p-q}_\om$ by \cite[Theorem~4.2]{PelRat}.

Consequently, whenever $q>\frac{2p}{2+p}$
Theorem~\ref{Theorem:T_g-Bergman} improves \cite[Theorem~4.1]{PelRat} because the hypothesis on $\om$ are stronger in the original result.
In particular, if $q<p$, the weight $\om$ is assumed to be continuous and strictly positive with the local regularity
    $$
    \om(t)\asymp\om(r),\quad 1-t\asymp1-r.
    $$
This hypothesis allows one to use the strong factorization $A^p_\om=A^{p_1}_\om\cdot A^{p_2}_\om$, $p^{-1}=p_1^{-1}+p_2^{-1}$,
\cite[Theorem~3.1]{PelRat}, which is a principal ingredient in the proof of \cite[Theorem~4.1]{PelRat}.

However, the defect of Theorem~\ref{Theorem:T_g-Bergman} is the extra hypothesis $q>\frac{2p}{2+p}$ which is a restriction only in the case $p>q$. This condition is inherited from Theorem~\ref{Theorem:main-s} and appears there because Carleson measures are finite measures. This is not true in general for~$\mu_g$ when $g\in A^\frac{qp}{p-q}_\om$ and $q<\frac{2p}{2+p}$.
The case of compact operators can be analyzed in the same way.

If $q>\frac{2p}{2+p}$  one may characterize bounded and compact operators $T_g:H^p\to H^q$ by using Section~\ref{Subsection:Area-Hardy}. In order to avoid unnecessary repetition, we omit the details.


\begin{thebibliography}{99}
\bibitem{CascanteOrtega2003}    C.~Cascante and J.~Ortega,
                                Imbedding potentials in tent spaces,
                                J. Funct. Anal. 198 (2003), no. 1, 106--141.

\bibitem{Cohn}                  W.~S.~Cohn,
                                Generalized area operators on Hardy spaces,
                                J. Math. Anal. Appl. 216 (1997), no. 1, 112--121.

\bibitem{CohnVerbitsky}         W. S. Cohn and I. E. Verbitsky, Factorization of tent spaces and Hankel operators,
                                J. Funct. Anal. 175 (2000), no. 2, 308--329.

\bibitem{CMS}           R.~R.~Coifman, Y.~Meyer and E.~M.~Stein,
                        Some new functions spaces and their applications to Harmonic Analysis,
                        J. Funct. Anal. 62  (1985), no. 3, 304--335.

\bibitem{DurenHp}       P.~Duren, Theory of $H^p$ Spaces, Academic Press, New York-London 2000.


\bibitem{FefSt}          C.~Fefferman and E.~M.~Stein, $H^{p}$ spaces of several
                        variables, Acta Math. 129 (1972), no. 3-4, 137--193.


\bibitem{Garnett1981}   J.~Garnett, Bounded analytic functions, Academic Press, New York, 1981.

\bibitem{GonLouWu2010}  M.~Gong, Z.~Lou and Z.~Wu,
                        Area operators from $H^p$ spaces to $L^q$ spaces,
                        Sci. China Math. 53 (2010), no. 2, 357--366.


\bibitem{Lu90} D.~H.~Luecking, Embedding derivatives of Hardy spaces into Lebesgue spaces,
                              Proc. London Math. Soc. 63 (1991), no.~3, 595--619.



\bibitem{PelPAMS14}     J.A. Pel\'aez, Compact embedding derivatives of Hardy spaces into Lebesgue spaces, to appear in Proc. Amer. Math.
                        Soc.  http://arxiv.org/abs/1502.05527

\bibitem{PelSum14}      J.A. Pel\'aez, Small weighted Bergman spaces, Proceedings of the
                        Summer School \lq\lq Complex and harmonic analysis and related topics\rq\rq\,
                        Mekrij\"arvi, June (2014).

\bibitem{PelRat}        J.~A.~ Pel\'aez and J. R\"atty\"a,
                        Weighted Bergman spaces induced by rapidly increasing weights,
                        Mem.~Amer.~Math.~Soc. 227 (2014), no.~1066.



\bibitem{JAJRTent}		J.A. Pel\'aez and J.R\"atty\"a,
                        Embedding theorems for Bergman spaces via harmonic analysis,
                        Math. Ann. (to appear), DOI 10.1007/s00208-014-1108-5

\bibitem{PRAntequera}   J.~A.~ Pel\'aez and J. R\"atty\"a,
                        On the boundedness of Bergman projection, to appear, VI CIDAMA 2014,
                        http://arxiv.org/abs/1501.03957



%\bibitem{Xiao07}        J.~Xiao, The $Q_p$ Carleson problem, Adv. Math. 217 (2008), 2075--2088.
%\bibitem{Wu2006}        Z.~Wu, Area operator on Bergman spaces,
%                        Sci. China Ser. A 49 (2006), no.~7, 987--1008.


%\bibitem{Wu2011}        Z.~Wu, Volterra operator, area integral and Carleson measures,
%                        Sci. China Math. 54 (2011), no. 11, 2487--2500.



\end{thebibliography}
\end{document}